\newtheorem{theorem}{Theorem}[section]
\newtheorem{lemma}[theorem]{Lemma}
\newtheorem{proposition}[theorem]{Proposition}
\newtheorem{corollary}[theorem]{Corollary}
\theoremstyle{definition}
\newtheorem{definition}[theorem]{Definition}
\newtheorem{example}[theorem]{Example}
\newtheorem{remark}[theorem]{Remark}
\numberwithin{equation}{section}
\newcommand{\mc}{\mathcal}
\newcommand{\mb}{\mathbb}
\newcommand{\XX}{\mathbb X}
\newcommand{\mr}{\mathscr}
\newcommand{\mf}{\mathfrak}
\newcommand{\xra}{\xrightarrow}
\newcommand{\ra}{\rightarrow}
\newcommand{\rra}{\rightrightarrows}
\def\og{\leavevmode\raise.3ex\hbox{$\scriptscriptstyle\langle\!\langle$~}}
\def\fg{\leavevmode\raise.3ex\hbox{~$\!\scriptscriptstyle\,\rangle\!\rangle$}}
\begin{document}
	
	\title[Connections on Lie groupoids and Chern-Weil theory]{Connections on Lie groupoids and Chern-Weil theory}
	
	\author[I. Biswas]{Indranil Biswas}
	
\address{Department of Mathematics, Shiv Nadar University, NH91, Tehsil Dadri,
Greater Noida, Uttar Pradesh 201314, India}
\email{indranil.biswas@snu.edu.in, indranil29@gmail.com}
	
	\author[S. Chatterjee]{Saikat Chatterjee}
	
	\address{School of Mathematics,
		Indian Institute of Science Education and Research--Thiruvananthapuram,
		Maruthamala P.O., Vithura, Kerala 695551, India}
	\email{saikat.chat01@gmail.com}
	
	\author[P. Koushik]{Praphulla Koushik}
	
	\address{School of Mathematics,
		Indian Institute of Science Education and Research--Pune,
		Dr. Homi Bhabha Road, Pashan, Pune, 411008, India}
	\email{praphullakoushik16@gmail.com}
	
	\author[F. Neumann]{Frank Neumann}
	
	\address{Dipartimento di Matematica `Felice Casorati', Universit\`a di Pavia, Via Ferrata, 5, 27100 Pavia, Italy}
	\email{frank.neumann@unipv.it}
	
	\subjclass[2010]{Primary 53C08, Secondary 22A22, 58H05, 53D50}
	
	\keywords{principal bundles, Lie groupoids, differentiable stacks, connections, Chern-Weil theory}
	
	\date{}
	
	\begin{abstract}
		Let $\mb{X}\,=\,[X_1\rra X_0]$ be a Lie groupoid equipped with a connection, given by a smooth distribution
		$\mc{H}\,\subset \,T X_1$ transversal to the fibers of the source map. Under the assumption that the
		distribution $\mc{H}$ is integrable, we define a version of de Rham cohomology for
		the pair $(\mb{X},\, \mc{H})$, and we study connections on principal $G$-bundles
		over $(\mb{X},\, \mc{H})$ in terms of the associated Atiyah sequence of vector bundles. We also discuss associated 
		constructions for differentiable stacks. Finally, we develop the corresponding Chern-Weil theory and describe 
		characteristic classes of principal $G$-bundles over a pair $(\mb{X},\, \mc{H})$.
	\end{abstract}
	\maketitle
	
	\tableofcontents
	
	\section{Introduction} 
	
	The geometry of principal bundles over Lie groupoids and its applications in geometry and physics have been a 
	very active areas of research in recent years. In particular, the concepts of a connection on Lie groupoids and 
	connections on principal bundles over Lie groupoids as well as its geometric, analytic, and algebraic properties,
	have been discussed by several authors in the
	process of extending concepts of differential geometry from smooth manifolds to Lie 
	groupoids \cite{{MR2270285},{MR2817778},{MR2183389}, {CM}, {MR3521476}, {MR2238946},{Sa}, {DE}}. An important 
	ingredient of any such framework is the construction of the associated Chern-Weil theory and a theory of 
	characteristic classes via de Rham cohomology. For example, Laurent-Gengoux, Tu and Xu \cite{{MR2270285},{CM}} 
	study Chern-Weil theory for principal bundles on Lie groupoids via de Rham cohomology defined by using simplicial 
	manifolds associated to the Lie groupoid nerves (compare also \cite{{D2}, {FN}}). Behrend in \cite{MR2183389} 
	investigates flat connections on Lie groupoids via the theory of integrable distributions and cofoliations and 
	studies the associated de Rham cohomology. Tang in \cite{MR2238946} defines in a similar fashion flat connections 
	for Lie groupoids, which he called \'etalifications and investigates its applications in symplectic and Poisson 
	geometry and for deformation quantization (see also \cite{Ha}). In \cite{MR2817778}, Behrend and Xu define 
	connections and curvature and the associated Chern-Weil theory and theory of characteristic classes in the general 
	setting of groupoid $S^1$-central extensions. More recently, Arias Abad and Crainic \cite{AC} introduced more general 
	and flexible Ehresmann connections for any Lie groupoids. Trentinaglia \cite{Tren} investigates and describes the 
	space of Cartan or multiplicative connections on proper Lie groupoids. Furthermore, Herrera and Ortiz \cite{HO} are 
	currently developing aspects of the geometry of principal 2-bundles over Lie groupoids involving Atiyah 
	LA-groupoids. Furthermore, connections and ``higher gauge transformations'' (see \cite{MP1, MP2, MP3, Ma-Mi, BCKMM} and the references 
	therein for recent developments in higher gauge theory) have been studied in terms of Atiyah sequences of
	principal 2-bundles over Lie groupoids in \cite{CCK}. 
	For a general discussion and recent results on the geometry of Lie groupoids, we refer to 
	\cite{{MDH}, {MM},{MR2157566}, {MR896907}}.
	
	Another important reason for the interest in the geometry of Lie groupoids is its association with the geometry 
	of differentiable stacks. Differentiable stacks have presentations by Lie groupoids, in fact, they are basically 
	Morita equivalence classes of Lie groupoids (see for example \cite{{MR2206877}, {MR2493616}, {DM}, {CK}, {DC}, 
		{G}}). Many concepts from the geometry of Lie groupoids can be extended to differentiable stacks via Morita 
	invariance as long as they respect the multiplicative groupoid structure.
	
	The objective of this article is to provide a new and more flexible approach for connections on principal bundles 
	over Lie groupoids (with integrable connections) in terms of Atiyah sequences of vector bundles associated to 
	transversal tangential distribution. A main application is the development of the associated Chern-Weil theory 
	and a theory of characteristic classes for such principal Lie groupoid bundles. Our approach is directly inspired 
	by the classical work of Atiyah \cite{At} about connections on fiber bundles in complex geometry. In the course 
	of this article, the necessary framework for de Rham cohomology on a Lie groupoid with integrable connection is 
	developed and studied. More precisely, given a Lie groupoid $\mb{X}\,=\,[X_1\rra X_0]$ equipped with an 
	integrable connection, defined by a distribution $\mc{H}\,\subset \,T X_1$ which is transversal to the fibers of 
	the source map, we define a new version of de Rham cohomology for such a pair $(\mb{X},\, \mc{H})$ and study 
	general connections on principal $G$-bundles over $(\mb{X},\, \mc{H})$ in terms of the associated Atiyah sequence 
	of vector bundles. This approach allows then for a natural way to develop the corresponding Chern-Weil theory and 
	construction of characteristic classes for principal $G$-bundles over a given pair $(\mb{X},\, \mc{H})$. We will 
	also discuss extensions of our constructions respecting the multiplicative structures to differentiable stacks, 
	namely to the special class of Deligne-Mumford stacks, which are presented by \'etale Lie groupoids. \'Etale Lie 
	groupoids always admit a connection. Special cases of Deligne-Mumford stacks also include orbifolds and foliated 
	spaces and have many applications in geometry and physics. We are currently exploring extensions of our constructions to more general differentiable stacks in related
	work \cite{BCKN1} (compare also with \cite{MR3150770}). Our approach presented here is also closely related to that of 
	Behrend on cofoliations for differentiable stacks \cite{MR2183389}.
	
	The main objective of this article is to study connections in terms of splittings of the Atiyah sequence and cohomologically via the corresponding Chern-Weil theory. In order to do so, we introduce connections on Lie groupoids and develop the corresponding de Rham cohomology. While \cite{MR2270285} discusses the Chern-Weil theory and de Rham cohomology for principal bundles over Lie groupoids, their approach uses simplicial nerves of Lie groupoid, without involvement of any given Lie groupoid connections. In comparison, in our article a given Lie groupoid connection is a necessity to study the Atiyah sequence for a principal bundle over a Lie groupoid. For this, see in particular the discussion around Equation \ref{eq:totcom}, where we distinguish the two approaches with an illustrating example. 
	Nonetheless, a simplicial and stacky approach can be adopted as well in our framework, as explained in Subsection \ref{SS:Comparisonsimplicial}. However, the observation is cursory and we have not pursued this approach here in more detail. On the other hand, Behrend \cite{MR2183389} works with a given connection on a Lie groupoid, but their definition is slightly more strict than ours, and uses a particular de Rham cohomology theory associated to a flat Lie groupoid connection. The main purpose of their paper is to study the cofoliations of stacks with respect to flat Lie groupoid connections. 
	
	\noindent{\bf Outline and organization of the article.} In the first section (Section 
	\ref{Section:PrincipalbundleoverLiegroupoidStack}) we recall standard notions and constructions, such as the 
	definitions of principal $G$-bundles and vector bundles over a Lie groupoid $\mb{X}\,=\,[X_1\rra X_0]$ and set up our 
	notations. For a Lie group $G$, a principal $G$-bundle $(E_G\ra X_0,\, \mb{X})$ over a Lie groupoid 
	$\mb{X}$ is a $G$-bundle $E_G\,\longrightarrow\, X_0$ with an action (compatible with the action of $G$ on $E_G$) of 
	$\mb{X}$ on $E_G$ (Definition \ref{Definition:principalbundleoverLiegroupoid}). Likewise, a vector bundle over 
	$\mb{X}$ is a vector bundle $E\,\longrightarrow\, X_0$ with an action of $\mb{X}$ on $E$ inducing a linear map 
	between fibers. In the following section (Section \ref{Section:deRhamcohomologyofLiegroupoid}) we introduce the 
	notion of a connection on a Lie groupoid $\mb{X}=[X_1\rra X_0]$ as a smooth distribution $\mc{H}\,\subset\, T X_1$ 
	complementing the kernel of the differential $ds$ for the source map $s$. A connection is called integrable or flat 
	if the corresponding distribution $\mc{H}$ is integrable. This definition was originally introduced in 
	\cite{MR3150770} (compare also \cite{BCKN1}). Assuming our Lie groupoid admits a connection, then a differential 
	form on $X_0$ is said to be a differential form on the Lie groupoid $\mb{X}$ if it satisfies certain compatibility 
	conditions with respect to the source and target map and the connection on the Lie groupoid (see Definition 
	\ref{Definition:connectiononLiegroupoid}). We show that, for an integrable connection, the exterior derivative of a 
	differential form on $\mb{X}$ is well defined. In turn, we obtain the graded de Rham cohomology algebra $H_{\rm 
		dR}^*({\mb{X},\, \mc{H}})$ of the pair $(\mb{X},\, \mc{H})$. It should be noted that the integrability condition on 
	$\mc{H}$ will be crucial here. In addition, we give a brief outline of the construction of a simplicial version of 
	de Rham cohomology for a Lie groupoid with integrable connection and how to extend our constructions to Deligne-Mumford 
	stacks. Given a principal $G$-bundle $(E_G\ra X_0,\, \mb{X})$ over a Lie groupoid
	$\mb{X}$, we 
	obtain the associated Atiyah sequence
	$$
	0\,\longrightarrow\, (E_G\times \mf{g})/G\,\longrightarrow\, (TE_G)/G\,\longrightarrow\, TX_0\,\longrightarrow\, 0
	$$
	of vector bundles over the manifold $X_0$ associated to the principal bundle $E_G\,\longrightarrow\, X_0$. 
	In Subsection \ref{SS:Comparisonsimplicial} we will compare our framework with the ones developed by 
	Behrend \cite{BehCoh, MR2183389} and Behrend-Xu \cite{MR2817778}. We will also give a brief comparison between our approach here with that of Laurent-Gengoux, Tu and Xu \cite{MR2270285}.
	In Section~\ref{Section:ConnectiononPrincipalbundleoverLiegroupoid} we prove a key result, namely that for a Lie 
	groupoid $\mb{X}$ with a connection $\mc{H}$ one can define actions of $\mb{X}$ on $(E_G\times \mf{g})/G, (TE_G)/G$ 
	and $TX_0$, turning the Atiyah sequence into a sequence of vector bundles over the Lie groupoid $\mb{X}
	\,=\,[X_1\rra 
	X_0]$. A connection on $(E_G\ra X_0,\, \mb{X})$ is then defined as the splitting of this sequence of vector 
	bundles over the Lie groupoid $\mb{X}$. In the following sections 
	(Sections~\ref{Section:DifferentialformassociatedtoConnections} and 
	\ref{Section:CWmapforprincipalbundleoverLiegroupoid}) we assume connections on a Lie groupoid exist and to be 
	integrable. In Section~\ref{Section:DifferentialformassociatedtoConnections} we characterize connections on a 
	principal $G$-bundle $(E_G\ra X_0,\,\mb{X})$ over a given Lie groupoid $\mb{X}$ in terms of 
	$\mf{g}$-valued differential forms on the Lie groupoid. Theorem~\ref{f(K_D)isadifferentialform} in 
	Section~\ref{Section:CWmapforprincipalbundleoverLiegroupoid} presents the main result of this article, namely the 
	existence of a well defined Chern-Weil map for a principal bundle $(E_G\ra X_0,\, \mb{X})$ over a Lie groupoid 
	$\mb{X}$ with integrable connection $\mc{H}$ and its independence from the choice of a connection on $(E_G\ra X_0, 
	\,\mb{X})$. Finally, we also describe the associated theory of characteristic classes within this framework. 
	
	It is worthwhile to further explore the relationships between our approach and the seemingly different approaches in the literature towards a general Chern-Weil theory for Lie groupoids and differentiable stacks with a view towards applications in geometry and physics, like the construction of secondary characteristic classes and multiplicative $K$-theory.	
	In fact, our framework is very much adaptable for the theory of secondary characteristic classes of Lie groupoids and cofoliations, respectively in relation to Chern-Simmons theory and symplectic geometry. Some of these themes are the topics of a follow-up article \cite{BCKN2}.
	
	\section{Principal bundles over Lie groupoids}\label{Section:PrincipalbundleoverLiegroupoidStack}
	
	In this section, we will recall the notion and basic properties of a principal bundle over a Lie groupoid. We 
	refer also to \cite{MM}, \cite{MR2817778}, \cite{MR2206877} and \cite{MR2270285} for some of the material 
	presented here; \cite{MR3150770} and \cite{BCKN1} are also referred.
	
	\subsection{Smooth spaces, Lie groupoids, and principal bundles}
	
	We will work over the category of ${\mathcal C}^{\infty}$-manifolds and refer to it also as the {\it category $\mathfrak S$ of smooth spaces and smooth maps}. All manifolds considered here will be second countable, but they are not necessarily Hausdorff spaces.
	We do not impose the Hausdorff condition to ensure that certain interesting examples of non-Hausdorff groupoids, such as foliation groupoids, are not left out.
	
	A {\it smooth space} or {\it smooth manifold} will mean an object in the category $\mathfrak S$. The tangent bundle of any 
	smooth space $X$ will be denoted by $TX$. A {\it smooth map} will refer to a morphism in $\mathfrak S$, and by a 
	submersion we mean a smooth map whose differential restricted to $T_xX$ is surjective for every point $x$ of the 
	domain $X$. An {\it \'etale map} is a smooth immersion in $\mathfrak S$ which is
	also a submersion. An \'etale map corresponds, therefore, 
	to the notion of a local diffeomorphism.
	
	For any smooth space $X$, its structure sheaf will be denoted by ${\mathcal O}_X$. A 
	vector bundle on $X$ will be identified with its sheaf of sections, which is a locally free 
	sheaf of ${\mathcal O}_X$-modules. The cotangent bundle of $X$ will be denoted by 
	$\Omega^1(X)\,:=\,T^*X$, and the $p$-th exterior power of $T^*X$ is 
	$\Omega^p(X)\,:=\,\bigwedge^p T^*X$.
	
	The {\it big \'etale site} ${\mathfrak S}_{et}$ on the category
	$\mathfrak S$ is given by the following Grothendieck
	topology on $\mathfrak S$: We call a family $\{U_i\longrightarrow X\}$
	of morphisms in $\mathfrak S$ with target $X$ a {\it covering
		family} of $X$, if all smooth maps $U_i\longrightarrow X$ are \'etale
	and the total map from the disjoint union
	$$\coprod_i U_i\,\longrightarrow\, X$$ is surjective. This
	defines a pretopology on $\mathfrak S$ generating a Grothendieck
	topology, which is known as the {\it \'etale topology} on $\mathfrak S$
	(compare also \cite{Vi}). If either of the two morphisms $U \ra X$ and $V \ra X$ in $\mathfrak S$ is a submersion, then the fiber product exists.
	
	\begin{remark}
		As in \cite{BCKN1}, we could as well here instead of working over the category of ${\mathcal C}^{\infty}$-manifolds consider
		the category of complex analytic manifolds or the category of smooth schemes of finite type over the field of complex numbers.
		We leave it to the reader to make the necessary amendments in the definitions and constructions presented below.
	\end{remark}
	
	\begin{definition}[Lie groupoid]
		A {\it Lie groupoid} $\mathbb X\,=\,[X_1\rightrightarrows X_0]$ is a groupoid
		internal to the category $\mathfrak S$ of smooth spaces, meaning the set $X_1$ of
		arrows and the set $X_0$ of objects are objects of $\mathfrak S$ (so they are smooth manifolds)
		and all structure morphisms
		$$s,\, t\,:\, X_1\,\longrightarrow\, X_0\, , \ \ m\,:\,
		X_1\times_{s,X_0,t} X_1\longrightarrow X_1\, ,
		$$
		$$
		i\,:\, X_1\,\longrightarrow\, X_1\, ,\ \ e\,:\, X_0\,\longrightarrow \,X_1
		$$
		are morphisms in $\mathfrak S$ (so they are smooth maps). Here $s$ is the source map, $t$ is the target map, $m$ is the multiplication map, $e$ is the identity section, and $i$ is the inversion map of the groupoid. The source map $s$ is a submersion. Using $i$, this implies that the target map $t$ is also a submersion. If $s$ and $t$ are \'etale, the Lie groupoid $\XX\,=\,[X_1\rightrightarrows X_0]$ is
		called {\it \'etale}. If the {\it anchor map}
		$$(s,\, t)\,:\, X_1\,\longrightarrow\, X_0\times X_0$$ is {\it proper}, the groupoid
		is called a {\it proper} Lie groupoid.
	\end{definition}
	
	\begin{definition}[Morphisms of Lie groupoids]\label{Def:MorphismLiegrpdsMoritamorphisms}
		A {\it morphism} between Lie groupoids ${\mathbb X}\,=\,[X_1\rightrightarrows X_0]$ and ${\mathbb
			Y}\,=\,[Y_1\rightrightarrows Y_0]$ is a functor $\phi\,=\,(\phi_1,\, \phi_0)\,:\, {\mathbb X}\,\longrightarrow\,
		{\mathbb Y}$ such that 
		$$\phi_0\,:\, X_0\,\longrightarrow\, Y_0\, , \ \ \phi_1\,:\, X_1\,\longrightarrow\, Y_1$$ are smooth maps
		which commute with all structure morphisms of the groupoids. A
		morphism $\phi\,:\, {\mathbb X}\,\longrightarrow\, {\mathbb Y}$ of 
		Lie groupoids is a {\it Morita morphism} if
		\begin{itemize}
			\item[(i)] $\phi_0\,:\, X_0\,\longrightarrow\, Y_0$ is a surjective submersion, and
			\item[(ii)] the diagram
			\[
			\xy \xymatrix{ X_1\ar[r]^{(s,t)} \ar[d]_{\phi_1}&
				X_0\times X_0\ar[d]^{\phi_0 \times \phi_0}\\
				Y_1\ar[r]^{(s,t)}& Y_0\times Y_0}
			\endxy
			\]
			is a pullback diagram, that is, $X_1\,\cong\, Y_1\times_{Y_0\times Y_0} (X_0\times X_0)$.
		\end{itemize}
		Two Lie groupoids $\mathbb X$ and $\mathbb Y$ are {\it Morita
			equivalent}, if there exists a third Lie groupoid $\mathbb Z$ and Morita morphisms
		$${\mathbb X}\,\stackrel{\phi}{\longleftarrow}\, {\mathbb Z}
		\,\stackrel{\psi}{\longrightarrow}\, {\mathbb Y}\, .$$
	\end{definition}
	
	Now we will recall the general notions of principal Lie groupoid bundles over smooth manifolds. First, we need to 
	define Lie groupoid actions.
	
	\begin{definition}[{Action of a Lie groupoid on a manifold}]\label{Definition:ActionOfLiegroupoid} Let
		$\mb{X}\,=\,[X_1\rra X_0]$ be a Lie groupoid. 
		Let $P$ be a smooth manifold. 
		A \textit{left action of the Lie groupoid $\mb{X}$ on the manifold $P$} is given by a
		pair of smooth maps
		$$a\,\colon\, P\,\longrightarrow\, X_0\ \ \text{ and }\ \ \mu\,\colon\, X_1\times_{s,X_0,a}P
		\,\longrightarrow\, P$$
		satisfying the following conditions: 
		\begin{enumerate}
			\item $\mu (1_{a(p)}, \,p)\,=\,p$ for all $p\,\in\, P$,
			
			\item $a(\mu(\gamma,\, p))\,=\,t(\gamma)$ for all $(\gamma,\,p)\,\in\, X_1\times_{s,X_0,a} P$, and
			
			\item $\mu(\gamma',\, \mu(\gamma,\, p))\,=\,\mu((\gamma'\circ \gamma),\, p)$ for all 
			$(\gamma',\,\gamma,\,p)\,\in\, X_1\times_{s,X_0,t}X_1\times_{s,X_0,a} P$.
		\end{enumerate}
	\end{definition}
	
	\noindent For convenience, the map $\mu$ in Definition \ref{Definition:ActionOfLiegroupoid} will usually be denoted 
	simply by a dot ``$\cdot$''. A \textit{right action of the Lie groupoid $\mb{X}$ on the manifold $P$} is defined likewise, with the action map 
	instead given by
	$$
	\mu\,\colon\, P\times_{a,X_0,t}X_1\,\longrightarrow\, P\, .
	$$
	
	\begin{example}
		Consider a Lie group $G$ as the Lie groupoid $[G\,\rra\, *]$ with singleton object set $*$ and morphism set $G$. The actions of
		$[G\rra *]$ on a smooth manifold $P$ are then precisely the smooth actions of the Lie group $G$ on $P$.
	\end{example}
	
	\begin{example}
		Given a smooth manifold $M$, we get a Lie groupoid $[M\,\rra\, M]$, where both maps $s$ and $t$ are given by
		the identity map of $M$. An action of $[M\rra M]$ on a 
		smooth manifold $P$ is the same as a smooth map $P\,\longrightarrow\, M$.
	\end{example}
	
	\begin{example}
		Suppose a Lie group $G$ is acting on a smooth manifold $M$. Then we have a Lie groupoid $[M\times G\,\rra\, M]$, the \textit{action groupoid}, with
		source and target maps 
		given by $(m,\, g)\,\longmapsto\, m$ and $(m,\, g)\,\longmapsto \,m\cdot g$
		respectively. An action of $[M\times G\rra M]$ on a manifold $P$ is then an action of the Lie group $G$ on the manifold $P$
		together with a $G$-equivariant smooth map $P\,\longrightarrow\, M$.
	\end{example}
	
	\begin{definition}[{Principal $G$-bundle over a Lie groupoid}]\label{Definition:principalbundleoverLiegroupoid}
		Let $G$ be a Lie group, and let $\mb{X}\,=\,[X_1\rra X_0]$ be a Lie groupoid. A \textit{right principal $G$-bundle
			over $\mb{X}$} is a (right) principal $G$-bundle $\pi\,\colon\, E_G\,\longrightarrow\, X_0$
		over the smooth manifold $X_0$ together with a left action
		\[(\pi:E_G\ra X_0,\ \mu\colon X_1\times_{s,X_0,\pi}E_G\ra E_G),\] of the Lie groupoid 
		$\mb{X}$ on the manifold $E_G$, such that $(\gamma\cdot p)\cdot g\,=\,\gamma\cdot (p\cdot g)$ for all $(\gamma,\,p,\,g)
		\,\in \, X_1\times_{X_0}E_G\times G$. 
		A principal $G$-bundle over $\mb{X}$ will be denoted as $\bigl(E_G\ra X_0,\, \mb{X}\bigr)$.	
		A \textit{morphism of right principal $G$-bundles} from $(E_G\ra X_0,\,\mb{X})$ to $ (E_G'\ra X_0,\,\mb{X})$ is 
		a morphism $$\psi\,\colon\, (E_G,\,\pi,\,X_0)\,\longrightarrow\, (E_G',\,\pi',\,X_0)$$ of the underlying
		principal bundles over the 
		manifold $X_0$, such that $$\psi(\mu(\gamma,\,e))\,=\,\mu'(\gamma,\,\psi(e))$$ for all $(\gamma,\,e)
		\,\in\, X_1\times_{s,X_0,a}E_G$. Similarly, we also have the notion of a \textit{left principal $G$-bundle over $\mb{X}$} and 
		{\em morphisms of left principal $G$-bundles}.
	\end{definition}
	
	\begin{example}
		Let $G,\, H$ be a pair of Lie groups. A principal $G$-bundle over the Lie groupoid $[H\rra *]$ is the same as a
		left-action of $H$ on $G$ that commutes with the right-translation action of $G$ on itself, meaning that
		$h\cdot(gg')\,=\, (h\cdot g)g'$ for all $h\, \in\, H$ and $g,\, g'\,\in\, G$.
	\end{example}
	
	\begin{example}
		Let $G$ be a Lie group. Let $\mb{X}\,=\,[M\rra M]$ be the Lie groupoid associated to a smooth manifold $M$. Then, a principal 
		$G$-bundle over $\mb{X}$ is the same as a principal $G$-bundle $P\,\longrightarrow\, M$ over the manifold $M$.
	\end{example}
	
	\begin{example}\label{Example:GHequivariant}
		Let $G,\, H$ be a pair of Lie groups. Then an $H$-equivariant smooth principal $G$-bundle $P\,\longrightarrow\,
		M$ over a smooth manifold $M$ defines a principal $G$-bundle over the Lie groupoid $[M\times H\rra M]$.
	\end{example}
	
	\begin{example}\label{Example:Gaugegrouoidbundle}
		Let $\pi\,\colon\,P\,\longrightarrow\, M$ be a principal $G$-bundle over a manifold $M.$ Consider the groupoid
		$[P\times P\rra P]$ with source and target
		maps as first and second projections respectively and with the obvious multiplication. The Lie group 
		$G$ acts on $P\times P$ by $(p_1,\, p_2)\cdot g\,=\, (p_1\cdot g,\, p_2\cdot g).$ Under the quotient by the action of $G$, the 
		groupoid $[P\times P\rra P]$ descends to a Lie groupoid $\mb{P}_{\rm Gauge}:=\,[\frac{P\times P}{G}\rra M
		\,].$ This groupoid is often called the \textit{gauge groupoid} or \textit{Atiyah groupoid} in the literature. It plays an 
		important role in gauge theory and the theory of Lie algebroids~(see \cite{RLF, KS}). 
		We will show that $\pi\colon P\ra M$ is a $G$-bundle over ${\mb{P}}_{\rm Gauge}$. To see this we first note that for any $\bigl([(p_1,\, p_2)], \,q\bigr)\,\in\, s^*{P}$ we have a $g\in G$ such that $q \cdot g=p_1.$ Of course $g$ depends on the choice of the representative element $(p_1,\, p_2)\,\in\, [(p_1,\, p_2)].$
		Now define a smooth map $\mu\colon s^*{P}\ra P$ by $\mu \bigl([(p_1, p_2)], q\bigr)=p_2\cdot g^{-1}$. Observe that the map is well defined for the following reason. Take any other representative element $(p_1\cdot g', p_2 \cdot g')=(p'_1, p'_2).$ Then $q \cdot (g g')=p'_1$ and thus $p'_2 \cdot (g g')^{-1}
\,=\,p_2\cdot (g' g'^{-1}g^{-1})\,=\,p_2\cdot g^{-1}.$ Moreover it is easily verified that
$\mu([p_1,\, p_2],\, q) g\,=\,\mu([p_1, \,p_2],\, q g)$. Thus $P\,\longrightarrow\, M$ is a
principal $G$-bundle over the Lie groupoid ${\mb{P}}_{\rm Gauge}.$		
	\end{example}

	Definition~\ref{Definition:principalbundleoverLiegroupoid} also gives the definition of real and complex vector 
	bundles (of rank $k$) over a Lie groupoid $\mb{X}$, which are basically principal $\mathrm{GL}(k, 
	\mb{F})$-bundles over $\mb{X}$, where $\mb{F}$ is the field of real or complex numbers.
	
	\begin{definition}[{Vector bundle over a Lie groupoid}]\label{Definition:vectorbundleoverLiegroupoid} 
		Let $\mb{X}\,=\,[X_1\rra X_0]$ be a Lie groupoid. A (rank $k$) \textit{vector bundle over the Lie groupoid
			$\mb{X}$} is a (rank $k$) vector bundle $\pi\,\colon\, E\,\longrightarrow\,
		X_0$ over the manifold $X_0$ together with a left action
		$$\bigl(\pi\,:\,E\,\longrightarrow\, X_0,\ \mu\,\colon\, X_1\times_{s,X_0,\pi}E\,\longrightarrow\, E\bigr)$$
		of $\mb{X}$ on $E$ such that the map $\mu_{\gamma}\,\colon\, E_{s(\gamma)}\,
		\longrightarrow\, E_{t(\gamma)}$, $a\,\longmapsto\,\gamma\cdot a$, is linear for every $\gamma\,\in\, X_1$.
		
		Morphisms of vector bundles over Lie groupoids are defined by imitating the definition of morphisms of principal bundles.
	\end{definition}
	
	We also will use the following general notion of a principal Lie groupoid bundle (compare
	with \cite{MR2817778} and \cite{MR2493616}). 
	
	\begin{definition}[{Principal $\mb{X}$-bundle over a manifold}]\label{Definition:PrincipalHbundleovermanifold}
		Let $\mb{X}\,=\,[X_1\rra X_0]$ be a Lie groupoid. Let $M$ be a smooth manifold. A
		\textit{principal right $\mb{X}$-bundle over $M$} is a surjective submersion $\pi\,\colon\,
		P\,\longrightarrow\, M$ together with a right action
		$$
		\bigl(a\colon P\,\longrightarrow\, X_0,\ \mu\colon P\times_{a, X_0,t} X_1\,\longrightarrow\, P\bigr)
		$$
		of the Lie groupoid $\mb{X}$ on the manifold $P$, such that following two conditions are satisfied:
		\begin{enumerate}
			\item $\pi(p\cdot h)\,=\,\pi(p)$ for all $(p,\, h)\,\in\, P\times_{X_0}X_1$, and
			
			\item the map $P\times_{a, X_0,t}X_1\,\longrightarrow\, P\times_{\pi,M,\pi} P$ given by
			$(p,\,h)\,\longmapsto\, (p,\,p\cdot h)$ is a diffeomorphism.
		\end{enumerate}
		Similarly, we also have the notion of a \textit{principal left $\mb{X}$-bundle over $M$}.
	\end{definition}
	
	Let us present two important examples with more details.
	
	\begin{example}\label{Ex:Liegrpoidliegrpdtorsor}
		
		Let $\mb{X}\,=\,[X_1\rra X_0]$ be a given Lie groupoid. The source map $s\,\colon\, X_1\,\longrightarrow\, X_0$
		defines a principal 
		$\mb{X}$-bundle over $X_0.$ Similarly, the target map $t\,\colon\, X_1\,\longrightarrow\, X_0$ along with the composition 
		$X_1\times_{s, X_0,t} X_1\,\longrightarrow\, X_1$ defines an action of $\mb{X}\,=\,[X_1\rra X_0]$ on the manifold 
		$X_1.$
	\end{example}
	
	\begin{example} \label{Ex:pullbackliegrpdtorsor}
		Given a Lie groupoid $\mb{X}\,=\,[X_1\rra X_0]$, a principal $\mb{X}$-bundle $\pi\,\colon\,
		P\,\longrightarrow\, M$ and a map of smooth manifolds $f\,\colon \,N\,\longrightarrow\,M,$ we have the obvious
		notion of a pullback $\mb{X}$-bundle $f^* P\,\longrightarrow\, N$. The action of the Lie groupoid $\mb{X}$ on $f^*(P)$ is given by ${\widetilde a}
		\,=\,a\circ {\rm pr}_1$ and $((p,\, n),\, \gamma) \,\longmapsto\, (\mu(p,\, \gamma), \,n),$ where $(\mu,\, a)$
		is the original action of $\mb{X}$ on $P$.
	\end{example}
	
	\subsection{Fibered categories and differentiable stacks}	
	
	We shall now recall the definition of differentiable stacks and the associated differentiable stack of a Lie groupoid.
	For this, we first need the following general definitions and constructions (compare \cite{{MR2493616}, {Vi}}).
	Given any category $\mc{K}$ we will denote as usual by $\mc{K}_0$ the objects and by $\mc{K}_1$ the morphisms
	of $\mc{K}$.
	
	\begin{definition}
		Let $\mc{C}$ and $\mr{X}$ be categories and $\pi_{\mr{X}}\,:\,\mr{X}\,\longrightarrow\,\mc{C}$ be a functor. An arrow $\theta\,\colon\, \xi
		\,\longrightarrow\,\eta$ in $\mr{X}$ is said to be a \textit{Cartesian arrow in $\mr{X}$} if for every
		morphism $\theta'\,\colon\, \xi'\,\longrightarrow\, \eta$ in $\mr{X}$, and a morphism $h\,\colon\,
		\pi_{\mr{X}}(\xi')\,\longrightarrow\, \pi_{\mr{X}}(\xi)$ in $\mc{C}$ with $\pi(\theta)\circ h\,=\,\pi(\theta')$,
		there exists a unique morphism $\Phi\,\colon\, \xi'\,\longrightarrow\, \xi$ in $\mr{X}$ such that
		$\theta\circ \Phi\,=\,\theta'$ in $\mr{X}$ and $\pi_{\mr{X}}(\Phi)\,=\,h$ in $\mc{C}$. We visually represent a Cartesian arrow by the following diagram:
		\begin{equation}\label{diagramforcfg}
			\begin{tikzcd}
				\xi' \arrow[rd,"\theta'"] \arrow[dd, dotted,"\Phi"] \arrow[rrr,maps to] & & & \pi(\xi') \arrow[rd,"\pi(\theta')"] \arrow[dd,"h"'] &\\
				& \eta \arrow[rrr, maps to] & & & \pi(\eta) \\
				\xi \arrow[ru,"\theta"'] \arrow[rrr, maps to] & & & \pi(\xi) \arrow[ru,"\pi(\theta)"'] & 
			\end{tikzcd}.
		\end{equation}
	\end{definition}
	
	\begin{definition}[{Fibered category}] A functor $\pi_{\mr{X}}\,:\,\mr{X}\,\longrightarrow\,\mc{C}$ is a
		\textit{fibered category} over $\mc{C}$ if for every
		$(\eta,f)\,\in\, \mr{X}_0\times_{\mc{C}_0,t}\mc{C}_1$ there exists a Cartesian arrow $\theta\,\colon\, \xi
		\,\longrightarrow\, \eta$ with $\pi_{\mr{X}}(\theta)\,=\,f$. We call such $\xi$ to be a \textit{pullback of $\eta$ along $f$}.
		
		A \textit{morphism of fibered categories over $\mc{C}$} from $(\mr{X},\,\pi_{\mr{X}},\,\mc{C})$
		to $(\mr{Y},\,\pi_{\mr{Y}},\,\mc{C})$ is a functor $F\,\colon\, \mr{X}\,\longrightarrow\, \mr{Y}$ that maps a
		Cartesian arrow in $\mr{X}$ to a Cartesian arrow in $\mr{Y}$ satisfying $\pi_{\mr{Y}}\circ F\,=\,\pi_{\mr{X}}$.
	\end{definition}
	
	\begin{example}\label{Ex:FibredBG}
		Given a Lie groupoid $\mb{X}\,=\,[X_1\rra X_0]$, consider the category ${\mathscr B}\mb{X}$ of principal $\mb{X}$-bundles. 
		The functor $\pi_{\mr{B}\mb{X}}\,:\,\mr{B}\mb{X}\,\longrightarrow\, \mf{S}$ that sends a
		principal $\mb{X}$-bundle $P\,\longrightarrow\, M$
		to the smooth manifold $M$ is a fibered category over $\mf{S}$.
	\end{example}
	
	\begin{example}\label{Ex:FibredunderM}
		Let $M$ be a smooth manifold and ${\underline M}$ the comma category. The functor
		${\underline M}\,\longrightarrow \, \mf{S}$ which sends an object $N\,\longrightarrow \, M$ in ${\underline M}$
		to the object $N$ in $\mf{S}$ is a fibered category over $\mf{S}$.
	\end{example}
	
	Let $\pi_{\mr{X}}\,:\,\mr{X}\,\longrightarrow\,\mc{C}$ be a fibered category. To an object $U$ of $\mc{C}$ we
	associate a category $\mr{X}(U)$ with
	\begin{gather}
		\text{Obj}(\mr{X}(U))\,=\,\{\eta\in \text{Obj}(\mr{X})\,\mid\, \pi_{\mr{X}}(\eta)=U \},\nonumber\\
		\text{Mor}_{\mr{X}(U)}(\eta,\,\eta')\,=\,\{f\in \text{Mor}(\mr{X})\,\mid\,\pi_{\mr{X}}(f)\,=\,1_U\}\, .\nonumber
	\end{gather}
	The category $\mr{X}(U)$ is called the \textit{fiber of $U$}. In this fashion we obtain a pseudo-functor
	associated to $\pi_{\mr{X}}\,:\,\mr{X}\,\longrightarrow\,\mc{C}$,
	\[\mr{X}\,:\,\mc{C}^{\rm op}\,\longrightarrow\, \rm {Cat}\]
	that sends $U$ to the category $\mr{X}(U)$. It should be clarified that by abuse of notation this pseudo-functor
	will be also denoted by $\mr{X}.$
	
	Let $\mc{C}$ be a category with a specified Grothendieck topology. We refer to such a category as a \text{site} 
	(compare \cite{Vi}). In particular, by the site ${\mf{S}}$ we mean the big \'etale site $\mf{S}_{\text{\'et}}$ as 
	defined before.
	
	Let $\pi_{\mr{X}}\,:\,\mr{X}\,\longrightarrow\,\mc{C}$ be a fibered category over a site $\mc{C}$. For the ease of 
	exposition we will also simplify our notation here: $U_{i j}$ will denote the fiber product $U_i\times_{U}U_j$ in 
	$\mc{C}$ and so on, while ${\rm pr}_i,\, {\rm pr}_{i j}$ etc. will denote various projection maps obtained from the 
	pullback diagrams; for an arrow $f\,:\,U\,\longrightarrow\, V$ in $\mc{C}$ the functor 
	${\mr{X}}(f)\,:\,\mr{X}(V)\,\longrightarrow\, {\mr{X}}(U)$ will be denoted by $f^{*}$.
	
	Let $(\mr{X},\,\pi_{\mr{X}},\,\mc{C})$ be a fibered category over a site ${\mc{C}}$, and let
	$\{\sigma_i\,\colon\, 
	U_i\,\longrightarrow\, U\}$ be a covering of an object $U$ in $\mc{C}$.
	To the collection $\{U_i\,\longrightarrow\, U\}$ we associate the following \textit{descent category}
	$\mr{X}(\{U_i\rightarrow U\})$. An object in $\mr{X}(\{U_i\rightarrow U\})$ is a family of
	pairs $(\{\xi_i\},\,\{\phi_{ij}\})$, where each $\xi_i$ is an object of $\mr{X}(U_i)$ and each
	$\phi_{ij}\,\colon\, \rm{pr}_2^*(\xi_j)\,\longrightarrow\, {\rm pr}_1^*(\xi_i)$ is an isomorphism in
	$\mr{X}(U_{ij})$ satisfying the cocycle relation ${\rm pr}_{13}^*(\phi_{ik})
	\,=\,{\rm pr}_{12}^*(\phi_{ij})\circ {\rm pr}_{23}^*(\phi_{jk})$ in $\mr{X}(U_{ijk})$. An arrow
	$$\big(\{\xi_i\},\,\{\phi_{ij}\})\,\longrightarrow\, (\{\eta_i\},\,\{\psi_{ij}\}\big)$$
	is a collection $\{\alpha_i\,\colon\, \xi_i\,\longrightarrow \,\eta_i\}$ satisfying
	$\psi_{ij}\circ {\rm pr}_2^*\alpha_j\,=\,{\rm pr}_1^*\alpha_i\circ \phi_{ij}$ for every pair of indices $i,\,j$.
	Here, for any morphism $\sigma_i:U_i\ra U$ in $\mc{C}$, we have the associated functor $\sigma_i^*:\mr{X}(U)\ra \mr{X}(U_i)$. So, for the object $\xi$ in $\mr{X}(U)$, we have the object $\sigma_i^*(\xi)$ in $\mr{X}(U_i)$. Moreover we get a functor 
	\begin{equation}\label{Eq:funcDutoDescent}
		\mr{X}(U)\,\longrightarrow\, \mr{X}(\{U_i\rightarrow U\})
	\end{equation}
	that sends an object $\xi$ in $\mr{X}(U)$ to the object $\{\sigma_i^{*}(\xi), \,\phi_{ij}\}$, where
	$$\phi_{ij}\,\colon\, {\rm pr}_2^*(\sigma_j^*(\xi))\,\longrightarrow\, {\rm pr}_1^*(\sigma_i^*(\xi))$$ are the isomorphisms
	given by the universal property of a pullback.
	
	\begin{definition}[Stack]
		Let $\mc{C}$ be a site. A fibered category $(\mr{X},\,\pi_{\mr{X}},\,\mc{C})$ is said to be
		a \textit{stack (over the site} $\mc{C})$ if for each object $U$ of $\mc{C}$, and a covering
		$\{U_\alpha\,\longrightarrow\, U\}$ of $U$, the functor defined in \eqref{Eq:funcDutoDescent} is an
		equivalence of categories. A \textit{morphism} of stacks is a morphism of the underlying fibered categories.
	\end{definition}
	
	We are mainly interested here in \textit{categories fibered in groupoids}; that is, a fibered category $\mr{X} 
	\,\longrightarrow\, \mc{C}$ such that each fiber $\mr{X}(U)$ is a groupoid. Similarly, {\em stack} will mean here that the underlying fibered category is a category fibered in groupoids.
	
	The following standard and useful examples will be referred to time and again.
	
	\begin{example}\label{Ex:StackunderM}
		The fibered category ${\underline M}\,\longrightarrow \, \mf{S}$ associated to the manifold $M$ in
		Example \ref{Ex:FibredunderM} is a stack over the
		site $\mf{S}$ with respect to the \'etale topology. Here and onwards, we denote this stack simply by $M$.
	\end{example}
	
	\begin{example}
		Let $\mb{X}\,=\,[X_1\rra X_0]$ be a Lie groupoid. Then the 
		fibered category $\mr{B}\mb{X}\,\longrightarrow \, \mf{S}$ in Example \ref{Ex:FibredBG} is a stack over the
		site $\mf{S}$ with respect to the \'etale topology. This is also known as the
		\textit{classifying stack} associated to the Lie groupoid $\mb{X}$.
	\end{example}
	
	Now we can introduce the main class of stacks, which are important for geometry and analysis, generalizing smooth manifolds.
	
	\begin{definition}[{Differentiable stack}]
		Let $\mf{S}$ be the big \'etale site.
		A stack $\mr{X}\,\longrightarrow \,\mf{S}$ is called a \textit{differentiable stack} if there exists
		a Lie groupoid $\mb{X}$ such that there is an isomorphism of stacks $\mr{B}\mb{X}\,\cong\, \mr{X}$.
	\end{definition}
	
	The classifying stack $\mr{B}\mb{X}\,\longrightarrow\, \mf{S}$ of a Lie groupoid $\mathbb{X}$ and the stack ${M}\ra \mf{S}$ 
	associated to a smooth manifold $M$ are both examples of differentiable stacks over the site $\mf{S}_{\text{\'et}}$. 
	It is a known fact that a pair of Morita equivalent Lie groupoids yields a pair of isomorphic stacks (see 
	\cite{MR2817778}, \cite{BCKN1}).
	
	A useful equivalent definition of a differentiable stack is given by using the notion of a `presentation' or `atlas' 
	(see \cite{MR2817778}, \cite{MR2206877} and \cite{BCKN1}).
	
	\begin{definition}[Representable surjective submersion]
		A morphism of
		stacks $F\,:\, \mr{X}\,\longrightarrow\, {\mr{Y}}$ over $\mf{S}$ is called a {\it{representable surjective submersion}}
		if for every smooth manifold $U$ and every morphism of stacks $U\,\longrightarrow\, {\mr{Y}}$, the following two conditions hold:
		\begin{enumerate}
			\item the $2$-fiber product of stacks $\mr{X}\times_{\mr{Y}}U$ is representable, i.e., isomorphic to a smooth manifold, and
			
			\item the induced morphism of smooth manifolds $\mr{X}\times_{\mr{Y}} U\,
			\longrightarrow\, U$ is a surjective submersion.
		\end{enumerate}
		
		A stack $\mr{X}$ over $\mf{S}$ is {\it differentiable}
		if there exists an object $X$ of $\mf{S}$ together with a
		representable surjective submersion $x\,:\, X\,\longrightarrow\, {\mr{X}}$ (see \cite{MR2817778}, \cite{MR2206877}).
		The map of stacks $x\,:\, X\,\longrightarrow\, {\mr{X}}$ is called an \textit{atlas} or
		\textit{presentation} for $\mr{X}$. 	
	\end{definition}
	
	\begin{example}\label{EX:Atlasliegrpd}
		Let $\mb{X}\,=\,[X_1\rra X_0]$ be a Lie groupoid. Let $t\,\colon\, X_1\,\longrightarrow\, X_0$ be the associated principal 
		$\mb{X}$-bundle, as in Example~\ref{Ex:Liegrpoidliegrpdtorsor}. Then we have a functor $x_0\colon X_0\,\longrightarrow\, 
		{\mr{B}}\mb{X}$ which sends an object $f\colon N\,\longrightarrow\,X_0$ to the pullback principal $\mb{X}$-bundle 
		$f^*X_1\,\longrightarrow\, N$. This functor in fact defines an atlas.
		
		Conversely given an atlas $x\,:\, X\,\longrightarrow\, {\mr{X}}$ for the differentiable stack $\mr{X},$ we can
		produce a Lie groupoid $\mb{X}\,=\,[X\times_{\mr{X}} X\rra X].$ Then we have an isomorphism $\mr{B}\mb{X}\,\cong\,
		\mr{X}.$ If $y\,:\, Y\,\longrightarrow\, {\mr{X}}$ is any other atlas for $\mr{X}$, then $\mb{X}$ and
		$\mb{Y}\,=\,[Y\times_{\mr{X}} Y\rra Y]$ are Morita equivalent Lie groupoids.
	\end{example}
	
	\begin{definition}[{Deligne-Mumford stack}]\label{Def:Etalestack}
		A differentiable stack $\mr{X}\,\longrightarrow \,\mf{S}$ will be called a \textit{(proper) Deligne-Mumford stack} if there exists a (proper) \'etale presentation $x\,:\, X\,\longrightarrow\, {\mr{X}}.$
	\end{definition}
	
	\begin{example} Proper Deligne-Mumford stacks correspond to smooth orbifolds and are presented by proper 
		\'etale Lie groupoids (see \cite{Le}).
		
	\end{example}	
	Note that by the second part of Example \ref{EX:Atlasliegrpd} for any Deligne-Mumford stack $\mr{X}$ there exists 
	an \'etale Lie groupoid $\mb{X}\,=\,[X\times_{\mc{D}} X\rra X]$ such that $\mr{B}\mb{X}\,\cong\, \mr{X}.$ However the property of being \'etale is not preserved under Morita equivalence. Thus there may exist a non-\'etale Lie groupoid presenting the same Deligne-Mumford stack.
	
	\section{de Rham cohomology for Lie groupoids and differentiable stacks}\label{Section:deRhamcohomologyofLiegroupoid}
	
	In this section, we study connections and de Rham cohomology for Lie groupoids and Deligne-Mumford stacks. The existence of an
	integrable connection will hereby play an important role.
	
	\subsection{Connections and integrable connections on Lie groupoids}
	
	In this section, we will define the notion of an (integrable) connection $\mc{H}$ on a Lie groupoid $\mb{X}
	\,=\, [X_1\rra X_0]$ and introduce the de Rham 
	cohomology ring of the pair $(\mb{X},\,\mc{H})$. We begin by recalling the Atiyah sequence in the classical set-up for smooth manifolds.
	
	Let $M$ be a smooth manifold, $G$ a Lie group and $\pi\,\colon\, P\,\longrightarrow \, M$ a principal $G$-bundle. The Lie algebra of $G$
	will be denoted by $\mf{g}$. 
	Consider the diagonal action of $G$ on $P\times{\mf{g}}$ induced by the adjoint action of $G$ on
	$\mf{g}$. Then the quotient
	$$
	\text{ad}(P)\,:=\, (P\times \mf{g})/G \, \longrightarrow\, P/G\,=\, M
	$$
	becomes a vector bundle over $M$; it is known as the \textit{adjoint bundle} of $P$.
	On the other hand, the action of $G$ on $P$ induces an action of $G$ on the tangent bundle $TP$ of $P$. The quotient
	$$
	\text{At}(P)\,:=\, (TP)/G \, \longrightarrow\, P/G\,=\, M
	$$
	is a vector bundle, which is known as the \textit{Atiyah bundle} (see \cite{At}). The action of $G$ on $P$ identifies
	the trivial bundle $P\times \mf{g}\,\longrightarrow \, P$ with the relative tangent bundle for the projection $\pi$,
	and hence $\text{ad}(P)$ is a subbundle of $\text{At}(P)$. Consequently, we 
	obtain a short exact sequence of vector bundles on $M$
	\begin{equation}\label{Equation:AtiyahSequenceforbundleoverManifold}
		0\, \longrightarrow\, \text{ad}(P)\, \stackrel{j^P}{\longrightarrow} \, \text{At}(P)\,
		\stackrel{d\pi}{\longrightarrow} \, TM \, \longrightarrow\, 0
	\end{equation} 
	which is called the \textit{Atiyah sequence} associated to the principal $G$-bundle $P$ (see \cite{At}). Note that the 
	projection $d\pi$ in \eqref{Equation:AtiyahSequenceforbundleoverManifold} gives the quotient by $G$ of the
	differential $d\pi\, :\, TP\, \longrightarrow\, \pi^*TM$ of the map $\pi$.
	
	A {\em connection} on a principal $G$-bundle $P$ is then defined to be a splitting of the Atiyah sequence. In other 
	words, a connection on $P$ is a homomorphism
	$$
	D\, :\, TM\, \longrightarrow\, \text{At}(P)
	$$
	such that $(d\pi)\circ D\,=\, \text{Id}_{TM}$, where $d\pi$ is the homomorphism in
	\eqref{Equation:AtiyahSequenceforbundleoverManifold}.
	
	Now consider a principal $G$-bundle $(E_G\,\ra\, X_0,\,\,[X_1\,\rra\, X_0])$ over a Lie groupoid $\mb{X}=[X_1\,\rra\, X_0]$. The 
	underlying (classical) principal $G$-bundle $E_G\, \longrightarrow\, X_0$ yields the Atiyah sequence of vector bundles over $X_0$,
	\begin{equation}\label{AtiyahforE_G}
		0\, \longrightarrow\, \text{ad}(E_G)\, \xra{j^P} \, \text{At}(E_G)\, \xra{d\pi}\, TX_0 \, \longrightarrow\, 0
	\end{equation}
	(see \eqref{Equation:AtiyahSequenceforbundleoverManifold}).
	Our aim is to interpret \eqref{AtiyahforE_G} as an exact sequence of vector bundles over the Lie 
	groupoid $\mb{X}=[X_1\rra X_0]$. For that we need an action of the Lie groupoid $\mb{X}$ on the tangent space manifold $TX_0$. We already 
	have the tangent bundle projection map $TX_0\, \longrightarrow\, X_0$. It remains to provide the smooth map
	\begin{equation}\label{emu}
		\mu\,\colon \,X_1\times_{X_0}TX_0\, \longrightarrow\, TX_0
	\end{equation}
	satisfying the conditions in Definition \ref{Definition:ActionOfLiegroupoid}. As $TX_0\, \longrightarrow\,
	X_0$ is surjective, (by condition (2) in Definition \ref{Definition:ActionOfLiegroupoid}) the map $\mu$
	in \eqref{emu} should produce a linear 
	map $T_{s(\gamma)}X_0\, \longrightarrow\, T_{t(\gamma)}X_0$ for every $\gamma\,\in\, X_1$. Precisely for this
	purpose we introduce below the notion of a connection on the Lie groupoid $\mb{X}$.
	
	Let $\mb{X}\,=\,[X_1\rra X_0]$ be a Lie groupoid. Let $\mc{H}\,\subset \,TX_1$ be a distribution on the
	manifold $X_1$ which is a complement of the kernel of the differential
	$ds\,\colon\, TX_1\, \longrightarrow\, TX_0$
	of the source map $s$; so
	\begin{equation}\label{tsd}
		\mc{H}_{\gamma}\oplus \ker(ds)_{\gamma}\, =\, T_\gamma X_1
	\end{equation}
	for every $\gamma\,\in\, X_1$. Let
	\begin{equation}\label{tsd2}
		P_{\mc{H}_{\gamma}}\, :\, T_\gamma X_1\, \longrightarrow\, \mc{H}_{\gamma}\, \subset\, T_\gamma X_1
	\end{equation}
	be the projection constructed using the decomposition in \eqref{tsd}. For any $v\, \in\, T_\gamma X_1$,
	$$
	P_{\mc{H}_{\gamma}}(v)\,\in\, \mc{H}_{\gamma}
	$$
	is called the \textit{horizontal component} of $v$.
	
	Since $s$ is a submersion, the decomposition in \eqref{tsd}
	yields an isomorphism $\mc{H}_{\gamma}\,\stackrel{\sim}{\longrightarrow}\, T_{s(\gamma)}X_0$. In fact,
	we have an isomorphism of vector bundles
	$$
	\mc{H}\,\stackrel{\sim}{\longrightarrow}\, s^*TX_0
	$$
	over $X_1$. The following compositions of homomorphisms
	\begin{equation}\label{eth}
		s^*TX_0 \,\stackrel{\sim}{\longrightarrow}\,\mc{H}\, \hookrightarrow\, TX_1
		\,\stackrel{dt}{\longrightarrow}\, t^*TX_0
	\end{equation}
	over $X_1$ will be denoted by $\theta$, where $dt\, :\, TX_1\,\longrightarrow\, t^*TX_0$ is the differential of the map $t$.
	
	We now recall the definition of a connection on Lie groupoids from \cite[Def. 3.1]{MR3150770} (see also 
	\cite[4.1.]{BCKN1})
	
	\begin{definition}[{Connection on a Lie groupoid}]\label{Definition:connectiononLiegroupoid} 
		A \textit{connection} $\mc{H}$ on a Lie groupoid $\mb{X}\,=\,[X_1\rra X_0]$ is a distribution $\mc{H}\,\subset\, 
		TX_1$
		satisfying the following three conditions:
		\begin{enumerate}
			\item $\mc{H}$ is a complement of kernel of $ds$,
			
			\item $(de)_x(T_x X_0)\, =\, \mc{H}_{e(x)}X_1$ for all $x\,\in\, X_0$, and
			
			\item $\theta_\gamma \circ \theta_{\gamma'}\,=\theta_{\gamma\circ \gamma'}$ for every composable
			pair $\gamma,\,\gamma'\,\in\, X_1$, where $\theta$ is the homomorphism in \eqref{eth}.
		\end{enumerate}
		
		A connection $\mc{H}$ on $\mb{X}\,=\,[X_1\rra X_0]$ is said to be \textit{integrable}
		(or \textit{flat}) if the distribution $\mc{H}\,\subset\, TX_1$ is integrable.
	\end{definition}
	
	\begin{remark} Behrend \cite{MR2183389} defines a connection on a Lie groupoid $\mb{X}\,=\,[X_1\rra X_0]$
		as a subbundle $E\,\subset\, TX_1$ such that $[E\rra TX_0]$ is a subgroupoid of the tangent groupoid $[TX_1 \rra TX_0]$,
		which is integrable (or flat) if $E\subset TX_1$ is an integrable distribution. Tang \cite{MR2238946} called these integrable 
		connections also \'etalifications and studies them in the context of deformation quantization. These distributions $E\,\
		\subset TX_1$ are also referred to in the literature as multiplicative distributions \cite{DE}.
	\end{remark}
	
	Let $\mc{H}\,\subset\, TX_1$ be a connection on the Lie groupoid $\mb{X}$. Consider a
	principal $G$-bundle $(E_G\ra
	X_0,\ \mb{X})$ over $\mb{X}$. Then
	$${{\rm pr}_1}\,:\,s^*E_G\,=\,X_1\times_s E_G\,\longrightarrow\, X_1$$ is a principal $G$-bundle over 
	$X_1$. We get an associated Lie groupoid $[s^*E_G\rra E_G]$ whose source map
	$s^*E_G\,\longrightarrow\, E_G$ is the second projection $(a,\, \gamma)\,\longmapsto\, a$ and
	the target map $s^*E_G\,\longrightarrow\, E_G$ is the action $(a,\, \gamma)\,\longmapsto\, a\cdot\gamma$
	(the map $\mu$ in Definition \ref{Definition:principalbundleoverLiegroupoid}).
	Let \begin{equation}\label{dpr}
		d{\rm pr}_{1}\,\colon\, Ts^*E_G\,\longrightarrow\, TX_1
	\end{equation}
	be the differential of the projection ${\rm pr}_{1}\, :\, s^*E_G\,\longrightarrow\, X_1$,
	$(a,\, \gamma)\,\longmapsto\, \gamma$.
	
	The following lemma is then evident.
	
	\begin{lemma}\label{pullbackofintegrableconnection}
		Let $\mc{H}\,\subset\, TX_1$ be a connection on the Lie groupoid $\mb{X}=[X_1\rra X_0]$.
		Then $$\widetilde{\mc{H}}\,:=\,({d\rm pr}_{1 })^{-1}(\mc{H})\,\subset\, Ts^*E_G$$ is a connection on the Lie
		groupoid $[s^*E_G\rra E_G]$, where ${d\rm pr}_{1}$ is the map in \eqref{dpr}.
		If the connection $\mc{H}$ is integrable, then so is $\widetilde{\mc{H}}$.
	\end{lemma}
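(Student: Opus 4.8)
The plan is to verify the three conditions of Definition~\ref{Definition:connectiononLiegroupoid} for $\widetilde{\mc{H}}\,:=\,(d{\rm pr}_1)^{-1}(\mc{H})$ on the Lie groupoid $[s^*E_G\rra E_G]$, and then deal separately with integrability. First I would fix notation for the structure maps of $[s^*E_G\rra E_G]$: the source map $\widetilde{s}(a,\gamma)=a$, the target map $\widetilde{t}(a,\gamma)=a\cdot\gamma$, and the projection ${\rm pr}_1(a,\gamma)=\gamma$, which is a groupoid morphism to $\mb{X}=[X_1\rra X_0]$ covering $\pi\colon E_G\to X_0$. The key structural fact is that this morphism of groupoids is \emph{fibrewise} the principal $G$-bundle $s^*E_G\to X_1$, so that $d{\rm pr}_1\colon Ts^*E_G\to{\rm pr}_1^*TX_1$ is surjective with kernel the vertical bundle of $s^*E_G\to X_1$; moreover $\ker(d\widetilde{s})$ maps onto $\ker(ds)$ under $d{\rm pr}_1$, since $\widetilde{s}$ sits over $s$. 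These two observations give condition~(1) almost for free: $\widetilde{\mc{H}}=(d{\rm pr}_1)^{-1}(\mc{H})$ contains $\ker(d{\rm pr}_1)$, and because $\mc{H}\oplus\ker(ds)=TX_1$ pointwise while $d{\rm pr}_1$ restricted to $\ker(d\widetilde{s})$ surjects onto $\ker(ds)$ with kernel $\ker(d{\rm pr}_1)\subset\ker(d\widetilde{s})$, a dimension count (or a direct diagram chase) yields $\widetilde{\mc{H}}\oplus\ker(d\widetilde{s})=Ts^*E_G$ at every point.

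For condition~(2), the unit section of $[s^*E_G\rra E_G]$ is $\widetilde{e}(a)=(a,\,e(\pi(a)))$, so ${\rm pr}_1\circ\widetilde{e}=e\circ\pi$. Differentiating, $d\widetilde{e}(T_aE_G)$ is mapped by $d{\rm pr}_1$ into $(de)_{\pi(a)}(T_{\pi(a)}X_0)=\mc{H}_{e(\pi(a))}$ by condition~(2) for $\mc{H}$; hence $d\widetilde{e}(T_aE_G)\subset\widetilde{\mc{H}}_{\widetilde{e}(a)}$. Since both sides now sit inside $Ts^*E_G$ and one checks $d\widetilde{e}(T_aE_G)$ is a complement to $\ker(d\widetilde{s})$ at $\widetilde{e}(a)$ (the unit section is always a section of $\widetilde{s}$, and $\widetilde{\mc{H}}$ is such a complement by condition~(1) just proven), the inclusion is an equality by dimension. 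For condition~(3), I would identify the homomorphism $\widetilde{\theta}\colon\widetilde{s}^*TE_G\to\widetilde{t}^*TE_G$ attached to $\widetilde{\mc{H}}$ via \eqref{eth}. The cleanest route is to show that $\widetilde{\theta}$ is compatible with $\theta$ under the bundle maps induced by ${\rm pr}_1$, i.e. that the horizontal lift for $\widetilde{\mc{H}}$ projects to the horizontal lift for $\mc{H}$; this is immediate from $\widetilde{\mc{H}}=(d{\rm pr}_1)^{-1}(\mc{H})$. Then multiplicativity $\widetilde{\theta}_{\gamma_2}\circ\widetilde{\theta}_{\gamma_1}=\widetilde{\theta}_{\gamma_2\circ\gamma_1}$ follows from multiplicativity of the horizontal transport (equivalently, $\widetilde{\mc{H}}$ being a subgroupoid of $T[s^*E_G\rra E_G]$ because it is the preimage of the subgroupoid $\mc{H}$ under the tangent of a groupoid morphism), using that the $G$-action on $s^*E_G$ is by groupoid automorphisms so the vertical directions are transported consistently.

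The main obstacle, and the part deserving the most care, is the integrability claim. Here I would argue as follows: if $\mc{H}\subset TX_1$ is involutive, then $\widetilde{\mc{H}}=(d{\rm pr}_1)^{-1}(\mc{H})$ is involutive too. One way is Frobenius plus a local model: since ${\rm pr}_1\colon s^*E_G\to X_1$ is a principal $G$-bundle, locally $s^*E_G\cong U\times G$ over $U\subset X_1$, and $\widetilde{\mc{H}}$ restricted there is $\mc{H}|_U\oplus TG$ (the preimage of a foliation under a projection with the full fibre direction added); the bracket of two such sections stays in $\mc{H}|_U\oplus TG$ because $\mc{H}|_U$ is involutive and $TG$ is involutive and brackets of an $\mc{H}$-section with a $G$-fibre section land in $TG$. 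Alternatively, and more invariantly: $\ker(d{\rm pr}_1)$ is itself an integrable distribution (its leaves are the $G$-orbits, i.e.\ the fibres of ${\rm pr}_1$), $\widetilde{\mc{H}}$ contains it, and $\widetilde{\mc{H}}/\ker(d{\rm pr}_1)\cong{\rm pr}_1^*\mc{H}$ is the pullback of an integrable distribution; a standard lemma then shows an extension of an integrable distribution by an integrable subbundle whose quotient is (pulled back from) integrable is integrable — one checks the Lie bracket condition on sections split as "horizontal lift $+$ vertical", using that brackets of vertical fields are vertical and brackets involving a basic horizontal lift project correctly because ${\rm pr}_1$ is a submersion and $\mc{H}$ is involutive. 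I would present the local-triviality version as the primary argument since it is the shortest and makes the structure transparent, remarking that it is exactly the statement that the pullback of a foliation under a surjective submersion (here a principal bundle) is a foliation.
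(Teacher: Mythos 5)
The paper itself states this lemma without proof (``The following lemma is then evident''), so there is no written argument to compare with; the question is whether your verification is sound. The substantive problem is your treatment of condition (3) of Definition~\ref{Definition:connectiononLiegroupoid}. You deduce multiplicativity of $\widetilde{\theta}$ from the assertion that $\widetilde{\mc{H}}$ is the preimage of the \emph{subgroupoid} $\mc{H}$ under the tangent map of a groupoid morphism. But the paper's definition of a connection does not make $[\mc{H}\rra TX_0]$ a subgroupoid of the tangent groupoid: its condition (3) is only the identity $\theta_\gamma\circ\theta_{\gamma'}=\theta_{\gamma\circ\gamma'}$ of the induced maps on $TX_0$, which is strictly weaker than $dm$-invariance of $\mc{H}$ (the latter is Behrend's notion, which the paper explicitly distinguishes from its own). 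The projection compatibility $d\pi\circ\widetilde{\theta}_{(\gamma,a)}=\theta_\gamma\circ d\pi$, which is indeed immediate, only controls the $\pi$-horizontal part of $\widetilde{\theta}$: unwinding \eqref{ethtil} and differentiating the associativity of the action, the two sides of condition (3) for $\widetilde{\mc{H}}$ differ by $d\mu(u,0)$, where $u$ is the difference between the $dm$-product of the two $\mc{H}$-horizontal lifts and the $\mc{H}$-horizontal lift at the composed arrow; condition (3) for $\mc{H}$ forces $u\in\ker(ds)\cap\ker(dt)$, but not $u=0$, and $d\mu(u,0)$ need not vanish. A concrete test: take the bundle of groups $X_1=\mathbb{R}\times\mathbb{R}\rra X_0=\mathbb{R}$ with $s=t={\rm pr}_1$ and $\mc{H}_{(m,k)}=\mathbb{R}\,(\partial_m+a(k)\partial_k)$, where $a$ is smooth with $a(0)=0$; here $\theta\equiv\mathrm{id}$, so conditions (1)--(3) hold for \emph{every} such $a$, while $dm$-invariance holds only for additive $a$. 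Acting on the trivial bundle $E_G=\mathbb{R}\times\mathbb{R}$ (with $G=\mathbb{R}$) by $(m,k)\cdot(m,g)=(m,k+g)$, one computes $\widetilde{\theta}_{((m,k),(m,g))}(w_m,w_g)=(w_m,\,a(k)w_m+w_g)$, and condition (3) for $\widetilde{\mc{H}}$ holds precisely when $a(k)+a(k')=a(k+k')$ for all $k,k'$. So the step ``preimage of a subgroupoid is a subgroupoid'' is exactly the non-evident point: it needs the stronger multiplicativity property of $\mc{H}$, which neither the paper's definition nor your argument supplies; for multiplicative connections (for instance the \'etale case $\mc{H}=TX_1$, or the action-groupoid connection of Example~\ref{Example:connectionOnMG}) your argument for (3) does go through.

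The rest of your plan is essentially fine. For condition (1) there is a small slip: $\ker(d{\rm pr}_1)$ is \emph{not} contained in $\ker(d\widetilde{s})$ (their intersection is zero), and $d{\rm pr}_1$ maps $\ker(d\widetilde{s})$ isomorphically onto $\ker(ds)$, so the kernel you quote is wrong and the dimension count as stated does not balance; with this corrected, or with the direct decomposition of a tangent vector $(v,w)$ (with $ds(v)=d\pi(w)$) as $(P_{\mc{H}_\gamma}v,\,w)+(v-P_{\mc{H}_\gamma}v,\,0)$, condition (1) follows. Your verification of condition (2) via the unit section and a dimension argument is correct, and the integrability claim is handled correctly: your local product model is exactly the standard fact that the preimage of an involutive distribution under a submersion is involutive, which is all that is needed there.
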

	
	The connection $\widetilde{\mc{H}}$ in Lemma \ref{pullbackofintegrableconnection} will be called
	the \textit{pullback} of $\mc{H}$. For the purpose of future reference we define here the
	map $\widetilde \theta$ (see \eqref{eth}) to be given by the following compositions of homomorphisms
	\begin{equation}\label{ethtil}
		\begin{split}
			\widetilde \theta :\,\,&{{\rm pr}_2}^*TE_G \,\stackrel{\sim}{\longrightarrow}\,\widetilde{\mc{H}}\, \hookrightarrow\, Ts^*E_G
			\,\stackrel{d \,\mu}{\longrightarrow}\, {\mu}^*TE_G,\\
			&\qquad w \,\longmapsto\, {\widetilde \theta}_{\gamma, a}(w)\,=\,{d\mu}_{(\gamma, a)} (P_{\mc{H}_{\gamma}}(v),\, w)
		\end{split}
	\end{equation}
	for any $v\in T_{\gamma}X_1,\, w\,\in\, T_a E_G$ satisfying $ds(\gamma,\, v)\,=\,d\pi(a,\, w)$ (in other words, for
	any $(v, \,w)\,\in \,T_{(\gamma, a)}s^{*}E_G$).
	It is immediate that the map $\widetilde \theta$ is well defined since $P_{\mc{H}_{\gamma}}$ is actually trivial on ${\ker}(ds).$
	
	\begin{example}\label{Example:connectionOnMM}
		Let $\mb{X}=[M\rra M]$ be the Lie groupoid associated to a smooth manifold $M$. Then $\ker (ds)\,=\,\{0\}$ and
		the map $m\,\longmapsto \,\mc{H}_mM\,:=\,T_mM$, $m\, \in\, M$, defines an integrable connection $\mc{H}_{\mb{X}}\subset 
		TM$ on $\mb{X}$.
	\end{example}
	
	\begin{example}\label{Example:connectionOnMG}
		Let $M$ be a smooth manifold equipped with an action of a Lie group $G$. 
		Let $\mb{X}=[M\times G\rra M]$ be the associated action Lie groupoid. As the source map
		$s\,\colon\, M\times G\,\longrightarrow\, M$ is the projection map,
		the differential $(ds)_{(m,g)}\,\colon\, T_{(m,g)}(M\times G)\,\longrightarrow\, T_mM$ is given by
		$(v,\,A)\, \longmapsto\, v$ for all $v\,\in\, T_mM,\ A\,\in\, T_gG$. Thus, 
		\[\ker((ds)_{(m,g)})\,=\,\{(v,\,A)\,\in\, T_mM\times T_gG\,\mid\, v\,=\,0\}\,=\, T_gG\]
		for each $(m,\,g)\,\in\, M\times G$. Now defining $\mc{H}_{(m,g)}(M\times G)\,=\,T_mM\,\subset\, T_{(m,g)}(M\times 
		G)$, we obtain a connection $\mc{H}_{\mb{X}}\,\subset\, T(M\times G)$ on the action groupoid $\mb{X}$. This
		connection is clearly integrable.
	\end{example}
	
	\begin{example}\label{Example:Etalegrouoidcoonnection}
		Let $\mb{X}\,=\,[X_1\rra X_0]$ be an \'etale Lie groupoid, meaning the map $s\,\colon\, X_1
		\,\longrightarrow\, X_0$ is a local 
		diffeomorphism. Then the differential $ds\,\colon\, TX_1\,\longrightarrow\, s^*TX_0$ is an 
		isomorphism and the distribution $\mc{H}_{\mb{X}}=\, TX_1$ is a connection on the Lie 
		groupoid $\mb{X}$. This connection is again integrable. 
	\end{example}
	
	\begin{example}\label{Example:Vectorbundlegroupoid}
		Let $\pi\,\colon\, E\,\longrightarrow\, M$ be a finite rank vector bundle over the manifold $M.$ We can view
		$\mb{X}=[E\rra M]$ as a groupoid with both the source and target maps both being $\pi$ (thus, a pair of composable morphisms will belong to the same fiber), and composition is simply the addition of vectors on the corresponding fiber. Then a connection 
		on the vector bundle $\pi\,\colon\, E\,\longrightarrow\, M$ defines a connection $\mc{H}_{\mb{X}}$ on $\mb{X}$ by smoothly splitting 
		$E$ into the horizontal component complementing $\ker{d\pi}.$ Obviously, if one connection is integrable, then so is the other.
	\end{example}
	
	\begin{example}\label{Example:Gaugegrouoidconnection}

		Let $\pi\,\colon\,P\,\longrightarrow\, M$ be a principal $G$-bundle over a manifold $M$ and $\mb{P}_{\rm Gauge}:=\,[\frac{P\times P}{G}\rra M\,]$ 	 the gauge groupoid of Example~\ref{Example:Gaugegrouoidbundle}.
		Let $A$ be a connection on the 
		principal bundle $\pi\,\colon\, P\,\longrightarrow\, M$. The horizontal distribution $\mc{H}\subset TP$ defined by
		the connection $A$ 
		induces a connection $\mc{H}_{\mb{P}_{\rm Gauge}}$ on the Lie groupoid $\mb{P}_{\rm Gauge}$ as follows. Define $${\mc{H}}_{[p, 
			q]}\,:=\,\frac{{\mc{H}}_{p}\oplus T_qP}{T_{p, q}\,{(p, q)\cdot G}}\,\subset\, T_{[p, q]}\frac{P\times P}{G}\, ,$$ where 
		$(p,\, q)\cdot G\,\subset\, P\times P$ is the orbit of the element $(p,\, q)\,\in\, P\times P.$ Now since $\mc{H}$ complements 
		$\ker{d \pi}$ and it is $G$-invariant, ${ \mc{H}}_{[p, q]}$ defines a connection $\mc{H}_{\mb{P}_{\rm Gauge}}$ on $\mb{P}_{\rm Gauge}.$ 
		If the connection $A$ is integrable, then so is the induced connection $\mc{H}_{\mb{P}_{\rm Gauge}}$ on the gauge groupoid $\mb{P}_{\rm Gauge}.$
	\end{example}
	
	We refer to \cite[Section 2.3]{MR2183389} and \cite[Section 2.1.1]{MR2238946} for other interesting examples of connections on Lie groupoids.
	
	\subsection{Differential forms and de Rham cohomology of a pair 
		$(\mb{X},\mc{H})$}\label{Section:deRhamcohomologyofLiegroupoidwithConnection}
	
	Let us fix a pair $(\mb{X},\mc{H})$ consisting of a Lie groupoid $\mb{X}\,=\,[X_1\rra X_0]$ together with an existing connection
	$\mc{H}\,\subset\, TX_1$ on $\mb{X}$. We recall from \cite{MR3150770} the notion of a 
	differential $k$-form on $\mb{X}$.
	
	\begin{definition}[{Horizontal component of a differential form}]\label{Definition:Horizontalcomponentofadifferentialform}
		Let $\varphi\,:\,X_1\,\longrightarrow\,\bigwedge^k T^*X_1$ be a differential $k$-form on the manifold $X_1$. The
		\textit{horizontal component} of $\varphi$ (with respect to the connection $\mc{H}$) is the differential
		$k$-form
		\[\widehat{\mc{H}}(\varphi)(\gamma)(v_1,\,\cdots,\,v_k)\,=\,
		\varphi(\gamma)(P_{\mc{H}_{\gamma}}(v_1),\,\cdots,\, P_{\mc{H}_{\gamma}}(v_k)),\]
		for all $\gamma\,\in\, X_1$ and $v_i\,\in\, T_{\gamma}X_1$, $1\,\leq\, i\,\leq\, k$, where
		$P_{\mc{H}_{\gamma}}$ is the projection in \eqref{tsd2}.
	\end{definition}
	
	\begin{definition}[{Differential forms on a Lie groupoid}]\label{Definition:differentialformonaLiegroupoid}
		A differential $k$-form $\varphi\,\colon\, X_0\,\longrightarrow\,\bigwedge^kT^*X_0$ on the manifold $X_0$ is said
		to be a \textit{differential $k$-form} on $\mb{X}$ (with respect to the connection $\mc{H}$) if 
		$$\widehat{\mc{H}}(s^*\varphi)\,=\,\widehat{\mc{H}}(t^*\varphi)\, ,$$
		where $\widehat{\mc{H}}$ is constructed as in Definition \ref{Definition:Horizontalcomponentofadifferentialform}.
		We will denote the vector space of differential $k$-forms of the pair $(\mb{X}, \,\mc{H})$ by 
		$\Omega^k(\mb{X},\, \mc{H})$.
	\end{definition}
	
	The following lemma, which was stated in \cite{MR3150770} without proof, shows that differential forms on a Lie 
	groupoid with connection are closed with respect to exterior derivation under the assumption that the connection on 
	the Lie groupoid is integrable.
	
	\begin{lemma}\label{differentialisdifferentialform}
		Let $\mc{H}\,\subset\, TX_1$ be an integrable connection on a Lie groupoid $\mb{X}\,=\,[X_1\rra X_0]$.
		Let $\varphi\,:\,X_0\, \longrightarrow\,\bigwedge^kT^*X_0$ be a differential $k$-form on $\mb{X}$.
		Then $d\varphi\,:\,X_0\, \longrightarrow\,\bigwedge^{k+1}T^*X_0$ is a differential $(k+1)$-form on $\mb{X}$.
	\end{lemma}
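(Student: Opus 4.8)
We must show that if $\varphi \in \Omega^k(\mb{X},\mc{H})$, that is $\widehat{\mc{H}}(s^*\varphi) = \widehat{\mc{H}}(t^*\varphi)$, then $d\varphi$ satisfies the analogous identity $\widehat{\mc{H}}(s^*d\varphi) = \widehat{\mc{H}}(t^*d\varphi)$. Since pullback commutes with exterior derivative, $s^*d\varphi = d(s^*\varphi)$ and $t^*d\varphi = d(t^*\varphi)$, so the claim reduces to showing that on $X_1$ the operators $\widehat{\mc{H}}$ and $d$ "commute enough" to conclude $\widehat{\mc{H}}(d(s^*\varphi)) = \widehat{\mc{H}}(d(t^*\varphi))$ from $\widehat{\mc{H}}(s^*\varphi) = \widehat{\mc{H}}(t^*\varphi)$. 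The natural route is to prove the general fact: for an \emph{integrable} distribution $\mc{H}$ on a manifold and any form $\omega$ on that manifold, $\widehat{\mc{H}}(d\omega)$ depends only on $\widehat{\mc{H}}(\omega)$; equivalently, $\widehat{\mc{H}}(d\omega) = \widehat{\mc{H}}\big(d(\widehat{\mc{H}}(\omega))\big)$, or at least $\widehat{\mc{H}}(\omega)=0 \implies \widehat{\mc{H}}(d\omega)=0$. Applying this to $\omega = s^*\varphi - t^*\varphi$ gives the result by linearity of $\widehat{\mc{H}}$ and $d$.

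First I would set up the local picture: integrability of $\mc{H}$ means (Frobenius) that $X_1$ is locally foliated, with coordinates $(x^1,\dots,x^p,y^1,\dots,y^q)$ in which $\mc{H}$ is spanned by $\partial/\partial x^1,\dots,\partial/\partial x^p$ and $\ker(ds)$ (the complement) by $\partial/\partial y^1,\dots,\partial/\partial y^q$; here $P_{\mc{H}}$ is the coordinate projection killing all $dy^j$. In these coordinates $\widehat{\mc{H}}(\omega)$ is obtained from $\omega$ by deleting every term containing a $dy^j$. Write $\omega = \omega_H + \omega'$ where $\omega_H = \widehat{\mc{H}}(\omega)$ involves only $dx$'s (with coefficients that may still depend on the $y$'s) and $\omega'$ is the sum of terms containing at least one $dy$. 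The key computation is that $d\omega_H = \sum \frac{\partial (\text{coeff})}{\partial x^i} dx^i \wedge (\cdots) + \sum \frac{\partial(\text{coeff})}{\partial y^j} dy^j \wedge (\cdots)$, so $\widehat{\mc{H}}(d\omega_H) = \sum \frac{\partial(\text{coeff})}{\partial x^i}\,dx^i\wedge(\cdots)$ depends only on $\omega_H$; and $d\omega'$ is a sum of terms each still containing at least one $dy^j$ (differentiating a $dx^i$ or $dy^j$ produces nothing, and wedging with $dx$ or $dy$ keeps the $dy$-factor), hence $\widehat{\mc{H}}(d\omega') = 0$. Therefore $\widehat{\mc{H}}(d\omega) = \widehat{\mc{H}}(d\omega_H)$ is determined by $\widehat{\mc{H}}(\omega)$. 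In particular if $\widehat{\mc{H}}(\omega) = 0$ then $\widehat{\mc{H}}(d\omega) = 0$; these local statements glue since $\widehat{\mc{H}}$, $d$ are natural under restriction to open sets.

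To finish, I apply this with the manifold $X_1$, the integrable distribution $\mc{H}\subset TX_1$, and $\omega = s^*\varphi - t^*\varphi$. By hypothesis $\widehat{\mc{H}}(s^*\varphi)=\widehat{\mc{H}}(t^*\varphi)$, so $\widehat{\mc{H}}(\omega)=0$, hence $\widehat{\mc{H}}(d\omega)=0$, i.e. $\widehat{\mc{H}}(d\,s^*\varphi)=\widehat{\mc{H}}(d\,t^*\varphi)$. Since $d\,s^*\varphi = s^*(d\varphi)$ and $d\,t^*\varphi = t^*(d\varphi)$, this says exactly $\widehat{\mc{H}}(s^*(d\varphi))=\widehat{\mc{H}}(t^*(d\varphi))$, which is the condition (Definition \ref{Definition:differentialformonaLiegroupoid}) for $d\varphi$ to be a differential $(k+1)$-form on $\mb{X}$.

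\textbf{Main obstacle.} The only real content is the local claim $\widehat{\mc{H}}(d\omega')=0$ for terms $\omega'$ containing a $dy$-factor; this is where integrability is genuinely used, since for a non-integrable $\mc{H}$ the Frobenius coordinates do not exist and the bracket terms $[\partial_{x^i},\partial_{x^j}]$ would leak into the complement, spoiling the argument. A slightly more invariant phrasing, should one prefer to avoid coordinates, is to use the formula $d\omega(X_0,\dots,X_k) = \sum (-1)^i X_i\,\omega(\dots\widehat{X_i}\dots) + \sum (-1)^{i+j}\omega([X_i,X_j],\dots)$ evaluated on horizontal vector fields $X_0,\dots,X_k \in \Gamma(\mc{H})$: the first sum only sees $\omega$ on horizontal arguments, i.e. $\widehat{\mc{H}}(\omega)$, and the second sum vanishes on $\omega'$ precisely because $[\mc{H},\mc{H}]\subseteq\mc{H}$ by integrability, so every $[X_i,X_j]$ is again horizontal and pairs to zero against a term with a vertical slot. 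Either way, one should be a little careful that the coefficients in $\widehat{\mc{H}}(\omega)$ still depend on the transverse variables, so $\widehat{\mc{H}}$ does \emph{not} commute with $d$ on the nose — only the horizontal part of $d\widehat{\mc{H}}(\omega)$ is controlled — but that is all that is needed.
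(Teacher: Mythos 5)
Your proof is correct, and your ``invariant phrasing'' at the end is in fact exactly the paper's argument: the paper proves the statement (for $k=1$, asserting the general case is verbatim) by evaluating the Koszul formula for $d(s^*\varphi)$ and $d(t^*\varphi)$ on local sections $Z_1,Z_2$ of $\mc{H}$ through the given horizontal vectors and using that $[Z_1,Z_2]$ is again a section of $\mc{H}$ by integrability, together with the hypothesis that $s^*\varphi$ and $t^*\varphi$ agree on horizontal arguments. Your primary route differs in packaging: you first subtract, reducing to the general fact that $\widehat{\mc{H}}(\omega)=0$ implies $\widehat{\mc{H}}(d\omega)=0$ for $\omega=s^*\varphi-t^*\varphi$, and you prove this via Frobenius coordinates, i.e.\ via the statement that the annihilator ideal of an integrable distribution is a differential ideal. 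This buys a uniform treatment of all degrees $k$ and makes the differential-ideal structure explicit, at the cost of invoking the full Frobenius theorem where the paper only needs closure of horizontal vector fields under the Lie bracket. One small imprecision to note: in Frobenius coordinates adapted to $\mc{H}$, the complement $\ker(ds)$ need not be spanned by the $\partial/\partial y^j$, so ``deleting the $dy$-terms'' does not literally compute the operator $\widehat{\mc{H}}$ (whose projection is along $\ker(ds)$). This is harmless for your argument, since the conditions $\widehat{\mc{H}}(\omega)=0$ and $\widehat{\mc{H}}(d\omega)=0$ depend only on the restrictions of $\omega$ and $d\omega$ to tuples of vectors in $\mc{H}$ (because $P_{\mc{H}}$ maps onto $\mc{H}$), and those restrictions are exactly what your coordinate computation controls; still, it would be worth stating the key step purely as ``vanishing on horizontal arguments'' to avoid the appearance of identifying $\ker(ds)$ with the coordinate complement.
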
 
	
	\begin{proof}
		It is enough to prove the case $k\,=\,1$. Indeed, for an arbitrary $k$, the proof runs almost verbatim.
		
		Let $\varphi$ be a differential $1$-form on $\mb{X}=[X_1\rra X_0]$. This means that we have 
		$$(s^*\varphi)(\gamma)(P_{\mc{H}_{\gamma}}(v))\,=\,
		(t^*\varphi)(\gamma)(P_{\mc{H}_{\gamma}}(v))$$ for all $\gamma\,\in\, X_1$ and $v\,\in\, T_{\gamma}X_1$.
		Hence we get
		\begin{equation}\label{fi}
			(s^*\varphi)(\gamma)(v)\,=\, (t^*\varphi)(\gamma)(v)
		\end{equation}
		for all $\gamma\,\in\, X_1$ and $v\,\in\, \mc{H}_{\gamma}$.
		
		To prove the lemma we need to show that
		\begin{equation}\label{fi2}
			d(s^*\varphi)(\gamma)(P_{\mc{H}_{\gamma}}(v_1),\, P_{\mc{H}_{\gamma}}(v_2))
			\,=\, d(t^*\varphi)(\gamma)(P_{\mc{H}_{\gamma}}(v_1),\, P_{\mc{H}_{\gamma}}(v_2))
		\end{equation}
		for all $\gamma\,\in\, X_1$ and $v_1,\,v_2\,\in\, T_{\gamma}X_1$.
		
		Let $v_1,\, v_2\,\in\, \mc{H}_{\gamma}$. Let $Z_1$ (respectively, $Z_2$) be a section of $\mc{H}$ defined
		around $\gamma$ such that $Z_1(\gamma)\,=\, v_1$ (respectively, $Z_2(\gamma)\,=\, v_2$). We have
		\begin{gather}
			d(s^*\varphi)(\gamma)(Z_1,\, Z_2)\,=\, Z_1((s^*\varphi)(Z_2))- Z_2((s^*\varphi)(Z_1))
			- (s^*\varphi)([Z_1,\, Z_2])\nonumber\\
			d(t^*\varphi)(\gamma)(Z_1,\, Z_2)\,=\, Z_1((t^*\varphi)(Z_2))- Z_2((t^*\varphi)(Z_1))
			- (t^*\varphi)([Z_1,\, Z_2])\nonumber
		\end{gather}
		Now, since $[Z_1,\, Z_2]$ is a section of $\mc{H}$, using this and \eqref{fi} we conclude that
		\eqref{fi2} holds.
	\end{proof}
	
	Now let us introduce morphisms of Lie groupoids with connections. 
	
	\begin{definition}
		Let $\bigl(\mb{X}\,=\,[X_1\rra X_0],\, \mc{H}_\mb{X} \bigr)$ and $\bigl(\mb{Y}\,=\,[Y_1\rra Y_0],\, \mc{H}_\mb{Y}\bigr)$
		be a pair of Lie groupoids equipped with connections. A morphism of Lie groupoids
		$(F,\,f)\,\colon\, [X_1\rra X_0]\,\longrightarrow\, [Y_1\rra Y_0]$ will be called 
		a \textit{morphism of Lie groupoids with connections} if 
		$$dF(\mc{H}_\mb{X})\,=\, \mc{H}_\mb{Y}\, ,$$
		where $dF$ is the differential of the map $F$. We shall employ the notation
		$$(F,\,f)\,\colon\, \bigl([X_1\rra X_0],\, \mc{H}_\mb{X}\bigr)\,\longrightarrow\, \bigl([Y_1\rra 
		Y_0],\, \mc{H}_\mb{Y}\bigr)$$ for such a morphism.
	\end{definition}
	
	The following lemma is a straight-forward consequence of the definition.
	
	\begin{lemma}\label{Lemma:pullbackisadifferentialform}
		Let $(F,\,f)\,\colon\, \bigl(\mb{X}=[X_1\rra X_0],\,\mc{H}_\mb{X}\bigr)\,\longrightarrow\,\bigl(\mb{Y}=[Y_1\rra
		Y_0], \,\mc{H}_\mb{Y}\bigr)$ be a morphism of Lie groupoids with connections.
		Let $\varphi\,:\,Y_0\, \longrightarrow\,\bigwedge^kT^*Y_0$ be a differential form on the Lie groupoid
		$\mb{Y}$. Then $f^*\varphi\,:\,X_0\, \longrightarrow\,\bigwedge^kT^*X_0$
		is a differential form on the Lie groupoid $\mb{X}$.
	\end{lemma}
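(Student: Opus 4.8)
The plan is to reduce the assertion to a single commutation identity, namely that the differential $dF$ intertwines the horizontal projections of the two connections, and then to pull back the defining identity satisfied by $\varphi$.

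First I would record the two elementary consequences of $(F,f)$ being a morphism of Lie groupoids: $s_{\mb{Y}}\circ F\,=\,f\circ s_{\mb{X}}$ and $t_{\mb{Y}}\circ F\,=\,f\circ t_{\mb{X}}$. Differentiating the first relation yields $ds_{\mb{Y}}\circ dF\,=\,df\circ ds_{\mb{X}}$, so $dF_\gamma$ carries $\ker(ds_{\mb{X}})_\gamma$ into $\ker(ds_{\mb{Y}})_{F(\gamma)}$ for every $\gamma\in X_1$. Combined with the hypothesis $dF(\mc{H}_{\mb{X}})\,=\,\mc{H}_{\mb{Y}}$, which in particular gives $dF_\gamma(\mc{H}_{\mb{X},\gamma})\subseteq \mc{H}_{\mb{Y},F(\gamma)}$, this shows that $dF_\gamma$ respects the direct sum decompositions $T_\gamma X_1\,=\,\mc{H}_{\mb{X},\gamma}\oplus\ker(ds_{\mb{X}})_\gamma$ and $T_{F(\gamma)}Y_1\,=\,\mc{H}_{\mb{Y},F(\gamma)}\oplus\ker(ds_{\mb{Y}})_{F(\gamma)}$. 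Splitting a vector $v\in T_\gamma X_1$ accordingly and applying $dF_\gamma$ termwise, one gets $P_{\mc{H}_{\mb{Y},F(\gamma)}}\circ dF_\gamma\,=\,dF_\gamma\circ P_{\mc{H}_{\mb{X},\gamma}}$.

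Next I would use this to establish the general intertwining identity $\widehat{\mc{H}_{\mb{X}}}(F^*\psi)\,=\,F^*\bigl(\widehat{\mc{H}_{\mb{Y}}}(\psi)\bigr)$ for every differential $k$-form $\psi$ on the manifold $Y_1$. Indeed, evaluating the left-hand side at $\gamma\in X_1$ on tangent vectors $v_1,\dots,v_k\in T_\gamma X_1$ gives $\psi(F(\gamma))\bigl(dF(P_{\mc{H}_{\mb{X}}}v_1),\dots,dF(P_{\mc{H}_{\mb{X}}}v_k)\bigr)$, while the right-hand side gives $\psi(F(\gamma))\bigl(P_{\mc{H}_{\mb{Y}}}(dF v_1),\dots,P_{\mc{H}_{\mb{Y}}}(dF v_k)\bigr)$; these agree by the commutation just proved. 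Applying this identity with $\psi\,=\,s_{\mb{Y}}^*\varphi$ and $\psi\,=\,t_{\mb{Y}}^*\varphi$, and using $F^*\circ s_{\mb{Y}}^*\,=\,s_{\mb{X}}^*\circ f^*$ and $F^*\circ t_{\mb{Y}}^*\,=\,t_{\mb{X}}^*\circ f^*$ (again consequences of $(F,f)$ being a groupoid morphism), we obtain $\widehat{\mc{H}_{\mb{X}}}(s_{\mb{X}}^* f^*\varphi)\,=\,F^*\bigl(\widehat{\mc{H}_{\mb{Y}}}(s_{\mb{Y}}^*\varphi)\bigr)$ and $\widehat{\mc{H}_{\mb{X}}}(t_{\mb{X}}^* f^*\varphi)\,=\,F^*\bigl(\widehat{\mc{H}_{\mb{Y}}}(t_{\mb{Y}}^*\varphi)\bigr)$. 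Since $\varphi$ is a differential form on $\mb{Y}$, Definition~\ref{Definition:differentialformonaLiegroupoid} gives $\widehat{\mc{H}_{\mb{Y}}}(s_{\mb{Y}}^*\varphi)\,=\,\widehat{\mc{H}_{\mb{Y}}}(t_{\mb{Y}}^*\varphi)$, and applying $F^*$ to this equality shows $\widehat{\mc{H}_{\mb{X}}}(s_{\mb{X}}^* f^*\varphi)\,=\,\widehat{\mc{H}_{\mb{X}}}(t_{\mb{X}}^* f^*\varphi)$, i.e. $f^*\varphi\in\Omega^k(\mb{X},\mc{H}_{\mb{X}})$.

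The only step that is not purely formal is the first one: the fact that $dF$ preserves the vertical subbundles $\ker(ds)$, which is exactly where the hypothesis that $(F,f)$ is a morphism of Lie groupoids (rather than an arbitrary smooth map) is used, together with the hypothesis that it preserves the connections. Once $dF$ is known to commute with the horizontal projections, the remainder is bookkeeping with pullbacks of forms and the definition of the horizontal component, with no integrability assumption required.
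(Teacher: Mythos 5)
Your proposal is correct, and it is precisely the argument the paper has in mind: the paper states this lemma as "a straight-forward consequence of the definition" without writing it out, and your proof supplies exactly the expected details — $dF$ preserves $\ker(ds)$ because $(F,f)$ is a groupoid morphism, hence intertwines the horizontal projections, so $\widehat{\mc{H}_\mb{X}}(F^*\psi)=F^*\widehat{\mc{H}_\mb{Y}}(\psi)$, and pulling back the defining identity for $\varphi$ via $F^*s_{\mb{Y}}^*=s_{\mb{X}}^*f^*$ and $F^*t_{\mb{Y}}^*=t_{\mb{X}}^*f^*$ finishes the proof.
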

	
	When the morphism $F$ in Lemma \ref{Lemma:pullbackisadifferentialform} is a surjective submersion,
	the following converse also holds.
	
	\begin{lemma}\label{Lemma:pullbackofadifferentialform}
		Let $(F,\,f)\,\colon\, \bigl(\mb{X}=[X_1\rra X_0],\, \mc{H}_\mb{X}\bigr)\,\longrightarrow\, \bigl(\mb{Y}=[Y_1\rra Y_0],\, \mc{H}_\mb{Y}\bigr)$
		be a morphism of Lie groupoids with connection such that $F$ is a surjective submersion. Let $\varphi$
		be a differential $k$-form on $Y_0$ such that $f^*\varphi$ is a differential $k$-form on the Lie groupoid $\mb{X}$.
		Then $\varphi$ is a differential form on the Lie groupoid $\mb{Y}$.
	\end{lemma}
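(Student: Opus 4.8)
The plan is to reduce both the hypothesis and the conclusion to pointwise identities on horizontal tangent vectors, and then to transfer the hypothesis from $X_1$ to $Y_1$ by lifting along $F$. Throughout I would write $s_{\mb X},t_{\mb X}$ and $s_{\mb Y},t_{\mb Y}$ for the source and target maps of $\mb X$ and $\mb Y$, so that functoriality of $(F,f)$ gives $s_{\mb Y}\circ F=f\circ s_{\mb X}$ and $t_{\mb Y}\circ F=f\circ t_{\mb X}$, and hence after differentiation $ds_{\mb Y}\circ dF=df\circ ds_{\mb X}$ and $dt_{\mb Y}\circ dF=df\circ dt_{\mb X}$.

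First I would observe, from Definition~\ref{Definition:differentialformonaLiegroupoid}, that a $k$-form $\varphi$ on $Y_0$ is a $k$-form on $\mb Y$ precisely when, for every $\eta\in Y_1$, the forms $(s_{\mb Y}^*\varphi)(\eta)$ and $(t_{\mb Y}^*\varphi)(\eta)$ agree on every $k$-tuple of vectors lying in $\mc{H}_{\mb Y,\eta}$: indeed $\widehat{\mc{H}_{\mb Y}}$ records only the values of a form on horizontal vectors (where $P_{\mc{H}_\eta}$ acts as the identity, and $P_{\mc{H}_\eta}$ has image $\mc{H}_\eta$), and two alternating $k$-forms that agree on all $k$-tuples from a subspace restrict to the same form on that subspace. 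Next I would unwind the hypothesis that $f^*\varphi$ is a $k$-form on $\mb X$ in the same fashion: using $(s_{\mb X}^*f^*\varphi)(\gamma)(v)=\varphi(f(s_{\mb X}\gamma))(df\,ds_{\mb X}v)=\varphi(s_{\mb Y}(F\gamma))(ds_{\mb Y}\,dF v)$ and the analogous identity for $t$, it says that for every $\gamma\in X_1$ and all $v_1,\dots,v_k\in\mc{H}_{\mb X,\gamma}$,
\[
\varphi\bigl(s_{\mb Y}(F(\gamma))\bigr)\bigl(ds_{\mb Y}(dF v_1),\dots,ds_{\mb Y}(dF v_k)\bigr)
=\varphi\bigl(t_{\mb Y}(F(\gamma))\bigr)\bigl(dt_{\mb Y}(dF v_1),\dots,dt_{\mb Y}(dF v_k)\bigr).
\]

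Then I would carry out the lifting step. Fix $\eta\in Y_1$ and $w_1,\dots,w_k\in\mc{H}_{\mb Y,\eta}$. Since $F$ is a surjective submersion there is $\gamma\in X_1$ with $F(\gamma)=\eta$, and since $(F,f)$ is a morphism of Lie groupoids with connections we have $dF_\gamma(\mc{H}_{\mb X,\gamma})=\mc{H}_{\mb Y,\eta}$, so each $w_i$ may be written $w_i=dF_\gamma(v_i)$ with $v_i\in\mc{H}_{\mb X,\gamma}$. Substituting this $\gamma$ and these $v_i$ into the displayed identity above yields
\[
\varphi(s_{\mb Y}(\eta))\bigl(ds_{\mb Y}w_1,\dots,ds_{\mb Y}w_k\bigr)
=\varphi(t_{\mb Y}(\eta))\bigl(dt_{\mb Y}w_1,\dots,dt_{\mb Y}w_k\bigr),
\]
which is exactly the pointwise condition identified in the first step. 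As $\eta$ and the $w_i$ were arbitrary, this gives $\widehat{\mc{H}_{\mb Y}}(s_{\mb Y}^*\varphi)=\widehat{\mc{H}_{\mb Y}}(t_{\mb Y}^*\varphi)$, i.e.\ $\varphi$ is a $k$-form on $\mb Y$.

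The argument is essentially a diagram chase producing the converse of Lemma~\ref{Lemma:pullbackisadifferentialform}, and no analytic input beyond the submersion hypothesis enters. The one point to handle with care is the lifting step: one must lift not only the arrow $\eta$ to an arrow $\gamma$ of $\mb X$ (which is where surjectivity of $F$ is used) but also each horizontal tangent vector $w_i$ at $\eta$ to a horizontal tangent vector at $\gamma$ — and it is precisely the defining equality $dF(\mc{H}_{\mb X})=\mc{H}_{\mb Y}$ for a morphism of Lie groupoids with connections, read fiberwise as surjectivity of $dF_\gamma\colon\mc{H}_{\mb X,\gamma}\to\mc{H}_{\mb Y,\eta}$, that makes this possible.
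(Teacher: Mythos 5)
Your proof is correct and takes essentially the same route as the paper's: the paper phrases the transfer from $X_1$ to $Y_1$ as injectivity of $F^*$ on differential forms on $Y_1$ (valid because $F$ is a surjective submersion), while you carry out the equivalent pointwise lifting of horizontal vectors along $dF$, and both arguments hinge on the same surjectivity together with the compatibility $dF(\mc{H}_{\mb X})=\mc{H}_{\mb Y}$. Your version has the merit of making explicit the commutation of the horizontal projection with $F^*$, which the paper's two-line proof leaves implicit.
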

	
	\begin{proof}
		Since $F$ is a surjective submersion, for any two differential $k$-forms $\omega_1$ and $\omega_2$ on $Y_1$,
		if $F^*\omega_1\,=\, F^*\omega_2$, then $\omega_1\,=\, \omega_2$. The lemma now follows from this.
	\end{proof}
	
	Let $\mb{X}\,=\,[X_1\rra X_0]$ now be a Lie groupoid equipped with an integrable connection $\mc{H}\,\subset\, 
	TX_1$. Let ${\Omega}^k(\mb{X},\mc{H})$ denote the vector space of differential $k$-forms on the Lie groupoid $\mb{X}$,
	with respect to $\mc{H}$. In view of Lemma \ref{differentialisdifferentialform}
	we have a cochain complex $\Omega^\bullet(\mb{X},\,\mc{H})$ given by
	\[\cdots \,\longrightarrow\, {\Omega}^{k-1}(\mb{X},\,\mc{H})\,\longrightarrow\, {\Omega}^k(\mb{X},\,\mc{H})
	\,\longrightarrow\, {\Omega}^{k+1}(\mb{X},\,\mc{H})\,\longrightarrow\, \cdots .\]
	The $k^{th}$-cohomology group of this cochain complex, denoted by $H^k_{dR}(\mb{X},\mc{H})$, is called the
	\textit{$k^{th}$-de Rham cohomology of the pair} $(\mb{X},\,\mc{H})$ and we have the
	\textit{de Rham cohomology ring of the pair} $(\mb{X},\, \mc{H})$ by setting
	\begin{equation}\label{cohomologyofLiegroupoid}
		H_{\rm dR}^*(\mb{X},\,\mc{H})\, :=\, \bigoplus_{k=0}^{\dim X_0} H^k_{\rm dR}(\mb{X},\,\mc{H})\, .
	\end{equation}
	Also, there is a natural homomorphism $H_{\rm dR}^*(\mb{X},\,\mc{H})\,\longrightarrow\, H_{\rm dR}^*(X_0,
	\, {\mathbb R})$ which is, in general, neither injective nor surjective.
	
	\begin{remark}\label{Rem:Integracrucial}
		It should be noted that differential forms are defined on any Lie groupoid equipped with a connection.
		However, the connection needs to be integrable in order to be able to define the exterior derivative.
		A different definition of the de Rham cohomology of a Lie groupoid was introduced by
		Laurent-Gengoux, Tu and Xu, \cite{MR2270285},
		using the simplicial manifold given by the nerve associated to the Lie groupoid
		(compare also with \cite{{D2},{MR2183389}, {MR2817778},{FN}}).
	\end{remark}
	
	\begin{example}
		Let $M$ be a smooth manifold and $\mb{X}\,=\,[M\rra M]$ be the associated Lie groupoid. Fix the connection 
		$\mc{H}_mM\,=\,T_mM$ as in Example \ref{Example:connectionOnMM}. Then the $k$-th de Rham cohomology
		group of the pair $(\mb{X},\,\mc{H})$ is isomorphic to the $k$-th de Rham cohomology group $H^k_{\rm dR}(M)$ of 
		$M$ and so we have $H^k_{\rm dR}(\mb{X},\,\mc{H})\,\cong\, H^k_{\rm dR}(M)$.
	\end{example}
	
	\begin{example}\label{Ex:Liegrpcohom}
		Let $G$ be a Lie group acting on a smooth manifold $M$. Let $\mb{X}\,=\,[M\times G\rra M]$ be the associated action 
		Lie groupoid. In particular, the source and target maps respectively are given by ${\rm pr}_1\,\colon\, (m,\, 
		g)\,\longmapsto\, m$ and $\mu\,\colon\, (m,\, g)\,\longmapsto\, m\cdot g.$ Define a connection $\mc{H}_{(m,g)}(M\times G)\,=\,T_mM$ as in Example \ref{Example:connectionOnMG}. Then, a differential $k$-form $\omega$ on 
		$\mb{X}$ is a differential form on $M$ satisfying the condition $\widehat{\mc{H}}({\rm pr}_1^*\omega)(m,\, g)=\widehat{\mc{H}}(\mu^* 
		\omega)(m,\, g)$ for all $(m,\,g)\,\in\, M\times G$. That means, $\omega(m)\,=\,({R_g}^*\omega)(m),$ where ${R_g}^*$ is the induced action of $G$ on the 
		space of differential forms. Thus the space of differential forms of the Lie groupoid $\mb{X}\,=\,[M\times G\rra 
		M]$ with connection $\mc{H}_{(m, g)}\,=\,T_m M$ is the space of $G$-invariant differential forms on $M$. In turn, 
		we have $H^k_{\rm dR}(\mb{X},\,\mc{H})\,=\, H^k(\Omega^\bullet(M)^G)$, where $H^k(\Omega^\bullet(M)^G)$ denotes the 
		cohomology corresponding to the $G$-invariant differential forms. In particular if $G$ is compact and connected 
		then, by a classical theorem of E. Cartan, the inclusion map $\Omega^\bullet(M)^G\,\longrightarrow\, 
		\Omega^\bullet(M)$ induces an isomorphism $H^*(\Omega^\bullet(M)^G)\,\longrightarrow\, H^*(M)$ of cohomology rings 
		and thus we have $H^k_{\rm dR}(\mb{X},\,\mc{H})\,\cong\, H^k(M)$. Note that the simplicial de Rham cohomology 
		associated to the simplicial nerve of the action Lie groupoid has been computed in \cite{BehCoh, MR2817778}. In 
		contrast, this simplicial de Rham cohomology of $\mb{X}$ is isomorphic to the whole $G$-equivariant cohomology 
		$H^*_G(M)$ of $M$. See Subsection~\ref{SS:Comparisonsimplicial} for a brief comparison between our approach to de 
		Rham cohomology of Lie groupoids with connections and that of Behrend \cite{BehCoh} and Behrend-Xu 
		\cite{MR2817778}.
	\end{example}
	
	\begin{example}\label{Example:Gaugegrouoidderahm}
		Let $\pi\,\colon\,\,P\,\longrightarrow\, M$ be a principal bundle and $A$ a connection on it. Consider the
		Lie groupoid ${\mb{P}_{\rm Gauge}}\,=\,[\frac{P\times P}{G}\rra M]$ and the connection $\mc{H}$ on it, as
		described in Example~\ref{Example:Gaugegrouoidconnection}. It is not difficult to verify that an
		$\alpha\,\in \,\Omega^k(M),\ 0\,\leq\, k$, is a differential form on ${\mb{P}_{\rm Gauge}}$ if it satisfies
		the condition $\pi^*\alpha(p)\,=\,\pi^*\alpha(q)$ for all $p,\, q\,\in\, P.$ In particular if $\alpha
		\,\in\, \Omega^0(M),$ the condition says that $\pi^*\alpha\,=\,\alpha \circ \pi$ is a constant
		function (and hence by surjectivity of $\pi,$ $\alpha$ as well). For 
		$k\,>\,0$ we have $\Omega^k({\mb{P}_{\rm Gauge}} ,\, \mc{H})\,=\,\{\alpha\,\in\, \Omega^k(M)\,\mid\,\pi^*\alpha
		\,=\,0\}.$ In turn when the connection is integrable, we get 
		\begin{equation}\nonumber
			\begin{split}
				&H^0_{\rm dR}(\mb{P}_{\rm Gauge},\,\mc{H})\,=\mb{R},\\
				&H^1_{\rm dR}(\mb{P}_{\rm Gauge},\,\mc{H})\,=\{\alpha\in \Omega^1(M)\,\mid\,\pi^*\alpha\,=\,0,\ d \alpha\,=\,0\},
			\end{split}
		\end{equation}
		and so on for the higher cohomology groups.
	\end{example}
	
	We end this section with a result regarding the cohomology ring homomorphism associated with a morphism of Lie groupoids with connection.
	The following proposition is deduced using Lemma \ref{differentialisdifferentialform}.
	
	\begin{proposition}\label{prop:pullbackderham}
		Let $\mc{H}_\mb{X}$ and $\mc{H}_\mb{Y}$ be integrable connections on the Lie groupoids $\mb{X}\,=\,[X_1\rra 
		X_0]$ and $\mb{Y}\,=\, [Y_1\rra Y_0]$ respectively. Then a morphism of Lie groupoids with connections
		$$(F,\,f)\,\colon\, \bigl(\mb{X} =[X_1\rra X_0],\, \mc{H}_\mb{X}\bigr)\,\longrightarrow\,
		\bigl(\mb{Y}=[Y_1\rra Y_0],\, \mc{H}_\mb{Y}\bigr)$$ induces a homomorphism of de Rham cohomology 
		groups $$H^k_{\rm dR}(\mb{Y},\,\mc{H}_\mb{Y})\,\longrightarrow\, H^k_{\rm dR}(\mb{X},\,\mc{H}_\mb{X})$$
		that sends any $[\alpha]$ to $[f^*\alpha]$. These
		maps produce a morphism of graded $\mb{R}$-algebras $H^*_{\rm dR}(\mb{Y},\,\mc{H}_\mb{Y})\,\longrightarrow\, 
		H^*_{\rm dR}(\mb{X},\,\mc{H}_\mb{X})$.
	\end{proposition}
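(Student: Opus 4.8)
The plan is to show that the ordinary pullback of differential forms along $f\colon X_0\,\longrightarrow\, Y_0$ restricts to a cochain map between the complexes $\Omega^\bullet(\mb{Y},\,\mc{H}_\mb{Y})$ and $\Omega^\bullet(\mb{X},\,\mc{H}_\mb{X})$, and is moreover multiplicative, so that it descends to a homomorphism of graded $\mb{R}$-algebras on de Rham cohomology. First I would invoke Lemma \ref{Lemma:pullbackisadifferentialform}: since $(F,\,f)$ is a morphism of Lie groupoids with connections, whenever $\alpha$ is a differential $k$-form on $\mb{Y}$ the form $f^*\alpha$ is a differential $k$-form on $\mb{X}$, so pullback gives for every $k$ a linear map $f^*\colon \Omega^k(\mb{Y},\,\mc{H}_\mb{Y})\,\longrightarrow\, \Omega^k(\mb{X},\,\mc{H}_\mb{X})$. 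Because both connections are integrable, Lemma \ref{differentialisdifferentialform} guarantees that $\Omega^\bullet(\mb{Y},\,\mc{H}_\mb{Y})$ and $\Omega^\bullet(\mb{X},\,\mc{H}_\mb{X})$ are genuine cochain complexes under the exterior derivative; combining this with the classical identity $d\circ f^*\,=\,f^*\circ d$ for forms, the family of maps $f^*$ is a morphism of cochain complexes, and hence induces well defined homomorphisms $H^k_{\rm dR}(\mb{Y},\,\mc{H}_\mb{Y})\,\longrightarrow\, H^k_{\rm dR}(\mb{X},\,\mc{H}_\mb{X})$ sending $[\alpha]$ to $[f^*\alpha]$.

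Next I would address the ring structure. One first checks that $\Omega^\bullet(\mb{X},\,\mc{H}_\mb{X})$ is closed under the exterior product. The horizontal-component operator $\widehat{\mc{H}}$ of Definition \ref{Definition:Horizontalcomponentofadifferentialform} is built pointwise from the projections $P_{\mc{H}_\gamma}$ and therefore commutes with $\wedge$, i.e.\ $\widehat{\mc{H}}(\varphi\wedge\psi)\,=\,\widehat{\mc{H}}(\varphi)\wedge\widehat{\mc{H}}(\psi)$, while $s^*$ and $t^*$ are algebra homomorphisms; so if $\alpha,\,\beta\,\in\,\Omega^\bullet(\mb{X},\,\mc{H}_\mb{X})$, then
\[
\widehat{\mc{H}}(s^*(\alpha\wedge\beta))\,=\,\widehat{\mc{H}}(s^*\alpha)\wedge\widehat{\mc{H}}(s^*\beta)\,=\,\widehat{\mc{H}}(t^*\alpha)\wedge\widehat{\mc{H}}(t^*\beta)\,=\,\widehat{\mc{H}}(t^*(\alpha\wedge\beta)),
\]
so $\alpha\wedge\beta$ is again a form on $\mb{X}$. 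Together with the Leibniz rule for $d$, this makes $\Omega^\bullet(\mb{X},\,\mc{H}_\mb{X})$ a differential graded algebra, and hence equips $H^*_{\rm dR}(\mb{X},\,\mc{H}_\mb{X})$ of \eqref{cohomologyofLiegroupoid} with its cup-product ring structure, and likewise for $\mb{Y}$. Since ordinary pullback satisfies $f^*(\alpha\wedge\beta)\,=\,f^*\alpha\wedge f^*\beta$, the cochain map constructed above is a morphism of differential graded algebras, and therefore the induced map on cohomology is a homomorphism of graded $\mb{R}$-algebras $H^*_{\rm dR}(\mb{Y},\,\mc{H}_\mb{Y})\,\longrightarrow\, H^*_{\rm dR}(\mb{X},\,\mc{H}_\mb{X})$.

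There is no serious obstacle here: the proposition is essentially formal once Lemmas \ref{differentialisdifferentialform} and \ref{Lemma:pullbackisadifferentialform} are in hand. The only step that genuinely uses the hypothesis $dF(\mc{H}_\mb{X})\,=\,\mc{H}_\mb{Y}$ is the compatibility $\widehat{\mc{H}_\mb{X}}\circ F^*\,=\,F^*\circ \widehat{\mc{H}_\mb{Y}}$ on forms on $Y_1$, which rests on the observation that $dF$ carries $\ker(ds_\mb{X})$ into $\ker(ds_\mb{Y})$ (immediate from $ds_\mb{Y}\circ dF\,=\,df\circ ds_\mb{X}$) as well as $\mc{H}_\mb{X}$ into $\mc{H}_\mb{Y}$, so that it intertwines the projections $P_{\mc{H}}$; but this is exactly what underlies Lemma \ref{Lemma:pullbackisadifferentialform}, which we may simply quote. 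Thus the write-up amounts to assembling these ingredients together with the standard naturality of $d$ and $\wedge$ under pullback.
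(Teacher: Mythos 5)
Your proof is correct and follows essentially the same route as the paper's: both rest on Lemma \ref{Lemma:pullbackisadifferentialform} to see that $f^*$ carries forms on $\mb{Y}$ to forms on $\mb{X}$, and on Lemma \ref{differentialisdifferentialform} together with the identity $d\circ f^*=f^*\circ d$ to see that the map is well defined on cohomology classes. The only genuine addition is your check that $\widehat{\mc{H}}$ commutes with $\wedge$ (valid, since $\widehat{\mc{H}}$ is pointwise pullback along the linear projection $P_{\mc{H}_\gamma}$), so that $\Omega^\bullet(\mb{X},\,\mc{H}_\mb{X})$ is closed under the exterior product; the paper asserts the graded $\mb{R}$-algebra statement without verifying this, so your extra paragraph fills a small gap rather than taking a different approach.
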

	
	\begin{proof} Let $[\alpha]\,\in \,H^k_{\rm dR}(\mb{Y},\,\mc{H}_\mb{Y}).$ In particular, that means that $\alpha\,
		\in \,\Omega^{k}(\mb{Y})$ and $0\,=\,d\alpha \,\in\, \Omega^{k+1}(\mb{Y}).$ Then, using 
		Lemma~\ref{Lemma:pullbackisadifferentialform} we have that $f^*\alpha \,\in\, \Omega^{k}(\mb{X})$ and $0\,=
		\,d f^*\alpha \,\in\,
		\Omega^{k+1}(\mb{X}).$ Also Lemma \ref{differentialisdifferentialform} implies that
		$df^*\theta \,\in\, \Omega^{k}(\mb{Y})$ for any $\theta\,\in\, 
		\Omega^{k-1}(\mb{X})$. Thus we have $[f^*\alpha]\,=\,[f^*(\alpha+d\theta)].$
	\end{proof}
	
	\subsection{Simplicial de Rham cohomology of a Lie groupoid}\label{SS:Comparisonsimplicial}
	
	In \cite{MR2817778} Behrend and Xu introduced the notion of de Rham cohomology for Lie groupoids, combining the construction of \v{C}ech cohomology and the de Rham cohomology of differential forms, relying on the simplicial manifold associated to the groupoid nerve. Our definition (and the motivation behind it) of the de Rham cohomology for Lie groupoids with connections is very much different. First of all, we introduce the notion of a connection on 
	a Lie groupoid to study Atiyah sequences over a Lie groupoid (see the discussion around \eqref{AtiyahforE_G}). A 
	natural consequence for the existence of a connection on a Lie groupoid is the definition of differential forms as introduced in Definition~\ref{Definition:differentialformonaLiegroupoid}, and the associated de Rham cohomology. In 
	Section \ref{Section:DifferentialformassociatedtoConnections} this definition will allow us to describe connections 
	on principal bundles over Lie groupoids as differential forms and subsequently to develop the associated Chern-Weil 
	theory and theory of characteristic classes (see Section~\ref{Section:CWmapforprincipalbundleoverLiegroupoid}). 
	More succinctly, one may say the groupoid structure is exploited in our model by taking into account the connection on the Lie groupoid (see condition 3 in Definition~\ref{Definition:connectiononLiegroupoid}), not through the groupoid nerves. However, our framework can be adopted for the set-up of simplicial smooth manifolds, which will be pursued in detail in follow-up work \cite{BCKN2}. Here we will give a brief outline of the constructions. For this, 
	let us recall the constructions in \cite{MR2817778} and compare them with ours.
	
	Let $\mb{X}\,=\,[X_1\rra X_0]$ be a Lie groupoid. Since the source and target maps are both smooth submersions, for
	each $i\geq 0$ we obtain a smooth manifold $X_i$ by setting
	$$X_i\,:=\,\underbrace{X_1\times_{X_0}\cdots\times_{X_0}X_1}_{i-{\rm times}}.$$
	For each $i\geq 1$ there are $i+1$ canonical maps ${\rm pr}_{i, \alpha}\colon X_i\ra X_{i-1}$ given by 
	\begin{equation}\label{eq:maps}
		\begin{split}
			&{\rm pr}_{i, 1}\,\colon\, (\phi_1, \cdots, \phi_{i})\,\longmapsto\, (\phi_2, \cdots, \phi_{i}),\\
			&{\rm pr}_{i, 2}\,\colon\, (\phi_1, \cdots, \phi_{i})\,\longmapsto\, (\phi_1, \cdots, \phi_{i-1}),\\
			&{\rm pr}_{i, \alpha}\,\colon\,(\phi_1, \cdots, \phi_{i})\,\longmapsto\, (\phi_1, \cdots, \phi_{\alpha-2}
			\circ \phi_{\alpha-1}, \cdots, \phi_{i}),\,\ 3\leq \alpha\leq i+1.		
		\end{split}
	\end{equation}
	
	In turn we get a simplicial manifold 
	\begin{equation}\nonumber
		\xymatrix{
			X_\bullet\,=\,\biggl\{\ldots\ar@<-1.5ex>[r]\ar@<-.5ex>[r]\ar@<1.5ex>[r]\ar@<.5ex>[r] & X_2
			\ar[r]\ar@<1ex>[r]\ar@<-1ex>[r] & X_1\ar@<-.5ex>[r]\ar@<.5ex>[r]
			&X_0\,}\biggr\}.
	\end{equation}
	
	By defining the differentials
	\begin{equation}\label{Eq:horizdiff}
		\partial_i\,:=\,\sum_{\alpha=1}^{i+2}(-1)^{\alpha+1}{\rm pr}^{*}_{{i+1}, \alpha}\colon \Omega^p(X_i)\ra \Omega^p(X_{i+1}),
	\end{equation}
	we obtain a cochain complex 
	$$
	\xymatrix{
		\Omega^p(X_0)\,\rto^{\partial_{0}}\,&\Omega^p(X_1)\,\rto^{\partial_1}\,&\Omega^p(X_2)\,\rto^{\partial_2}\, & \ldots}.
	$$
	and then the usual exterior derivatives of differential forms yield the following double complex
	\begin{equation}\label{Eq:doublecom}
		\begin{matrix}
			\vdots & {} & \vdots & {} &\vdots & {} & {}\\
			\Omega^p(X_0) & \stackrel{\partial}{\longrightarrow}& \Omega^p(X_1)& \stackrel{\partial}{\longrightarrow}& \Omega^p(X_2) & \stackrel{\partial}{\longrightarrow}& \cdots\\
			\Big\downarrow{ d} & {}& \Big\downarrow{d} & {}	& \Big\downarrow{d} & {} & {}\\
			\Omega^{p+1}(X_0) & \stackrel{\partial}{\longrightarrow}& \Omega^{p+1}(X_1)& \stackrel{\partial}{\longrightarrow}& \Omega^{p+1}(X_2) & \stackrel{\partial}{\longrightarrow}& \cdots \\
			\vdots & {} & \vdots & {} &\vdots & {} & {}
		\end{matrix}
	\end{equation}
	With the differential $\delta \omega\,=\, \partial \omega+(-1)^i d \omega$ for any $\omega\,\in\, \Omega^p(X_i),$ we
	obtain the total complex 
	\begin{equation}\label{eq:totcom}
		C^{n} (\mb{X})\,=\,\bigoplus_{p+i=n} \Omega^p(X_i).
	\end{equation} 
	The total complex in \eqref{eq:totcom}
	and the corresponding cohomology have been respectively called \textit{de Rham complex} and \textit{de Rham cohomology} of $\mb{X}$ in \cite{MR2817778}.	Here to avoid any conflict of terminologies we will refer to them respectively as 	\textit{simplicial de Rham complex} and \textit{simplicial de Rham cohomology} of $\mb{X}$. The contrast between the two approaches can be further highlighted by considering Example~\ref{Ex:Liegrpcohom} again. We have computed the de Rham cohomology of the action Lie groupoid 
	$\mb{X}\,=\,[M\times G\rra M]$ for the connection $\mc{H}_{(m,g)}\,=\,T_mM,$ which is given as 	 
	$H^k(\Omega^\bullet(M)^G)$, the 
	cohomology corresponding to the $G$-invariant differential forms. The simplicial de Rham cohomology of the action 
	Lie groupoid $\,[M\times G\rra M]$ instead turns out to be the $G$-equivariant cohomology of the smooth manifold 
	$M$ (compare for example \cite{BehCoh}, \cite{MR2817778}).
	
	Now suppose the Lie groupoid $\mb{X}\,=\,[X_1\rra X_0]$ admits an integrable connection $\mc{H}
	\,\subset\, TX_1.$ Thus we have a splitting $\mc{H}\oplus \ker(ds)\, =\, T X_1$ as in \eqref{tsd}. Let
	$T\mb{X}\,=\,[T X_1\rra T X_0]$ be the associated tangent groupoid. Furthermore (by $\mathrm{(3.4)}$ and
	Definition \ref{Definition:Horizontalcomponentofadifferentialform})
	$\mc{H}$ defines a smooth splitting of $\Omega^p{(X_1)}$ into `horizontal' and `vertical' components,
	\begin{equation}\nonumber
		\Omega^p{(X_1)}\,=\,\Omega_{\rm Hor}^p{(X_1)}\oplus \Omega_{\rm Ver}^p{(X_1)}.
	\end{equation}
	
	We identify $T(X_1\times_{X_0}\times\cdots \times_{X_0} X_1)$ with $TX_1\times_{TX_0}\cdots \times_{TX_0}TX_1$ and
	get a pullback diagram
	\[
	\xy \xymatrix{ {TX_1\times_{TX_0}TX_1}{\ar[r]^{\hspace*{0.8cm}d{\rm pr}_{1 }}} \ar[d]_{d{\rm pr}_{2}}&
		TX_1\ar[d]^{dt}\\
		TX_1\ar[r]^{ds}& TX_0.}
	\endxy
	\]
	Similarly, $\Omega^p(X_1\times_{X_0}\times\cdots \times_{X_0} X_1)$ is identified with $\Omega^p X_1\oplus_{\Omega^pX_0}\cdots \oplus_{\Omega^pX_0}\Omega^pX_1$ given by the pushout diagrams
	\[
	\xy \xymatrix{\Omega^pX_0 {\ar[r]^{s^*}} \ar[d]_{t^*}&
		\Omega^pX_1\ar[d]^{i_1}\\
		\Omega^pX_1 {\ar[r]^{\hspace{-1.8cm}i_2}}& {\Omega^p(X_1)\oplus_{\Omega^p(X_0)} \Omega^p(X_1)} ,}
	\endxy
	\]
	where we set $$\Omega^p(X_1)\oplus_{\Omega^p(X_0)} \Omega^p(X_1)\,:=\,(\Omega^p(X_1)\oplus \Omega^p(X_1))/\sim$$ 
	with $(\alpha,\, \beta)\,\sim\, (\alpha+s^*(\theta),\, \beta-t^*(\theta))$ for any $\theta\,\in\, {\Omega^p(X_0)}.$
	
	Consider the subspace
	$${\ker (ds)}\times_{TX_0}{\ker (ds)} \,\subset\, {TX_1\times_{TX_0}TX_1}
	\,\simeq\, T(X_1\times_{X_0}X_1)\,=\,TX_2$$ and the differentials of the projection maps
	${\rm pr}_{2, \alpha}\,\colon\, X_2\,\longrightarrow\, X_{1}$ 
	\begin{equation}\label{eq:X1toX2}
		\begin{split}
			&{\rm pr}_{2, 1}\,\colon\, (\phi_1, \,\phi_{2})\,\longmapsto\, \phi_1,\\
			&{\rm pr}_{2, 2}\,\colon\, (\phi_1, \,\phi_2)\,\longmapsto \,\phi_2,\\
			&{\rm pr}_{3, 3}\,\colon\,(\phi_1,\, \phi_{2})\,\longmapsto\, \phi_1\circ \phi_2,		
		\end{split}
	\end{equation}
	restricted to the subspace ${\ker (ds)}\times_{TX_0}{\ker (ds)}.$ Note that since
	$ds(u\circ v)\,=\,ds(v)$, the kernel ${\ker (ds)}\subset {TX_1}$ is closed under composition, that is, if
	$(u,\, v)\,\in\, {\ker ds}\times_{TX_0}{\ker (ds)}$, then $u\circ v\, \in \, {\ker (ds)}.$ This means that if we
	restrict the differentials of each maps in \eqref{eq:X1toX2} to the subspace ${\ker (ds)}\times_{TX_0}{\ker (ds)}$,
	then we get their images in ${\ker (ds)}$, so for each $1\,\leq\, \alpha\,\leq\, i+1$ we have
	$$d{\rm pr}_{2, \alpha}\,\colon\, {\ker (ds)}\times_{TX_0}{\ker (ds)}\,\longrightarrow\,
	{\ker (ds)}.$$
	
	The above argument evidently generalizes by iteration to 
	$$d{\rm pr}_{i, \alpha}\,\colon\,\underbrace{\ker (ds)\times_{TX_0}\cdots\times_{TX_0}\ker (ds)}_{i-{\rm times}}
	\,\longrightarrow\,\underbrace{\ker (ds)\times_{TX_0}\cdots\times_{TX_0}\ker (ds)}_{(i-1)-{\rm times}}$$ for all $i\geq 2$ and $1\leq \alpha\leq i+1$. For each $p\geq 0,$ a dual argument gives the maps 
	$${\rm pr}^*_{i, \alpha}\,\colon\, \underbrace{\Omega_{\rm Ver}^p{(X_1)} \oplus_{\Omega^p(X_0)}\cdots\oplus_{\Omega^p(X_0)}\Omega_{\rm Ver}^p{(X_1)}}_{(i-1)-{\rm times}}\,\longrightarrow\,\underbrace{\Omega_{\rm Ver}^p{(X_1)} \oplus_{\Omega^p(X_0)}\cdots\oplus_{\Omega^p(X_0)}\Omega_{\rm Ver}^p{(X_1)}}_{i-{\rm times}}$$ for all $i\geq 2$ and $1\leq \alpha\leq i+1$. We introduce the notation
	$$\underbrace{\Omega_{\rm Ver}^p{(X_1)} \oplus_{\Omega^p(X_0)}\cdots\oplus_{\Omega^p(X_0)}
		\Omega_{\rm Ver}^p{(X_1)}}_{i-{\rm times}} := \Omega_{\rm Ver}^p{(X_i)}\subset \Omega^p(X_i),\ i\,\geq\, 1.$$
	This means that the maps defined in \eqref{Eq:horizdiff} preserve the `vertical spaces',
	$$\partial_{i-1}\,\colon\,\Omega_{\rm Ver}^p{(X_{i-1})}\,\longrightarrow\,\Omega_{\rm Ver}^p{(X_i)}$$
	for all $i\,\geq\, 2.$ Now observe that by Definition \ref{Definition:differentialformonaLiegroupoid}, the
	map $$\partial_0\,=\,s^*-t^*\,\colon\, \Omega^p(X_0)\,\longrightarrow\, \Omega^p(X_1)$$ restricted to
	$\Omega^p(\mb{X})\,\subset\, \Omega^p(X_0)$ gives the map
	$\partial_0\,\colon\, \Omega^p(\mb{X})\,\longrightarrow \Omega_{\rm Ver}^p(X_1).$ For purely
	notational convenience here we set 
	$$\Omega^p(\mb{X})\,:=\,\Omega_{\rm Ver}^p(X_0).$$ 
	Moreover, since the connection $\mc{H}$ on $\mb{X}$ is integrable, the exterior differentials $d\colon \Omega^p(X_i)\ra \Omega^{p+1}(X_i)$ preserves the `vertical spaces' (see Lemma~\ref{differentialisdifferentialform} and the proof there),
	$$d\,\colon\, \Omega_{\rm Ver}^p(X_i)\,\longrightarrow\, \Omega_{\rm Ver}^{p+1}(X_i).$$
	
	Thus we deduce the following.
	
	\begin{proposition}
		Let $\mb{X}\,=\,[X_1\rra X_0]$ be a Lie groupoid with an integrable connection $\mc{H}.$ Then $\mc{H}$ induces a
		subcomplex of the double complex \eqref{Eq:doublecom},
		\begin{equation}\label{Eq:doublesubcom}
			\begin{matrix}
				\vdots & {} & \vdots & {} &\vdots & {} & {}\\
				\Omega^p_{\rm Ver}(X_0) & \stackrel{\partial}{\longrightarrow}& \Omega_{\rm Ver}^p(X_1)& \stackrel{\partial}{\longrightarrow}& \Omega_{\rm Ver}^p(X_2) & \stackrel{\partial}{\longrightarrow}& \cdots\\
				\Big\downarrow{ d} & {}& \Big\downarrow{d} & {}	& \Big\downarrow{d} & {} & {}\\
				\Omega_{\rm Ver}^{p+1}(X_0) & \stackrel{\partial}{\longrightarrow}& \Omega_{\rm Ver}^{p+1}(X_1)& \stackrel{\partial}{\longrightarrow}& \Omega_{\rm Ver}^{p+1}(X_2) & \stackrel{\partial}{\longrightarrow}& \cdots \\
				\vdots & {} & \vdots & {} &\vdots & {} & {}
			\end{matrix}
		\end{equation}
		
	\end{proposition}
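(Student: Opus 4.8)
The plan is to verify the three properties that make \eqref{Eq:doublesubcom} a sub-double-complex of \eqref{Eq:doublecom}: that each entry $\Omega^p_{\rm Ver}(X_i)$ is a linear subspace of the corresponding entry $\Omega^p(X_i)$; that the horizontal differentials $\partial_{i-1}$ of \eqref{Eq:horizdiff} restrict to these subspaces; and that the vertical exterior differentials $d$ restrict as well. The identities $\partial^2=0$, $d^2=0$, and the (anti)commutation of $\partial$ with $d$ are then inherited automatically from the ambient double complex, so nothing more is needed there. The first property is purely a matter of unwinding definitions: for $i\geq 1$ the space $\Omega^p_{\rm Ver}(X_i)$ was introduced as a subspace of $\Omega^p(X_i)$, while for $i=0$ one uses the convention $\Omega^p_{\rm Ver}(X_0):=\Omega^p(\mb{X})\subseteq\Omega^p(X_0)$.

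For the horizontal differentials I would treat the edge map $\partial_0$ separately from the interior maps $\partial_{i-1}$ with $i\geq 2$. The interior case is exactly the observation recorded in the run-up to the statement: since $ds(u\circ v)=ds(v)$, the vertical sub-bundle $\ker(ds)$ is closed under the composition of the tangent groupoid $[TX_1\rra TX_0]$, so each differential $d{\rm pr}_{i,\alpha}$ of the face maps in \eqref{eq:maps} carries the iterated fibre product of copies of $\ker(ds)$ into $\ker(ds)$; a dual argument then shows that each ${\rm pr}^*_{i,\alpha}$, and hence $\partial_{i-1}=\sum_\alpha(-1)^{\alpha+1}{\rm pr}^*_{i,\alpha}$, maps $\Omega^p_{\rm Ver}(X_{i-1})$ into $\Omega^p_{\rm Ver}(X_i)$. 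For $\partial_0=s^*-t^*$ I would invoke Definition \ref{Definition:differentialformonaLiegroupoid} directly: a form $\varphi\in\Omega^p(\mb{X})$ is by definition one satisfying $\widehat{\mc{H}}(s^*\varphi)=\widehat{\mc{H}}(t^*\varphi)$, which is precisely the statement $\widehat{\mc{H}}(\partial_0\varphi)=0$, i.e. $\partial_0\varphi\in\Omega^p_{\rm Ver}(X_1)$.

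For the vertical differentials the assertion is $d\bigl(\Omega^p_{\rm Ver}(X_i)\bigr)\subseteq\Omega^{p+1}_{\rm Ver}(X_i)$, and I would prove it by rerunning the computation in the proof of Lemma \ref{differentialisdifferentialform}. The only place integrability of $\mc{H}$ enters that computation is the fact that the bracket of two local sections of the horizontal distribution is again horizontal; on each $X_i$ this remains true because the distribution complementary to $\ker(ds)\times_{TX_0}\cdots\times_{TX_0}\ker(ds)$ in $TX_i$ is again integrable — essentially by the Frobenius theorem, this being the iterated analogue of Lemma \ref{pullbackofintegrableconnection}. Granting it, the Cartan-formula computation shows that $d\varphi$ evaluated on horizontal local sections produces only terms in which $\varphi$ is applied either to a horizontal section or to a bracket of horizontal sections, both of which vanish for $\varphi\in\Omega^p_{\rm Ver}(X_i)$; hence $d\varphi\in\Omega^{p+1}_{\rm Ver}(X_i)$. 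Assembling the three points then yields the proposition.

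The step I expect to cost the most bookkeeping is the verticality of $d$ on the higher fibre products $X_i$ for $i\geq 2$: on $X_1$ the relevant distribution is unambiguously $\mc{H}$, but on $X_i$ one must use the identification $T(X_1\times_{X_0}\cdots\times_{X_0}X_1)\cong TX_1\times_{TX_0}\cdots\times_{TX_0}TX_1$ and confirm both that the forms called ``vertical'' in the pushout description of $\Omega^p(X_i)$ coincide with those annihilating the product of the lifted horizontal distributions, and that this product distribution inherits integrability from $\mc{H}$. Once that identification is in hand, the differential-ideal property — and hence the restriction of $d$ — follows exactly as in the case of $X_1$, and the remaining two points are the verifications already carried out in the paragraphs preceding the statement.
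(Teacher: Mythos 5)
Your proposal is correct and follows essentially the same route as the paper, whose proof is precisely the discussion preceding the statement: $\partial_0=s^*-t^*$ lands in $\Omega^p_{\rm Ver}(X_1)$ by Definition \ref{Definition:differentialformonaLiegroupoid}, the higher $\partial_{i-1}$ preserve verticality because $\ker(ds)$ is closed under composition (dualized to the pullbacks ${\rm pr}^*_{i,\alpha}$), and $d$ preserves verticality via the bracket-of-horizontal-sections argument from the proof of Lemma \ref{differentialisdifferentialform}. The only place you are more explicit than the paper is in checking integrability of the induced horizontal distribution on the higher fibre products $X_i$, which the paper leaves implicit in its citation of Lemma \ref{differentialisdifferentialform}.
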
\label{prop:doublecomplexver}
	Then one may consider again the associated total complex 
	\begin{equation}\nonumber
		C_{\rm Ver}^{n} (\mb{X})\,=\,\bigoplus_{p+i=n} \Omega_{\rm Ver}^p(X_i)
	\end{equation} 
	and the corresponding cohomology respectively as \textit{simplicial de Rham complex} and \textit{simplicial
		de Rham cohomology} $H_{dR}^*(X_{\bullet},\, \mc{H})$ of a Lie groupoid with an integrable connection. This set-up will be explored in more details in our follow-up article \cite{BCKN2}. 	
	
	\subsection{Forms and de Rham cohomology for Deligne-Mumford stacks}\label{SS:Diffe_forms_etale}
	
	Here we extend our construction of de Rham cohomology from Lie groupoids with connections to Deligne-Mumford stacks (see Definition~\ref{Def:Etalestack}) using the fact that a Deligne-Mumford stack is presented by an \'etale Lie groupoids $\mb{X}=[X_1\rra X_0]$, which always admits the connection $\mc{H}=TX_1$.
	
	Let $\mr{X}$ be any differentiable stack and $x\,\colon\, X\,\longrightarrow\, \mr{X}$ a presentation. Set $X_0
	\,=\,X.$ Since $x\,\colon\, X\,\longrightarrow\, \mr{X}$ is a surjective representable submersion, we obtain a
	family of smooth manifolds $\{X_i\}_{i\geq 0}$, where
	$$X_i\,:=\,\underbrace{X\times_{\mr{X}}\cdots\times_{\mr{X}}X}_{(i+1)-{\rm times}}.$$
	For each $i\,\geq \,1$ there are $i+1$ canonical projection maps $\{{\rm pr}_{i, \alpha}\,\colon\, X_i
	\,\longrightarrow\, X_{i-1}\}_{\alpha}$ defining a simplicial manifold (see \eqref{eq:maps})
	\begin{equation}\label{Eq:Simpman}
		\xymatrix{
			X_\bullet=\biggl\{\ldots\ar@<-1.5ex>[r]\ar@<-.5ex>[r]\ar@<1.5ex>[r]\ar@<.5ex>[r] & X_2
			\ar[r]\ar@<1ex>[r]\ar@<-1ex>[r] & X_1\ar@<-.5ex>[r]\ar@<.5ex>[r]
			&X_0\,}\biggr\}.
	\end{equation}
	In particular, $\mb{X}\,:=\,[X_1\rra X_0]$ is the Lie groupoid associated to the atlas with source and target maps respectively given by ${\rm pr}_{1, 1}$ and ${\rm pr}_{1, 2}$ (see Example \ref{EX:Atlasliegrpd}) and $X_\bullet$ the simplicial manifold given by the nerve of $\mb{X}$. Let $\Omega^p(X_i)$ be the vector space of differential $p$-forms on the smooth manifold $X_i.$ Then using the pullbacks along the projections we get a simplicial vector space 
	\begin{equation}\label{Eq:Simpmandiffforms}
		\xymatrix{
			\Omega^p(X_\bullet)\colon\,\,\, \Omega^p(X_0)\ar@<-.5ex>[r]\ar@<.5ex>[r] &
			\Omega^p(X_1)\ar[r]\ar@<1ex>[r]\ar@<-1ex>[r] &
			\Omega^p(X_2)\ar@<-1.5ex>[r]\ar@<-.5ex>[r]\ar@<1.5ex>[r]\ar@<.5ex>[r]
			&\ldots}
	\end{equation}
	Define for each $i\,\geq\, 0$ the differential
	\begin{equation}\nonumber
		\partial_i\,:=\,\sum_{\alpha=1}^{i+2}(-1)^{\alpha+1}{\rm pr}^{*}_{{i+1}, \alpha}
		\,\colon\, \Omega^p(X_i)\,\longrightarrow\, \Omega^p(X_{i+1}).
	\end{equation}
	In turn, we get a cochain complex 
	$$
	\xymatrix{
		\Omega^p(X_0)\rto^{\partial_{0}}&\Omega^p(X_1)\rto^{\partial_1}&\Omega^p(X_2)\rto^{\partial_2} & \ldots}.
	$$
	Let 
	$$h^i(\Omega^p(X_{\bullet}))$$
	be the corresponding cohomology groups. In particular this means
	\begin{equation}\label{Eq:h0iskernel}
		h^0(\Omega^p(X_{\bullet}))=\ker{\partial_0}=\ker ({\rm pr}^*_{1, 1}-{\rm pr}^*_{1, 2}).
	\end{equation}
	For any other atlas $y\colon Y\,\ra\, \mr{X}$ we conclude
	using Corollary 3.1 in \cite{MR2817778},
	\begin{equation}\label{Eq:cohomatlasindependent}
		h^i(\Omega^p(X_{\bullet}))\cong h^i(\Omega^p(Y_{\bullet})).
	\end{equation}
	We refer to \cite{{BehCoh},{MR2817778}} for a more extensive discussion on the above constructions.
	
	Now suppose $\mr{X}$ is a Deligne-Mumford stack and $x\colon X\,\longrightarrow\, \mr{X}$ an \'etale presentation. Consider the \'etale Lie groupoid $\mb{X}=[X_1\rra X_0].$ As we have seen in Example \ref{Example:Etalegrouoidcoonnection}, then 
	$\mc{H}=\, TX_1$ is an integrable connection on the Lie 
	groupoid $\mb{X}$. 	For this connection the set of differential forms on $(\mb{X}, TX_1)$ (see Definition~\ref{Definition:differentialformonaLiegroupoid}) reduces to a much simpler form,
	\begin{equation}\label{Eq:Delmumsimpler} 
		\Omega^p(\mb{X},\, TX_1)\,=\,\ker({\rm pr}^*_{1, 1}-{\rm pr}^*_{1, 2}).
	\end{equation}
	Then using \eqref{Eq:h0iskernel} and \eqref{Eq:cohomatlasindependent} we arrive at the following result.
	
	\begin{proposition}\label{prop:etaledifferentaial}
		Let $\mr{X}$ be a Deligne-Mumford stack, and let $x\,\colon\, X\,\longrightarrow\, \mr{X}$ and
		$y\,\colon\, Y\,\longrightarrow\, \mr{X}$ a pair of \'etale presentations. Set
		$X_1\,:=\,X\times_{\mr{X}}X$ and $Y_1\,:=\,Y\times_{\mr{X}}Y$. Let $\mb{X}\,=\,[X_1 \rra X]$ and $\mb{Y}
		\,=\,[Y_1 \rra Y]$ be the associated \'etale Lie groupoids. Then there is an isomorphism
		$$\Omega^p(\mb{X},\, TX_1)\,\cong\, \Omega^p(\mb{Y},\, TY_1).$$
	\end{proposition}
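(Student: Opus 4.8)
The plan is to derive the isomorphism by simply composing the three identifications assembled immediately before the statement. For the \'etale presentation $x\colon X\to\mr{X}$ one has, by \eqref{Eq:Delmumsimpler},
\[
\Omega^p(\mb{X},\,TX_1)\;=\;\ker\bigl({\rm pr}^*_{1,1}-{\rm pr}^*_{1,2}\colon\Omega^p(X_0)\,\longrightarrow\,\Omega^p(X_1)\bigr),
\]
and by \eqref{Eq:h0iskernel} this kernel is precisely $h^0(\Omega^p(X_\bullet))$, the degree-zero cohomology of the cochain complex $\Omega^p(X_0)\xra{\partial_0}\Omega^p(X_1)\to\cdots$ built from the nerve $X_\bullet$ of $\mb{X}$. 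The identical reasoning applied to $y$ gives $\Omega^p(\mb{Y},\,TY_1)=h^0(\Omega^p(Y_\bullet))$. Finally, \eqref{Eq:cohomatlasindependent}, which records Corollary 3.1 of \cite{MR2817778}, supplies isomorphisms $h^i(\Omega^p(X_\bullet))\cong h^i(\Omega^p(Y_\bullet))$ for all $i$ and $p$; specializing to $i=0$ and stringing the identifications together yields $\Omega^p(\mb{X},\,TX_1)\cong\Omega^p(\mb{Y},\,TY_1)$, which is the claim.

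First I would spell out why \eqref{Eq:Delmumsimpler} holds, as this is the only place the \'etale hypothesis is genuinely used. Since $\mb{X}$ is \'etale, $\mc{H}=TX_1$ is a connection (Example~\ref{Example:Etalegrouoidcoonnection}), and then the projection $P_{\mc{H}_\gamma}$ of \eqref{tsd2} is the identity of $T_\gamma X_1$, so the horizontal-component operator $\widehat{\mc{H}}$ of Definition~\ref{Definition:Horizontalcomponentofadifferentialform} is the identity on forms. Hence the compatibility condition $\widehat{\mc{H}}(s^*\varphi)=\widehat{\mc{H}}(t^*\varphi)$ in Definition~\ref{Definition:differentialformonaLiegroupoid} collapses to $s^*\varphi=t^*\varphi$, i.e.\ $\varphi\in\ker({\rm pr}^*_{1,1}-{\rm pr}^*_{1,2})$, since ${\rm pr}_{1,1}$ and ${\rm pr}_{1,2}$ for the nerve are exactly the maps $s$ and $t$.

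The one step requiring a word of care is the invocation of \eqref{Eq:cohomatlasindependent}, which is phrased for two atlases of a single differentiable stack: I would first recall, via Example~\ref{EX:Atlasliegrpd}, that $\mb{X}=[X\times_{\mr{X}}X\rra X]$ and $\mb{Y}=[Y\times_{\mr{X}}Y\rra Y]$ do both present $\mr{X}$, hence are Morita equivalent, which is what makes the cited corollary applicable. After that there is really no obstacle, since the statement is a composition of already-established isomorphisms. If one preferred not to cite Corollary 3.1 of \cite{MR2817778} as a black box, the underlying mechanism is to interpose the refinement groupoid associated with $X\times_{\mr{X}}Y$, covering both presentations, and to check that the two projections induce isomorphisms of the respective degree-zero cohomologies onto that of the refinement; but this is exactly the content packaged by the cited corollary, so I would not reproduce it here.
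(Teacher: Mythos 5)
Your proposal is correct and follows essentially the same route as the paper, which likewise obtains the result by combining the identification $\Omega^p(\mb{X},TX_1)=\ker({\rm pr}^*_{1,1}-{\rm pr}^*_{1,2})=h^0(\Omega^p(X_\bullet))$ with the atlas-independence of $h^i(\Omega^p(X_\bullet))$ from Corollary 3.1 of \cite{MR2817778}. Your added justification of why the horizontal projection is the identity in the \'etale case, and your remark that the two groupoids present the same stack so that the cited corollary applies, are both accurate elaborations of steps the paper leaves implicit.
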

	
	It is now straightforward to verify that the above isomorphism descends to an isomorphism of de Rham cohomologies 
	(see \ref{cohomologyofLiegroupoid}) of the associated Lie groupoids as well.
	
	\begin{corollary}\label{invariance:etale}
		Let the conditions and notations be as in Proposition~\ref{prop:etaledifferentaial}. Then 
		$$H^k_{\rm dR}(\mb{X},\, T X_1)\,\cong\, H^k_{\rm dR}(\mb{Y},\,T Y_1).$$
	\end{corollary}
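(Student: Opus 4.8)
The plan is to upgrade the vector space isomorphism of Proposition~\ref{prop:etaledifferentaial} to an isomorphism of the associated de Rham cochain complexes; the statement about $H^k_{\rm dR}$ then follows formally. Recall that for an \'etale presentation the integrable connection is $TX_1$, that by \eqref{Eq:Delmumsimpler} one has $\Omega^p(\mb{X},\, TX_1)\,=\,\ker({\rm pr}^*_{1,1}-{\rm pr}^*_{1,2})\,\subset\,\Omega^p(X_0)$, and that the de Rham differential on $\Omega^\bullet(\mb{X},\, TX_1)$ is simply the restriction of the ordinary exterior derivative on $X_0$, which preserves this subspace by Lemma~\ref{differentialisdifferentialform}. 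So the only thing to verify is that the isomorphism of Proposition~\ref{prop:etaledifferentaial} intertwines these restricted exterior derivatives.

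The cleanest way to see this, and the route I would take, is via a common refinement. Since $x\colon X\ra\mr{X}$ and $y\colon Y\ra\mr{X}$ are both \'etale presentations, the map $z\colon Z\ra\mr{X}$ with $Z:=X\times_{\mr{X}}Y$ is again an \'etale presentation, because \'etale maps are stable under base change and composition. Let $Z_1:=Z\times_{\mr{X}}Z$, $\mb{Z}:=[Z_1\rra Z]$, equipped with the connection $TZ_1$. The two projections yield morphisms of Lie groupoids $p\colon\mb{Z}\ra\mb{X}$ and $q\colon\mb{Z}\ra\mb{Y}$ whose object-level maps $Z\ra X$ and $Z\ra Y$ are \'etale, hence have fibrewise invertible differentials; consequently $dp(TZ_1)=TX_1$, $dq(TZ_1)=TY_1$, and $p$, $q$ are morphisms of Lie groupoids with connections. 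By Proposition~\ref{prop:pullbackderham} we obtain graded $\mb{R}$-algebra homomorphisms
$$H^*_{\rm dR}(\mb{X},\,TX_1)\xra{\,p^*\,} H^*_{\rm dR}(\mb{Z},\,TZ_1)\xleftarrow{\,q^*\,} H^*_{\rm dR}(\mb{Y},\,TY_1),$$
each induced on forms by the pullback along the corresponding \'etale map, which of course commutes with $d$. It therefore suffices to check that $p^*\colon\Omega^p(\mb{X},\,TX_1)\ra\Omega^p(\mb{Z},\,TZ_1)$ and $q^*\colon\Omega^p(\mb{Y},\,TY_1)\ra\Omega^p(\mb{Z},\,TZ_1)$ are already vector space isomorphisms. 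But this is precisely Proposition~\ref{prop:etaledifferentaial} — equivalently, \eqref{Eq:Delmumsimpler} combined with \eqref{Eq:h0iskernel} and the atlas-independence \eqref{Eq:cohomatlasindependent}, whose comparison map for a refinement is realized by exactly these pullbacks. Composing, $(q^*)^{-1}\circ p^*$ gives the desired isomorphism $H^k_{\rm dR}(\mb{X},\,TX_1)\cong H^k_{\rm dR}(\mb{Y},\,TY_1)$, compatibly with the ring structure.

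The step I expect to be the real obstacle is the last one: verifying that the abstract isomorphism \eqref{Eq:cohomatlasindependent} produced by Corollary~3.1 of \cite{MR2817778} coincides, in cohomological degree $0$ and for the refinement $Z$, with the concrete pullback maps $p^*$ and $q^*$, so that the latter are genuinely invertible on the spaces $\Omega^p(-,\,T(-)_1)$. Granting this compatibility — which is built into the Morita-invariance statements referenced above — the remaining points, namely that $z\colon Z\ra\mr{X}$ is again \'etale, that pullback commutes with the exterior derivative, and that Proposition~\ref{prop:pullbackderham} applies, are all routine.
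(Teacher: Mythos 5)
Your argument is correct and is in substance the same as the paper's (which offers only the one-line assertion that the isomorphism of Proposition~\ref{prop:etaledifferentaial} ``descends'' to cohomology): in both cases the comparison isomorphism is the one coming from Behrend--Xu's atlas-independence, realized by pullback of forms along a common \'etale refinement, and it commutes with $d$ for exactly the reason you give. The single dependency you flag---that the abstract isomorphism of \eqref{Eq:cohomatlasindependent} in degree $0$ is the concrete map $p^*$ (resp.\ $q^*$)---is indeed the only nontrivial point, and if desired it can be closed without citing \cite{MR2817778} by a direct descent argument for differential forms along the surjective \'etale map $Z\to X$, using that the condition $s_{Z}^*\omega=t_{Z}^*\omega$ on $Z_1=Z\times_{\mr{X}}Z$ restricts to the descent condition on $Z\times_X Z$.
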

	
	With these results in mind, we can define differential forms and de Rham cohomology for a Deligne-Mumford stack.
	
	\begin{definition}
		Let $\mr{X}$ be a Deligne-Mumford stack. Choose an \'etale presentation $x\,\colon\, X\,\longrightarrow\, \mr{X}$.
		A \textit{differential form on $\mr{X}$} is a differential form on the \'etale Lie groupoid with connection $\bigl(\mb{X}=[X_1\rra X], TX_1\bigr)$, where $X_1:=X\times_{\mr{X}}X$. We obtain a cochain complex $\Omega^\bullet (\mr{X})$ by setting
		$$\Omega^p(\mr{X})\,:= \,\Omega^p(\mb{X},\, TX_1).$$
		The \textit{$k$-th de Rham cohomology group of $\mr{X}$} is the $k$-th de Rham cohomology group of the pair 
		$\bigl(\mb{X}\,=\,[X_1\rra X],\ TX_1\bigr)$ and we write
		$$H^k_{\rm dR}(\mr{X})\,:=\, H^k_{\rm dR}(\mb{X},\,T X_1).$$ 
	\end{definition}
	
	Corollary \ref{invariance:etale} in particular shows that this is well-defined and does not depend on a choice of a specific \'etale
	presentation for the Deligne-Mumford stack $\mr{X}$.
	
	\begin{example}\label{Ex:Etalestackcohom}
		Let $G$ be a discrete group (a $0$-dimensional Lie group) acting on a smooth manifold $M$.
		Then the action Lie groupoid $\mb{X}\,=\,[M\times G\rra M]$ is \'etale. In turn, the associated classifying stack
		$\mr{B}\mb{X}$ (see Example \ref{Ex:FibredBG}) of the action Lie groupoid is then a Deligne-Mumford stack, namely isomorphic
		to the quotient stack $[M/G]$.
		Furthermore the discreteness of $G$ implies
		for the connection in Example~\ref{Ex:Liegrpcohom} that we have $\mc{H}_{(m,g)}\,=\,T_m M\,\simeq\,
		T_{(m, g)} (M\times G)$. As we have seen in Example~\ref{Ex:Liegrpcohom}, the differential $k$-forms
		on $[M\times G\rra M]$ are the $G$-invariant differential forms on $M$ and
		$H^k_{\rm dR}(\mb{X},\,\mc{H})\,=\, H^k(\Omega^\bullet(M)^G)$. where $H^k(\Omega^\bullet(M)^G)$ is again
		the cohomology corresponding to the $G$-invariant differential forms. Thus the de Rham cohomology of the
		Deligne-Mumford stack $\mr{B}\mb{X}$ just gives the cohomology corresponding to the $G$-invariant
		differential forms. In particular, if the manifold $M$ is just a point with trivial
		$G$-action, the differentiable stack $\mr{B}\mb{X}$ is then just the classifying stack $\mr{B} G$
		of the discrete group $G$ and its de Rham cohomology therefore gives the group cohomology $H^k(G,\,\mb{R})$ of $G$.
	\end{example}	
	
	\section{Connections on principal bundles over Lie groupoids}\label{Section:ConnectiononPrincipalbundleoverLiegroupoid}
	
	Let $\mb{X}\,=\,[X_1\rra X_0]$ be a Lie groupoid and $(E_G\,\longrightarrow\, X_0,\ [X_1\rra X_0])$ a principal $G$-bundle
	over $\mb{X}$. Let 
	\begin{equation}\label{Eq:Atiyahvect}
		0\,\longrightarrow\, \text{ad}(E_G)\,\longrightarrow\,\text{At}(E_G)\,\longrightarrow\,TX_0\,\longrightarrow\,0
	\end{equation}
	be the associated Atiyah sequence corresponding to the principal $G$-bundle $E_G$ (see \eqref{Equation:AtiyahSequenceforbundleoverManifold}).
	In this section we will interpret \eqref{Eq:Atiyahvect}
	as a short exact sequence of vector bundles over 
	$\mb{X}$. This will facilitate the definition of a connection on $(E_G\,\longrightarrow\, X_0,\, [X_1\rra X_0])$.
	
	First we explain the action of the Lie groupoid $\mb{X}$ on $TX_0, \, {\rm At}(E_G)\,=\,(TE_G)/G$
	and ${\rm ad}(E_G)\,=\,(E_G\times 
	\mf{g})/G$. For this fix a connection $$\mc{H}\,\subset\, TX_1$$ on $\mb{X}$.
	
	\subsection{Action of $\mb{X}$ on $TX_0$}\label{Subsection:ActionOfXonTX0}
	
	Consider the homomorphism $\theta$ in \eqref{eth}. Using it we define the map 
	\begin{equation}\label{Eq:Acttan}
		\begin{split}
			\mu_{\rm tan}\,\colon\, & X_1\times_{X_0}TX_0\,\longrightarrow\, TX_0\\
			&(\gamma,\,v)\,\longmapsto\, (t(\gamma),\, \theta_{\gamma}(v)).
		\end{split}
	\end{equation}	
	Let $\tau\,\colon \,TX_0\,\longrightarrow \,X_0$ be the natural projection. Then the pair
	$$(\tau:\,TX_0\,\to \,X_0,\ \mu_{\rm tan})$$
	defines an action of $\mb{X}$ on $TX_0$ (see Definition \ref{Definition:ActionOfLiegroupoid}).
	
	\subsection{Action of $\mb{X}$ on ${\rm At}(E_G)$}\label{Subsection:ActionOfXonTEGG}
	
	As seen in Lemma \ref{pullbackofintegrableconnection}, the connection $\mc{H}$ induces a connection
	$\widetilde{\mc{H}}\,:=\,({\rm pr}_{1 *})^{-1}(\mc{H})\,\subset\, Ts^*E_G$ on $[s^*E_G\,\rra \,E_G]$.
	Substituting $([s^*E_G\,\rra \,E_G],\, \widetilde{\mc{H}})$ in place of
	$(\mb{X},\, \mc{H})$ in \eqref{Eq:Acttan}, we get a map
	$$
	\widetilde{\mu}\, :\, s^*E_G\times_{E_G} TE_G\, \longrightarrow\, TE_G\, .
	$$
	Taking the quotient with respect to the action of $G$, we get a map
	\begin{equation}\label{Eq:Actatiyah}
		\mu_{\rm at}\,\colon\, X_1\,\times_{X_0}\,(TE_G)/G \,=\, X_1\,\times_{X_0}\,\text{At}(E_G) \,\longrightarrow
		\,(TE_G)/G\,=\, \text{At}(E_G)\, .
	\end{equation}
	Now $\mu_{\rm at}$ and the natural projection $\pi_{\rm at}\,:\, {\rm At}(E_G)\,\longrightarrow \,X_0$
	together define an action of $\mb{X}$ on ${\rm At}(E_G)$.
	
	\subsection{Action of $\mb{X}$ on ${\rm ad}(E_G)$}\label{Subsection:ActionofXonEGgG}
	
	Consider the restriction of the map $\widetilde{\mu}$ in Subsection \ref{Subsection:ActionOfXonTEGG}
	to $s^*E_G\times_{E_G} \text{ker}(d\pi)\, \subset\,s^*E_G\times_{E_G} TE_G$, where $d\pi$ is the
	differential of the natural projection $\pi\, :\, E_G\,\longrightarrow\, X_0$. The image of this
	restricted map is clearly $\text{ker}(d\pi)\, \subset\, TE_G$. We recall that
	$\text{ad}(E_G)\,=\, \text{ker}(d\pi)/G\, \subset\, (TE_G)/G\,=\, \text{At}(E_G)$. Consequently, 
	the action $(\mu_{\rm at},\, \pi_{\rm at})$ of $\mb{X}$ on ${\rm At}(E_G)$ in Subsection
	\ref{Subsection:ActionOfXonTEGG} produces an action of $\mb{X}$ on ${\rm ad}(E_G)$.
	
	Then the exactness of the Atiyah sequence in \eqref{Eq:Atiyahvect} readily implies the following:
	
	\begin{proposition}\label{Prop:Atiyaseqgrouoid}
		Let $\mc{H}$ be a connection on the Lie groupoid $\mb{X}\,=\,[X_1\rra X_0]$. Then
		$TX_0$, ${\rm At}(E_G)$ and ${\rm ad}(E_G)$ are vector bundles over $\mb{X}$. Moreover,
		the homomorphisms
		in the short exact sequence in \eqref{Eq:Atiyahvect} are compatible with the actions of $\mb{X}$ on
		$TX_0$, ${\rm At}(E_G)$ and ${\rm ad}(E_G)$.
	\end{proposition}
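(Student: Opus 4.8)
The plan is to prove the two assertions in turn: first that each of $TX_0$, ${\rm At}(E_G)$ and ${\rm ad}(E_G)$ carries an $\mb{X}$-action which is fibrewise linear, so that Definition~\ref{Definition:vectorbundleoverLiegroupoid} applies, and then that the two arrows of \eqref{Eq:Atiyahvect} intertwine these actions. I would begin with $TX_0$ equipped with $(\tau,\,\mu_{\rm tan})$ from \eqref{Eq:Acttan}. Over a fixed $\gamma\in X_1$ the map $\mu_{\rm tan}$ is $\theta_\gamma\colon T_{s(\gamma)}X_0\to T_{t(\gamma)}X_0$, which is a composition of linear bundle maps by \eqref{eth}; hence it is linear, and condition~(2) of Definition~\ref{Definition:ActionOfLiegroupoid} holds by construction. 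Condition~(1), $\mu_{\rm tan}(1_{a(v)},v)=v$, reduces to $\theta_{e(x)}=\mathrm{id}_{T_xX_0}$: by condition~(2) of Definition~\ref{Definition:connectiononLiegroupoid} one has $\mc{H}_{e(x)}=(de)_x(T_xX_0)$, so the splitting isomorphism $\mc{H}_{e(x)}\xrightarrow{\sim}T_xX_0$ of \eqref{tsd} is inverse to $(de)_x$, whence $dt\circ(de)_x=d(t\circ e)_x=\mathrm{id}_{T_xX_0}$. Condition~(3), $\mu_{\rm tan}(\gamma',\mu_{\rm tan}(\gamma,v))=\mu_{\rm tan}(\gamma'\circ\gamma,v)$, unwinds to $\theta_{\gamma'}\circ\theta_\gamma=\theta_{\gamma'\circ\gamma}$, which is exactly condition~(3) of Definition~\ref{Definition:connectiononLiegroupoid} after matching the groupoid composition conventions. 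This establishes that $TX_0$ is a vector bundle over $\mb{X}$.

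Next I would handle ${\rm At}(E_G)=(TE_G)/G$. By Lemma~\ref{pullbackofintegrableconnection}, $\widetilde{\mc{H}}$ is a connection on $[s^*E_G\rra E_G]$, and the map $\widetilde{\theta}$ of \eqref{ethtil} is the corresponding $\theta$-homomorphism. Applying the discussion above verbatim to the pair $([s^*E_G\rra E_G],\,\widetilde{\mc{H}})$ gives an action $\widetilde{\mu}$ of this groupoid on $TE_G$ satisfying all three axioms of Definition~\ref{Definition:ActionOfLiegroupoid} and fibrewise linear. The new point is that $\widetilde{\mu}$ is $G$-equivariant for the $G$-actions on $s^*E_G$ and on $TE_G$ induced from $E_G$: in formula \eqref{ethtil} the projection $P_{\mc{H}_{\gamma}}$ involves only the $X_1$-direction, while $d\mu$ is $G$-equivariant because the $\mb{X}$-action on $E_G$ commutes with the $G$-action via the identity $(\gamma\cdot p)\cdot g=\gamma\cdot(p\cdot g)$ of Definition~\ref{Definition:principalbundleoverLiegroupoid}. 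Hence $\widetilde{\mu}$ descends to $\mu_{\rm at}$ as in \eqref{Eq:Actatiyah}, and the action axioms and fibrewise linearity for $\mu_{\rm at}$ are obtained by taking $G$-quotients of those for $\widetilde{\mu}$. For ${\rm ad}(E_G)=\ker(d\pi)/G$ one restricts: $\ker(d\pi)\subset TE_G$ is $G$-invariant and, as recorded in Subsection~\ref{Subsection:ActionofXonEGgG}, $\widetilde{\mu}$ maps $s^*E_G\times_{E_G}\ker(d\pi)$ into $\ker(d\pi)$; passing to $G$-quotients then gives the fibrewise-linear $\mb{X}$-action on ${\rm ad}(E_G)$.

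Finally I would check compatibility in \eqref{Eq:Atiyahvect}. The inclusion ${\rm ad}(E_G)\hookrightarrow{\rm At}(E_G)$ intertwines the actions tautologically, since the $\mb{X}$-action on ${\rm ad}(E_G)$ was defined as the restriction of the one on ${\rm At}(E_G)$. For $d\pi\colon{\rm At}(E_G)\to TX_0$ it is enough, after taking $G$-quotients, to verify the identity $d\pi\circ\widetilde{\theta}_{\gamma,a}=\theta_\gamma\circ d\pi$ at the level of the $\theta$-maps, which is a direct unwinding of \eqref{eth} and \eqref{ethtil} using that $\pi$ carries the source/target data of $[s^*E_G\rra E_G]$ to those of $\mb{X}$; granting this, exactness of \eqref{Eq:Atiyahvect} as a sequence of vector bundles over $X_0$ promotes it to a sequence of vector bundles over $\mb{X}$. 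I expect the two genuinely non-formal points to be the $G$-equivariance of $\widetilde{\mu}$---without which $\mu_{\rm at}$ is not even well defined---and this last $\theta$-level identity that makes $d\pi$ a morphism of vector bundles over $\mb{X}$; the remaining verifications are routine diagram chases with the axioms of Definition~\ref{Definition:ActionOfLiegroupoid}.
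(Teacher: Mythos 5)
Your proposal is correct and follows essentially the same route as the paper, which defines the actions on $TX_0$, ${\rm At}(E_G)$ and ${\rm ad}(E_G)$ exactly as you do (via $\theta$, the pullback connection $\widetilde{\mc{H}}$ with $G$-quotient, and restriction to $\ker(d\pi)$) and then asserts the proposition follows readily from exactness. You merely supply the verifications the paper leaves implicit — the action axioms from conditions (2) and (3) of Definition~\ref{Definition:connectiononLiegroupoid}, the $G$-equivariance needed for descent, and the identity $d\pi\circ\widetilde{\theta}_{\gamma,a}=\theta_\gamma\circ d\pi$ — all of which are sound.
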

	
	Now we define connections on a principal bundle over a given Lie groupoid.
	
	\begin{definition}[{Connections on principal bundles over Lie groupoids}]\label{Definition:connectiononEGX0}
		Let $\mc{H}\,\subset\, TX_1$ be a connection on the Lie groupoid $\mb{X}\,=\,[X_1\rra X_0]$. Let $\bigl(E_G\,\ra 
		\,X_0,\,\,\mb{X}\bigr)$ be a principal $G$-bundle over $\mb{X}$. A \textit{connection} $\mc{D}$ on
		$(E_G\,\longrightarrow\,X_0,\,\, \mb{X})$ is a splitting of the Atiyah sequence in \eqref{Eq:Atiyahvect} of vector 
		bundles over $\mb{X}$.
	\end{definition}
	
	\begin{remark}\label{ic}
		Note that a connection on $(E_G\,\longrightarrow\,X_0,\,\, \mb{X})$ automatically gives a connection on the 
		underlying principal $G$-bundle $E_G\,\longrightarrow\, X_0$; however, the converse
		is not true in general.
	\end{remark}
	
	\section{Forms for connections on principal Lie groupoid bundles}\label{Section:DifferentialformassociatedtoConnections}
	
	Let $G$ be a Lie group and $\mf{g}$ its Lie algebra. The vector space of $\mf{g}$-valued differential $k$-forms on 
	a manifold $Y$ will be denoted by $\Omega^k(Y, \,\mf{g}).$
	
	Let $P$ be a principal $G$-bundle over a manifold $M$. 
	Then, there is a bijective correspondence between the following two sets:
	\begin{enumerate}
		\item The set of all splittings of the Atiyah sequence for $P$
		(see \eqref{Equation:AtiyahSequenceforbundleoverManifold}).
		
		\item The set of $\mf{g}$-valued $1$-forms $A$ on $P$ satisfying the following conditions: 
		\begin{itemize}
			\item the map $A\, :\, TP\, \longrightarrow\, \mf{g}$ is $G$-equivariant for the adjoint action
			of $G$ on $\mf{g}$, and
			
			\item the restriction of $A$ to every fiber of the projection $P\, \longrightarrow\, M$ coincides
			with the Maurer-Cartan form for the action of $G$ on the fiber.
		\end{itemize}
	\end{enumerate}
	
	Given a splitting homomorphism $\rho\, :\, TM\, \longrightarrow\, \text{At}(P)$ for the
	Atiyah sequence for $P$, the corresponding $\mf{g}$-valued $1$-form $A$ on $P$ is uniquely determined by
	the condition that the quotient by $G$ of the kernel of $A$ coincides with the image of $\rho$.
	Conversely, given a form $TP\, \longrightarrow\, P\times \mf{g}$ satisfying the above two
	conditions, after taking the quotient
	by the action of $G$, we get a homomorphism $\text{At}(P)\, \longrightarrow\, \text{ad}(P)$. This homomorphism
	gives a splitting of the Atiyah sequence for $P$.
	
	The above bijective correspondence provides an alternative definition of a connection on $P$.
	For more details on the above correspondence and for the Atiyah sequence approach to connections on principal bundles, we refer also to \cite[Appendix~A]{MR896907}.
	
	It was observed in Proposition \ref{Prop:Atiyaseqgrouoid} that given a Lie groupoid $\mb{X}$ equipped with a 
	connection $\mc{H}$, and a principal $G$-bundle $E_G$ on $\mb{X}$, there is an Atiyah sequence on $\mb{X}$ whose splittings correspond to connections on $E_G$. On the other hand, in Section 
	\ref{Section:deRhamcohomologyofLiegroupoidwithConnection} we developed the associated theory of differential forms 
	on a Lie groupoid $\mb{X}$ with connection $\mc{H}$ and the corresponding de Rham cohomology for the pair $(\mb{X}, 
	\mc{H})$. In this section, our aim is to describe a connection $\mc{D}$ on a principal bundle $(E_G\ra X_0, 
	\mb{X})$ as a differential $1$-form. Throughout this section, we will always assume the connection $\mc{H}$ on 
	$\mb{X}$ to be integrable.
	
	\subsection{Connections and forms for principal bundles over Lie groupoids}
	
	Assume $\mc{D}\, :\, TX_0\, \longrightarrow\, \text{At}(E_G)$ is a connection on the principal
	$G$-bundle $(E_G\longrightarrow X_0,\, \mb{X}=[X_1\rra X_0])$ given by a splitting of the Atiyah sequence of the associated vector bundles 
	\[0\, \longrightarrow\, \text{ad}(E_G)\, \stackrel{j}{\longrightarrow} \, \text{At}(E_G)\,
	\stackrel{d\pi}{\longrightarrow}\, TX_0 \, \longrightarrow\, 0\]
	over $\mb{X}$.
	Consider the connection on $E_G\, \longrightarrow\, X_0$ given by $\mc{D}$ (see Remark \ref{ic}). 
	As observed at the beginning of this section, this gives a $\mf{g}$-valued differential $1$-form $\omega$ on
	the manifold $E_G$. In Lemma \ref{Lemma:connectionformonLieGroupoid} we will see that this differential $1$-form
	$\omega\,\in \,\Omega^1(E_G,\, \mf{g})$ on the manifold $E_G$ is in fact a differential $1$-form on the
	Lie groupoid $[s^*E_G\rra E_G]$. For that, we first note the following property of differential forms on the Lie groupoids $[s^*E_G\rra E_G]$ and $\mb{X}.$	
	
	\begin{lemma}\label{pullbackdifferentialformonLiegroupoid}
		Let $(E_G\xra{\pi} X_0,\ \mb{X}\,=\, [X_1\rra X_0])$ be a principal $G$-bundle over a Lie groupoid
		$\mb{X}$. Let $\mc{H}\,\subset\, TX_1$ be a connection on $\mb{X}$, and let
		$\widetilde{\mc{H}}\,\subset\, T(s^*E_G)$ be the pullback connection on the Lie groupoid $[s^*E_G\rra E_G]$
		(see Lemma \ref{pullbackofintegrableconnection}). Let $\tau$ be a differential $k$-form on the manifold $X_0$, and
		let $\pi^*\tau$ be the pullback form on the manifold $E_G$.
		Then, $\tau$ is a differential form on the Lie
		groupoid $\mb{X}$ if and only if $\pi^*\tau$ is a differential $k$-form on the Lie groupoid $[s^*E_G\rra E_G]$.
	\end{lemma}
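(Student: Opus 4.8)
### Proof proposal

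The plan is to unwind both sides of the claimed equivalence to the defining condition of Definition~\ref{Definition:differentialformonaLiegroupoid}, and to exploit the compatibility between the connection $\mc{H}$ on $\mb{X}$, its pullback $\widetilde{\mc{H}}$ on $[s^*E_G\rra E_G]$, and the projections. Recall that $[s^*E_G\rra E_G]$ has source map $\widetilde s\colon (a,\gamma)\mapsto a$ (the second projection) and target map $\widetilde t\colon (a,\gamma)\mapsto a\cdot\gamma$ (the action $\mu$). Let $\widetilde\pi\colon s^*E_G = X_1\times_s E_G \to X_1$ be the first projection, which is a morphism of Lie groupoids covering $\pi\colon E_G\to X_0$, and which by construction of $\widetilde{\mc{H}}$ in Lemma~\ref{pullbackofintegrableconnection} sends $\widetilde{\mc{H}}$ onto $\mc{H}$ — so $(\widetilde\pi, \pi)$ is essentially a morphism of Lie groupoids with connection (up to the fact that $d\widetilde\pi$ is surjective rather than an isomorphism on $\widetilde{\mc{H}}$, which is all we need). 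The two commuting squares $\pi\circ \widetilde s = s\circ \widetilde\pi$ and $\pi\circ\widetilde t = t\circ\widetilde\pi$ are the algebraic backbone of the argument.

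The first direction is immediate from Lemma~\ref{Lemma:pullbackisadifferentialform}: since $(\widetilde\pi,\pi)$ sends $\widetilde{\mc{H}}$ to $\mc{H}$, it is a morphism of Lie groupoids with connections, and hence if $\tau$ is a differential $k$-form on $\mb{X}$ then its pullback along the object-level map $\pi$ is a differential $k$-form on $[s^*E_G\rra E_G]$. For the converse I would argue directly. Suppose $\pi^*\tau$ is a differential form on $[s^*E_G\rra E_G]$, i.e. $\widehat{\widetilde{\mc{H}}}(\widetilde s^{\,*}\pi^*\tau) = \widehat{\widetilde{\mc{H}}}(\widetilde t^{\,*}\pi^*\tau)$ as $k$-forms on $s^*E_G$. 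Using $\pi\circ\widetilde s = s\circ\widetilde\pi$ and $\pi\circ\widetilde t = t\circ\widetilde\pi$, this reads $\widehat{\widetilde{\mc{H}}}(\widetilde\pi^{\,*}s^*\tau) = \widehat{\widetilde{\mc{H}}}(\widetilde\pi^{\,*}t^*\tau)$. Now the key point: because $\widetilde{\mc{H}} = (d\widetilde\pi)^{-1}(\mc{H})$ and $d\widetilde\pi$ restricts to a (fiberwise) \emph{surjection} $\widetilde{\mc{H}}_{(a,\gamma)}\to \mc{H}_\gamma$, one checks that for any form $\beta$ on $X_1$ the horizontal component $\widehat{\widetilde{\mc{H}}}(\widetilde\pi^{\,*}\beta)$ equals $\widetilde\pi^{\,*}\bigl(\widehat{\mc{H}}(\beta)\bigr)$; this is just the chain rule together with the fact that $d\widetilde\pi$ intertwines the two horizontal projections $P_{\widetilde{\mc{H}}}$ and $P_{\mc{H}}$. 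Applying this with $\beta = s^*\tau$ and $\beta = t^*\tau$ turns the displayed equality into $\widetilde\pi^{\,*}\bigl(\widehat{\mc{H}}(s^*\tau)\bigr) = \widetilde\pi^{\,*}\bigl(\widehat{\mc{H}}(t^*\tau)\bigr)$. Since $\widetilde\pi\colon s^*E_G\to X_1$ is a surjective submersion, pullback of forms along it is injective (cf. the argument in the proof of Lemma~\ref{Lemma:pullbackofadifferentialform}), so $\widehat{\mc{H}}(s^*\tau) = \widehat{\mc{H}}(t^*\tau)$, which is exactly the statement that $\tau$ is a differential $k$-form on $\mb{X}$.

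The main obstacle — really the only non-formal step — is establishing the intertwining identity $\widehat{\widetilde{\mc{H}}}\circ\widetilde\pi^{\,*} = \widetilde\pi^{\,*}\circ\widehat{\mc{H}}$ on $\Omega^k(X_1)$. Concretely one must show $d\widetilde\pi_{(a,\gamma)}\bigl(P_{\widetilde{\mc{H}}_{(a,\gamma)}}(w)\bigr) = P_{\mc{H}_\gamma}\bigl(d\widetilde\pi_{(a,\gamma)}(w)\bigr)$ for every $w\in T_{(a,\gamma)}s^*E_G$. This follows from the defining splitting $T_{(a,\gamma)}s^*E_G = \widetilde{\mc{H}}_{(a,\gamma)}\oplus\ker(d\widetilde s)_{(a,\gamma)}$: writing $w = w_h + w_v$ accordingly, one has $d\widetilde\pi(w_h)\in \mc{H}_\gamma$ by definition of $\widetilde{\mc{H}} = (d\widetilde\pi)^{-1}(\mc{H})$, while $d\widetilde\pi(w_v)\in\ker(ds)_\gamma$ because $d\widetilde\pi$ carries $\ker(d\widetilde s)$ into $\ker(ds)$ (a consequence of $s\circ\widetilde\pi = \pi\circ\widetilde s$ together with a dimension count, as $s^*E_G\to X_1$ is the pullback of the submersion $\pi$). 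Hence $d\widetilde\pi(w) = d\widetilde\pi(w_h) + d\widetilde\pi(w_v)$ is precisely the $\mc{H}_\gamma\oplus\ker(ds)_\gamma$ decomposition of $d\widetilde\pi(w)$, and the claimed identity follows. With this lemma in hand the rest is bookkeeping with pullbacks and the commuting squares, exactly as sketched above; note that integrability of $\mc{H}$ is not needed for this particular statement, only for the later results where exterior derivatives enter.
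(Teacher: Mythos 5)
Your proposal is correct and follows essentially the same route as the paper: the paper also works with the groupoid morphism $({\rm pr}_1,\pi)\colon [s^*E_G\rra E_G]\to\mb{X}$, verifies exactly your intertwining identity $d{\rm pr}_1\circ P_{\widetilde{\mc{H}}}=P_{\mc{H}}\circ d{\rm pr}_1$ (via the explicit formula $P_{\widetilde{\mc{H}}_{(\gamma,a)}}(v,w)=(P_{\mc{H}_\gamma}(v),w)$), and then concludes by Lemmas \ref{Lemma:pullbackisadifferentialform} and \ref{Lemma:pullbackofadifferentialform}, the latter being precisely your injectivity-of-pullback argument for the converse. Your parenthetical worry that $({\rm pr}_1,\pi)$ is only ``essentially'' a morphism of Lie groupoids with connections is unnecessary: since $d{\rm pr}_1$ is fiberwise surjective and $\widetilde{\mc{H}}=(d{\rm pr}_1)^{-1}(\mc{H})$, one has $d{\rm pr}_1(\widetilde{\mc{H}})=\mc{H}$ exactly as the definition requires.
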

	
	\begin{proof}
		Consider the morphism of Lie groupoids
		$$({\rm pr}_1,\,\pi)\,\colon\, [s^*E_G\,\rra \,E_G]\,\longrightarrow\, \mb{X}\, .$$
		As $\pi\,\colon\, E_G\,\longrightarrow \,X_0$ is a surjective submersion, its pullback ${\rm pr}_1\,\colon\,
		s^*E_G\,\longrightarrow \, X_1$ is a surjective submersion as well. Take any $(\gamma,\, a)\,\in \,s^*E_G$
		and $(v,\, w)\,\in\, T_{(\gamma,a)}s^*E_G$. We have 
		\begin{align*}
			({\rm pr}_1)_{*,(\gamma,a)}\bigl(P_{\widetilde{\mc{H}}_{(\gamma,a)}}(v,\,w)\bigr)
			&\,=\,({\rm pr}_1)_{*,(\gamma,a)}\bigl(P_{\mc{H}_{\gamma}}(v),w\bigr)\\
			&\,=\,P_{\mc{H}_{\gamma}}(v)\\
			&\,=\,P_{\mc{H}_{{\rm pr}_1(\gamma,a)}}\bigl(({\rm pr}_1)_{*,(\gamma,a)}(v,\,w)\bigr).
		\end{align*}
		Now the lemma follows immediately
		from Lemma \ref{Lemma:pullbackisadifferentialform} and Lemma \ref{Lemma:pullbackofadifferentialform}.
	\end{proof}
	
	The following lemma classifies connections on the pair $(E_G\xra{\pi} X_0, \, \mb{X})$ in terms of
	$\mf{g}$-valued differential $1$-forms on the Lie groupoid $[s^*E_G\,\rra\, E_G]$.
	
	\begin{lemma}\label{Lemma:connectionformonLieGroupoid}
		Let $(E_G\xra{\pi} X_0, \, \mb{X}\,=\,[X_1\rra X_0])$ be a principal $G$-bundle over a Lie groupoid with
		integrable connection $\bigl(\mb{X}, \, \mc{H}\bigr)$. Let $\omega$ be a connection $1$-form
		on the principal $G$-bundle $E_G\,\longrightarrow \, X_0$. Let
		$\mc{D}\,\colon \,{{\rm At}(E_G)}\,\longrightarrow \, {\rm ad}(E_G)$ be the corresponding homomorphism
		of vector bundles over $X_0$. Then $\mc{D}$ defines a connection on
		$(E_G\, \xra{\pi} \, X_0,\, \mb{X})$ if and only if $\omega$ is a $\mf{g}$-valued $1$-form on the Lie groupoid $[s^*E_G\,\rra \,E_G]$.
	\end{lemma}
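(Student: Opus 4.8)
The plan is to reduce both halves of the asserted equivalence to one and the same condition on the bundle map $\widetilde\theta$ attached to the pullback connection $\widetilde{\mc H}$ on $[s^*E_G\rra E_G]$ in \eqref{ethtil}, namely: for every composable pair $(\gamma,a)\in s^*E_G$ the linear map $\widetilde\theta_{\gamma,a}\colon T_aE_G\to T_{\gamma\cdot a}E_G$ carries $\ker\omega_a$ into $\ker\omega_{\gamma\cdot a}$. First I would recall, from the classical correspondence described at the start of this section, that if $\mc{H}^{E_G}:=\ker\omega\subset TE_G$ denotes the horizontal distribution of the ordinary connection form $\omega$, then the image of the splitting of the Atiyah sequence over $X_0$ determined by $\mc{D}$ is exactly the $G$-invariant subbundle $\mc{H}^{E_G}/G\subset \text{At}(E_G)=(TE_G)/G$. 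By Proposition \ref{Prop:Atiyaseqgrouoid} the sequence \eqref{Eq:Atiyahvect} is already exact as a sequence of vector bundles over $\mb{X}$, with $\text{ad}(E_G)$ an $\mb{X}$-invariant subbundle of $\text{At}(E_G)$; hence $\mc{D}$ is a splitting over $\mb{X}$ precisely when the complementary subbundle $\ker\mc{D}=\mc{H}^{E_G}/G$ is invariant under the $\mb{X}$-action on $\text{At}(E_G)$. Unwinding \eqref{Eq:Actatiyah} and \eqref{ethtil}, that action sends a class $[w]$ with $w\in T_aE_G$, $\pi(a)=s(\gamma)$, to $[\widetilde\theta_{\gamma,a}(w)]$, so the invariance in question is exactly the condition $\widetilde\theta_{\gamma,a}(\ker\omega_a)\subseteq\ker\omega_{\gamma\cdot a}$ for all composable $(\gamma,a)$.

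Next I would unwind the other condition in the same spirit. Using that the horizontal projection for $\widetilde{\mc H}$ is $P_{\widetilde{\mc H}_{(\gamma,a)}}(v',w)=(P_{\mc H_\gamma}(v'),w)$ — as computed in the proof of Lemma \ref{pullbackdifferentialformonLiegroupoid} — that the source and target maps of $[s^*E_G\rra E_G]$ are ${\rm pr}_2$ and the action $\mu$ respectively, and that $d\mu_{(\gamma,a)}(P_{\mc H_\gamma}(v'),w)=\widetilde\theta_{\gamma,a}(w)$ by the very definition \eqref{ethtil}, the defining condition $\widehat{\widetilde{\mc H}}({\rm pr}_2^*\omega)=\widehat{\widetilde{\mc H}}(\mu^*\omega)$ for $\omega$ to be a $\mf{g}$-valued $1$-form on $[s^*E_G\rra E_G]$ reduces, after letting $w$ range over all of $T_aE_G$ (a valid lift $v'\in T_\gamma X_1$ with $ds(v')=d\pi(w)$ exists since $s$ is a submersion), to the identity $\omega_a(w)=\omega_{\gamma\cdot a}(\widetilde\theta_{\gamma,a}(w))$ for all $(\gamma,a)\in s^*E_G$ and all $w\in T_aE_G$. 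One direction of the lemma is then immediate: restricting this identity to $w\in\ker\omega_a$ gives $\widetilde\theta_{\gamma,a}(\ker\omega_a)\subseteq\ker\omega_{\gamma\cdot a}$, which by the previous paragraph says precisely that $\mc{D}$ is a connection on $(E_G\to X_0,\,\mb{X})$.

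The one step that needs real input, and which I expect to be the main obstacle, is the converse: recovering the full identity $\omega_a(w)=\omega_{\gamma\cdot a}(\widetilde\theta_{\gamma,a}(w))$ on all tangent vectors from the weaker hypothesis that $\widetilde\theta$ merely preserves horizontal subspaces. Here the plan is to analyse $\widetilde\theta_{\gamma,a}$ on vertical vectors: it is linear in $w$, and when $d\pi(w)=0$ one may choose the lift $v'=0$ in \eqref{ethtil}, so that $\widetilde\theta_{\gamma,a}$ restricted to $\ker(d\pi)$ agrees with the differential $d(\mu_\gamma)_a$ of the fibre map $\mu_\gamma\colon \pi^{-1}(s(\gamma))\to\pi^{-1}(t(\gamma))$, $a\mapsto\gamma\cdot a$. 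Because the $\mb{X}$-action on $E_G$ commutes with the $G$-action — this is precisely the compatibility $(\gamma\cdot a)\cdot g=\gamma\cdot(a\cdot g)$ built into Definition \ref{Definition:principalbundleoverLiegroupoid} — the map $\mu_\gamma$ is $G$-equivariant, so $d(\mu_\gamma)_a$ carries the value $A^*_a$ of any fundamental vector field to $A^*_{\gamma\cdot a}$, and since $\omega$ is a connection form both are sent to $A\in\mf{g}$ by $\omega$. Decomposing a general $w\in T_aE_G$ as $w=w^{\text{hor}}+(\omega_a(w))^*_a$ with $w^{\text{hor}}\in\ker\omega_a$, applying the linear map $\widetilde\theta_{\gamma,a}$, and combining $\widetilde\theta_{\gamma,a}(w^{\text{hor}})\in\ker\omega_{\gamma\cdot a}$ with the computation on the vertical part, one gets $\omega_{\gamma\cdot a}(\widetilde\theta_{\gamma,a}(w))=0+\omega_a(w)$, which finishes the argument. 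It is worth noting that this particular lemma uses only that $\mc{H}$ is a connection, not that it is integrable.
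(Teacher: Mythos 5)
Your proof is correct, and it is worth comparing with the paper's, since the two arguments are organized differently even though they ultimately hinge on the same identity $\omega_{\gamma\cdot a}\bigl(\widetilde\theta_{\gamma,a}(w)\bigr)=\omega_a(w)$. The paper translates ``$\mc{D}$ is a splitting over $\mb{X}$'' directly into equivariance of the lifted map $\overline{\mc{D}}\colon TE_G\to E_G\times\mf{g}$ and observes that, once one knows the $\mb{X}$-action on ${\rm ad}(E_G)=(E_G\times\mf{g})/G$ is trivial on the $\mf{g}$-factor, this equivariance \emph{is} the displayed identity, which in turn \emph{is} the condition $\widehat{\widetilde{\mc{H}}}(\mu^*\omega)=\widehat{\widetilde{\mc{H}}}({\rm pr}_2^*\omega)$; both implications are then the same chain of rewritings. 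You instead characterize the splitting condition as $\mb{X}$-invariance of the complementary subbundle $\ker\omega/G\subset{\rm At}(E_G)$ (legitimate, via Proposition \ref{Prop:Atiyaseqgrouoid} and $\mc{D}\circ j={\rm id}$), which makes one implication trivial but forces you to upgrade kernel-invariance back to the full identity in the converse. The vertical analysis you supply for that step --- $\widetilde\theta_{\gamma,a}$ restricted to $\ker(d\pi)$ equals $d(\mu_\gamma)_a$ (take the lift $v'=0$ in \eqref{ethtil}), which by $G$-equivariance of the $\mb{X}$-action sends $A^*_a$ to $A^*_{\gamma\cdot a}$ --- is precisely the computation the paper suppresses when it asserts $\gamma\cdot\overline{\mc{D}}(a,v)=(\gamma\cdot a,\omega(a)(v))$, so your version makes explicit a point the published proof leaves implicit. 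Your closing remark that integrability of $\mc{H}$ is not needed here is also accurate: the paper's proof likewise never uses it (integrability only enters later, e.g.\ in Lemma \ref{CurvatureformIsDifferentialformOnLiegroupoid}, where exterior derivatives of groupoid forms are taken).
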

	
	\begin{proof}
		Let $\overline{\mc{D}}\,\colon\, TE_G\,\longrightarrow \, E_G\times \mf{g}$ be the homomorphism
		of vector bundles given by the $1$-form $\omega\,\in \,\Omega^1(E_G,\,\mf{g})$.
		Then, $\overline{\mc{D}}$ descends to a homomorphism
		$\mc{D}\,\colon \,\text{At}(E_G)\,\longrightarrow\,\text{ad}(E_G)$ of vector bundles over $X_0$.
		
		Suppose that the above homomorphism $\mc{D}$ is a connection on the principal
		$G$-bundle $(E_G\xra{\pi} X_0,\,\mb{X})$.
		We have to show that $\omega$ is a differential form on $[s^*E_G\rra E_G]$.
		
		Since $\mc{D}$ is a connection on $(E_G\xra{\pi} X_0,\, \mb{X})$, we know that
		$\mc{D}$ is a morphism of vector bundles over the Lie groupoid $\mb{X}$, and hence 
		$\mc{D}(\gamma\cdot[r])\,=\,\gamma\cdot\mc{D}([r])$ for all $(\gamma,\,[r])\,\in\,
		X_1\times_{X_0}(TE_G)/G$; equivalently, the homomorphism $\overline{\mc{D}}$ satisfies the
		condition that $\overline{\mc{D}}\bigl(\gamma\cdot (a,\, v)\bigr)\,=\,\gamma\cdot\overline{\mc{D}}(a,\, v)$
		for all $\bigl(\gamma,\, (a,\, v)\bigr)\,\in\, X_1\times_{X_0}TE_G\,=\,s^*E_G$.
		
		Let $\widetilde\theta$ be the
		map in \eqref{eth} corresponding to the pullback connection ${\widetilde {\mc{H}}}$ on $s^*E_G\,\rra\, E_G.$
		Recall that $\gamma\cdot (a,\, v)\,=\,\widetilde{\theta}_{(\gamma, a)}(v)$ and $\overline{\mc{D}}\bigl((a,\, v)\bigr)
		\,=\,\bigl(a,\omega(a)(v)\bigr)$ (see \eqref{ethtil}). Thus we conclude that 
		\begin{equation}\nonumber
			\begin{split}
				\bigl(\gamma\cdot a,\,\omega(\gamma\cdot a)
				(\widetilde{\theta}_{(\gamma, a)}(v)\bigr)&\,=\,\bigl(\gamma\cdot a,\,\omega(a)(v)\bigr)\\
				\Rightarrow\, \omega(\gamma\cdot a)
				(\widetilde{\theta}_{(\gamma, a)}(v))&\,=\,\omega(a)(v).
			\end{split}
		\end{equation}
		
		Consequently, for any $(v, \,r)\,\in\, T_{(\gamma,\, a)}s^* E_G$, 
		\begin{align*}\bigl(\widehat {\widetilde{\mc{H}}}(\mu^*\omega)(\gamma,\,a)\bigr)(v,\, r) &\,=\,\omega(\gamma.a)
			(\widetilde{\theta}_{(\gamma,a)}(v))\\
			&\,=\,\omega(a)(v)\\
			&\,=\,\bigl(\widehat {\widetilde{\mc{H}}}({\rm pr}_2^* \omega)(\gamma,a)\bigr)(v,\,r).
		\end{align*}
		The last equality establishes that
		$\omega$ is a differential $1$-form on the Lie groupoid $[s^*E_G\rra E_G]$.
		
		To prove the converse, suppose that $\omega \,\in\, \Omega^1(E_G,\, \mf{g})$ is a connection $1$-form on
		the underlying principal 
		$G$-bundle $E_G\,\longrightarrow \, X_0$. Assume $\omega$ is a $\mf{g}$-valued $1$-form on the Lie groupoid 
		$[s^*E_G\rra E_G]$.
		
		Let $\mc{D}\,\colon\, \text{At}(E_G)\,\longrightarrow \,\text{ad}(E_G)$ be the homomorphism of vector bundles
		given by $\omega$. To prove the converse it
		suffices to show that the morphism $\mc{D}$ is a morphism of vector 
		bundles over the Lie groupoid $\mb{X}$.
		
		Since $\omega$ is a differential $1$-form on $[s^*E_G\rra E_G]$, we have, 
		\[\bigl(\widehat {\widetilde{\mc{H}}}(\mu^*\omega)(\gamma,\,a)\bigr)(v,\,r)\,=\,
		\bigl( \widehat {\widetilde{\mc{H}}}({\rm pr}_2^*\omega)(\gamma,\,a)\bigr)(v,\,r)\]
		for $(\gamma, \,a)\, \in \, s^*E_G$
		and $(v,\, r)\,\in\, T_{(\gamma,a)}(s^*E_G)$. Observe that the
		two equations
		$$
		\bigl(\widehat {\widetilde{\mc{H}}}(\mu^*\omega)(\gamma, a)\bigr)(v, r)\,=\,
		\omega(\gamma\cdot a)(\widetilde{\theta}_{(\gamma,\, a)}(v)),\ 
		\bigl(\widehat {\widetilde{\mc{H}}}({\rm pr}_2^*\omega)(\gamma,\,a)\bigr)(v,\, r)
		\,=\, \bigl(\omega(a)\bigr)(v)
		$$
		together imply that
		\[\bigl(\omega(\gamma\cdot a)\bigr)(\widetilde{\theta}_{(\gamma,\, a)}(v))\,=\,\bigl(\omega(a)\bigr)(v).\] 
		After plugging the last equality into the equation
		$$\overline{\mc{D}}(\gamma\cdot (a,\, v))\,=\,(\gamma\cdot a,\omega(\gamma\cdot a)(\widetilde{\theta}_{(\gamma,
			\,a)}(v)))\, ,$$
		and comparing with $\gamma\cdot \overline{\mc{D}}((a,\, v))\,=\,(\gamma\cdot a,\omega(a)(v))$, we
		obtain that
		\begin{align*}
			\{\overline{\mc{D}}(\gamma\cdot r)
			\,=\,\gamma\cdot\overline{\mc{D}}(r)\}
			\,\Longrightarrow\,\{ {\mc{D}}(\gamma\cdot [r])
			\,=\,\gamma\cdot {\mc{D}}([r])\}.
		\end{align*}
		Thus, $\mc{D}$ gives a connection on the principal $G$-bundle $(E_G\ra X_0,\ \mb{X})$. This
		completes the proof.
	\end{proof}
	
	We now recall the Lie bracket operation on the Lie algebra valued differential forms on a 
	smooth manifold $Y$. Let $\mf{g}$ be a Lie algebra. Let $\omega$ and $\eta$ be $\mf{g}$-valued differential 
	$k$-form and $l$-form respectively on $Y$. Then, the Lie bracket $[\omega,\,\eta]$ 
	is a $\mf{g}$-valued differential $(k+l)$-form on $Y$ which is constructed as follows:
	\begin{equation}\label{Equation:Liebracketofdifferentialforms}
		[\omega,\,\eta](a)(v_1,\,\cdots,\,v_{k+l})
	\end{equation}
	$$
	=\,\frac{1}{(k+l)!}\sum_{\sigma\in \Sigma_{k+l}} \text{sgn}(\sigma)[\omega(a)(v_{\sigma(1)},\cdots,v_{\sigma(k)}),
	\omega(a)(v_{\sigma(k+1)},\cdots,v_{\sigma(k+l)})]
	$$
	for all $a\,\in\, Y$ and $v_i\,\in\, T_a Y$, $1\,\leq\, i\,\leq\, k+l$ and where $\Sigma_n$ denotes the symmetric group of order $n!$.
	
	We then have the following basic property of connection forms on Lie groupoids. 
	
	\begin{lemma}\label{liebracketisdifferentialformonLiegroupoid}
		Let $\bigl(\mb{X}\,=\,[X_1\rra X_0],\, \mc{H}\bigr)$ be a Lie
		groupoid with an integrable connection, and let
		$(E_G\stackrel{\pi}{\longrightarrow} X_0,\, \mb{X})$ be
		a principal $G$-bundle over $\mb{X}$. Let $\widetilde{\mc{H}}\,\subset \,T(s^*E_G)$ be the pullback
		of the connection $\mc{H}$ to the Lie groupoid $[s^*E_G\rra E_G]$. Let $\mc{D}$
		be a connection on the principal $G$-bundle $E_G$ on $\mb{X}$, and let $\omega\,\colon\, E_G\,
		\,\longrightarrow\, T^*E_G\otimes \mf{g}$ be the connection $1$-form corresponding to $\mc{D}$
		(see Lemma \ref{Lemma:connectionformonLieGroupoid}). Then the Lie bracket $[\omega,\,\omega]\,\in\,
		\Omega^2(E_G,\, {\mf{g}})$ is a differential $2$-form on the Lie groupoid with connection
		$\bigl([s^*E_G\rra E_G],\, {\widetilde{\mc{H}}}\bigr)$.
	\end{lemma}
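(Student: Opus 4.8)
The plan is to deduce the statement from two elementary, purely pointwise identities together with Lemma~\ref{Lemma:connectionformonLieGroupoid}. Recall that, applying Definition~\ref{Definition:differentialformonaLiegroupoid} to the Lie groupoid $[s^*E_G\rra E_G]$ equipped with the pullback connection $\widetilde{\mc{H}}$, and in the notation of the proof of Lemma~\ref{Lemma:connectionformonLieGroupoid} (so that ${\rm pr}_2$ and $\mu$ are the source and target maps), the assertion that $[\omega,\,\omega]$ is a differential $2$-form on $[s^*E_G\rra E_G]$ is exactly the equality $\widehat{\widetilde{\mc{H}}}\bigl({\rm pr}_2^*[\omega,\,\omega]\bigr)=\widehat{\widetilde{\mc{H}}}\bigl(\mu^*[\omega,\,\omega]\bigr)$. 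So the whole point is to commute $\widehat{\widetilde{\mc{H}}}$ and the pullbacks past the Lie bracket.

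First I would record that the Lie bracket \eqref{Equation:Liebracketofdifferentialforms} of $\mf{g}$-valued forms commutes with pullback: for any smooth map $f$ into $E_G$ and any $\mf{g}$-valued forms $\alpha,\,\beta$ on $E_G$ one has $f^*[\alpha,\,\beta]=[f^*\alpha,\,f^*\beta]$. This is immediate from the defining formula, which only sees the values of the forms on tuples of tangent vectors and the bracket of $\mf{g}$, both transported by $df$. Applied to $f={\rm pr}_2$ and to $f=\mu$ this yields ${\rm pr}_2^*[\omega,\,\omega]=[{\rm pr}_2^*\omega,\,{\rm pr}_2^*\omega]$ and $\mu^*[\omega,\,\omega]=[\mu^*\omega,\,\mu^*\omega]$.

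Next I would verify that the horizontal-component operator of Definition~\ref{Definition:Horizontalcomponentofadifferentialform} is a homomorphism for the bracket: for any distribution $\mc{K}$ and any $\mf{g}$-valued forms $\alpha,\,\beta$, one has $\widehat{\mc{K}}\bigl([\alpha,\,\beta]\bigr)=\bigl[\widehat{\mc{K}}(\alpha),\,\widehat{\mc{K}}(\beta)\bigr]$, since $\widehat{\mc{K}}$ simply precomposes each argument of a form with the linear projection $P_{\mc{K}}$, and inserting $P_{\mc{K}}$ into every slot on the right-hand side of \eqref{Equation:Liebracketofdifferentialforms} leaves the permutation sum and the signs untouched and factors through $\widehat{\mc{K}}(\alpha)$ and $\widehat{\mc{K}}(\beta)$. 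Taking $\mc{K}=\widetilde{\mc{H}}$ and invoking Lemma~\ref{Lemma:connectionformonLieGroupoid}, which, since $\mc{D}$ is a connection on $(E_G\xra{\pi}X_0,\,\mb{X})$, ensures that $\omega$ is a $\mf{g}$-valued $1$-form on $[s^*E_G\rra E_G]$, that is $\widehat{\widetilde{\mc{H}}}({\rm pr}_2^*\omega)=\widehat{\widetilde{\mc{H}}}(\mu^*\omega)$, I would then obtain
\[
\widehat{\widetilde{\mc{H}}}\bigl(\mu^*[\omega,\,\omega]\bigr)=\bigl[\widehat{\widetilde{\mc{H}}}(\mu^*\omega),\,\widehat{\widetilde{\mc{H}}}(\mu^*\omega)\bigr]=\bigl[\widehat{\widetilde{\mc{H}}}({\rm pr}_2^*\omega),\,\widehat{\widetilde{\mc{H}}}({\rm pr}_2^*\omega)\bigr]=\widehat{\widetilde{\mc{H}}}\bigl({\rm pr}_2^*[\omega,\,\omega]\bigr),
\]
which is precisely the required identity.

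I do not expect a genuine obstacle here; the only step needing a little care is the bookkeeping in the second one, namely checking that pushing the projection $P_{\widetilde{\mc{H}}}$ through the alternating sum in \eqref{Equation:Liebracketofdifferentialforms} produces no leftover terms, so that $\widehat{\widetilde{\mc{H}}}$ distributes over $[\,\cdot\,,\,\cdot\,]$ exactly. It is also worth noting that, unlike in Lemma~\ref{differentialisdifferentialform}, the integrability of $\mc{H}$ is not used directly in this argument; it enters only through the already established Lemma~\ref{Lemma:connectionformonLieGroupoid}, since no exterior derivative is taken.
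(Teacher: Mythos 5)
Your proof is correct and follows essentially the same route as the paper: both reduce the claim to the identity $\widehat{\widetilde{\mc{H}}}({\rm pr}_2^*\omega)=\widehat{\widetilde{\mc{H}}}(\mu^*\omega)$ from Lemma~\ref{Lemma:connectionformonLieGroupoid}, the paper doing the resulting computation pointwise on pairs of tangent vectors while you package the same computation into the two observations that the bracket \eqref{Equation:Liebracketofdifferentialforms} commutes with pullback and with the horizontalization operator $\widehat{\mc{K}}$. Your closing remark that integrability is not used directly here also matches the paper's argument.
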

	
	\begin{proof}
		By definition, $[\omega,\omega](a)(v_1,\,v_2)\,=\,[\omega(a)(v_1),\,\omega(a)(v_2)]$
		for all $a\,\in\, E_G$ and $v_1,\,v_2\,\in\, T_aE_G$.
		For $(\gamma,a)\,\in\, s^*E_G$ and $(v_1,\, r_1),\,(v_2,\, r_2)\,\in\, T_{(\gamma, a)}(s^*E_G)$, we observe that
		\begin{align*}
			\widehat {\widetilde{\mc{H}}}({\rm pr}_2^*([\omega,
			\omega]))(\gamma,a)&((v_1,\, r_1),\,(v_2, \,r_2))\,=\,[\omega(a)(v_1),\,\omega(a)(v_2)],\\
			\widehat {\widetilde{\mc{H}}}(\mu^*([\omega,\,
			\omega]))(\gamma,\,a)&((v_1,\, r_1),(v_2,\, r_2))
			\,=\,[\omega(\gamma\cdot a)(\widetilde{\theta}_{(\gamma, a)}(v_1)),\,
			\omega(\gamma\cdot a)(\widetilde{\theta}_{(\gamma,\, a)}(v_2))].
		\end{align*} 
		
		As it was seen in the proof of Lemma \ref{Lemma:connectionformonLieGroupoid}, $\omega(\gamma\cdot 
		a)(\widetilde{\theta}_{(\gamma,\,a)} (v,\, r))\,=\,\omega(a)(v)$ for all $(\gamma,\,a)
		\,\in\, s^*E_G$ and for all $(v,\, r)\,\in\, T_{(\gamma, a)}(s^*E_G)$. In particular,
		\begin{align*}
			\widehat {\widetilde{\mc{H}}}(\mu^*([\omega,\,
			\omega]))(\gamma, a)((v_1,\, r_1),(v_2,\, r_2))
			&=\,[\omega(\gamma\cdot a)(\widetilde{\theta}_{(\gamma, a)}(v_1)),\,
			\omega(\gamma\cdot a)(\widetilde{\theta}_{(\gamma, a)}(v_2))]\\
			&
			=\,[\omega(a)(r_1),\,\omega(a)(r_2)]\\
			&=\,\bigl(\widehat{\widetilde{\mc{H}}}({\rm pr}_2^*([\omega,\,
			\omega]))(\gamma,\, a)\bigr)\bigl((v_1,\, r_1),(v_2,\, r_2)\bigr).
		\end{align*}
		Thus, we have established that \[\widehat{\widetilde{\mc{H}}}(\mu^*([\omega,
		\omega]))\,=\,\widehat{\widetilde{\mc{H}}}({\rm pr}_2^*([\omega,\,
		\omega]))\, .\]
		So $[\omega,\, \omega]\,\in\, \Omega^2(E_G,\, {\mf{g}})$ is a differential $2$-form on the Lie
		groupoid $\bigl([s^*E_G\rra E_G], \,{\widetilde {\mc{H}}}\bigr)$.
	\end{proof}
	
	Let $(E_G\ra X_0,\, \mb{X}=[X_1\rra X_0])$ be a principal $G$-bundle over the
	Lie groupoid with connection $\bigl(\mb{X},\, \mc{H}\bigr)$, and let
	\[0\,\longrightarrow\,{\rm ad}(E_G)\,\longrightarrow \, {\rm At}(E_G)\,\longrightarrow \,TX_0\,\longrightarrow\, 0\]
	be the associated Atiyah sequence of vector bundles. Let $\mc{D}\colon TX_0\,\longrightarrow
	\,{\rm At}(E_G)$ be a connection on the principal $G$-bundle $(E_G\ra X_0,\, \mb{X})$.
	Then $\mc{D}$, as a connection on the underlying principal $G$-bundle $E_G\,\longrightarrow\, X_0$, produces
	the $\mf{g}$-valued \textit{curvature $2$-form}
	$$\mc{K}_{\mc{D}}\,\colon\, X_0\,\longrightarrow\, \Lambda^2_{\mf{g}}T^*X_0$$ on the manifold $X_0$ satisfying
	the Maurer-Cartan formula 
	\[\pi^*\mc{K}_{\mc{D}}\,=\,d\omega+[\omega,\,\omega]\,.\]
	We will study this curvature form, and forms related to it, in more detail in Section 
	\ref{Section:CWmapforprincipalbundleoverLiegroupoid}.
	
	The following lemma was stated without proof in the note \cite{MR3150770}.
	
	\begin{lemma}\label{CurvatureformIsDifferentialformOnLiegroupoid}
		Let $(E_G\ra X_0,\,\,\mb{X})$ be a principal $G$-bundle over the Lie groupoid
		with integrable connection $\bigl(\mb{X},\, \mc{H}\bigr)$, and let
		$\widetilde{\mc{H}}\,\subset\, T(s^*E_G)$ be the pullback connection on the Lie groupoid $[s^*E_G\rra E_G]$. Let
		\[0\,\longrightarrow\,\,{\rm ad}(E_G)\,\longrightarrow\, {\rm At}(E_G)\,\longrightarrow \,TX_0\,\longrightarrow\, 0\]
		be the Atiyah sequence associated to $(E_G\ra X_0,\,\mb{X})$, and let
		$\mc{D}\colon TX_0\, \longrightarrow \, {\rm At}(E_G)$ be a connection on the principal $G$-bundle
		$(E_G\ra X_0,\,\mb{X})$. 
		Let $\mc{K}_{\mc{D}}$ be the curvature $2$-form of the connection $\mc{D}$ on the underlying principal
		$G$-bundle $E_G\,\longrightarrow\, X_0$. Then $\mc{K}_{\mc{D}}$
		is a differential $2$-form on the Lie groupoid $\mb{X}$. 
	\end{lemma}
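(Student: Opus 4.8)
The plan is to deduce this from the results already established for the connection $1$-form $\omega$ and for the Lie bracket $[\omega,\,\omega]$, together with the compatibility between the de Rham cohomology of the Lie groupoid $[s^*E_G\rra E_G]$ and that of $\mb{X}$. The Maurer-Cartan formula $\pi^*\mc{K}_{\mc{D}}\,=\,d\omega+[\omega,\,\omega]$ expresses $\pi^*\mc{K}_{\mc{D}}$ as a sum of terms that we have already shown to be differential forms on $[s^*E_G\rra E_G]$, and $\pi\,\colon\, E_G\,\longrightarrow\, X_0$ is a surjective submersion; so the statement will follow from Lemma~\ref{Lemma:pullbackofadifferentialform} (in its form applicable to the morphism of Lie groupoids $({\rm pr}_1,\,\pi)\,\colon\,[s^*E_G\rra E_G]\,\longrightarrow\,\mb{X}$) once we know that $\pi^*\mc{K}_{\mc{D}}$ is a differential $2$-form on $[s^*E_G\rra E_G]$.

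First I would observe that, by Lemma~\ref{Lemma:connectionformonLieGroupoid}, the connection $1$-form $\omega\,\in\,\Omega^1(E_G,\,\mf{g})$ associated to $\mc{D}$ is a $\mf{g}$-valued $1$-form on the Lie groupoid $[s^*E_G\rra E_G]$ (here we use that $\mc{D}$ is a connection on $(E_G\ra X_0,\,\mb{X})$, not merely on the underlying bundle). Since the connection $\mc{H}$ is integrable, so is the pullback connection $\widetilde{\mc{H}}$ by Lemma~\ref{pullbackofintegrableconnection}, and therefore Lemma~\ref{differentialisdifferentialform}, applied to the Lie groupoid $[s^*E_G\rra E_G]$, shows that $d\omega$ is also a $\mf{g}$-valued differential $2$-form on $[s^*E_G\rra E_G]$. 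Next, Lemma~\ref{liebracketisdifferentialformonLiegroupoid} gives that $[\omega,\,\omega]\,\in\,\Omega^2(E_G,\,\mf{g})$ is a differential $2$-form on $[s^*E_G\rra E_G]$. (The $\mf{g}$-valued case is handled componentwise: after choosing a basis of $\mf{g}$, all the cited lemmas apply to each scalar component, so the $\mf{g}$-valued statements follow.) Adding these, $d\omega+[\omega,\,\omega]\,=\,\pi^*\mc{K}_{\mc{D}}$ is a $\mf{g}$-valued differential $2$-form on $[s^*E_G\rra E_G]$.

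Finally I would invoke Lemma~\ref{Lemma:pullbackofadifferentialform} for the morphism of Lie groupoids with connection $({\rm pr}_1,\,\pi)\,\colon\,\bigl([s^*E_G\rra E_G],\,\widetilde{\mc{H}}\bigr)\,\longrightarrow\,(\mb{X},\,\mc{H})$: this is a morphism of Lie groupoids with connection because $\widetilde{\mc{H}}\,=\,(d{\rm pr}_1)^{-1}(\mc{H})$ maps onto $\mc{H}$ under $d{\rm pr}_1$, and the map ${\rm pr}_1\,\colon\,s^*E_G\,\longrightarrow\,X_1$ is a surjective submersion since $\pi$ is. Since $\pi^*\mc{K}_{\mc{D}}$ is a differential $2$-form on $[s^*E_G\rra E_G]$, Lemma~\ref{Lemma:pullbackofadifferentialform} concludes that $\mc{K}_{\mc{D}}$ is a differential $2$-form on $\mb{X}$, as required.

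**The main obstacle** I anticipate is purely bookkeeping: making sure that the $\mf{g}$-valued versions of Lemmas~\ref{differentialisdifferentialform} and \ref{Lemma:pullbackisadifferentialform}/\ref{Lemma:pullbackofadifferentialform} are legitimately available (they are, by working componentwise in a basis of $\mf{g}$, since all the horizontal-projection conditions are $\mb{R}$-linear and commute with $d$), and verifying that the morphism $({\rm pr}_1,\,\pi)$ genuinely satisfies $d{\rm pr}_1(\widetilde{\mc{H}})\,=\,\mc{H}$ rather than merely $\widetilde{\mc{H}}\subseteq (d{\rm pr}_1)^{-1}(\mc{H})$ — this follows from surjectivity of $d{\rm pr}_1$ on the relevant fibers, which in turn follows from $\pi$ being a submersion. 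No genuinely new estimate or construction is needed beyond assembling the earlier lemmas.
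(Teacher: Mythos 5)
Your proposal is correct and follows essentially the same route as the paper: decompose $\pi^*\mc{K}_{\mc{D}}=d\omega+[\omega,\,\omega]$, use Lemmas~\ref{Lemma:connectionformonLieGroupoid}, \ref{differentialisdifferentialform} (via integrability of $\widetilde{\mc{H}}$) and \ref{liebracketisdifferentialformonLiegroupoid} to see each summand is a form on $[s^*E_G\rra E_G]$, then descend along $\pi$. The only cosmetic difference is that you invoke Lemma~\ref{Lemma:pullbackofadifferentialform} directly for the morphism $({\rm pr}_1,\,\pi)$, whereas the paper cites the packaged Lemma~\ref{pullbackdifferentialformonLiegroupoid}, which rests on exactly that lemma.
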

	
	\begin{proof}
		By Lemma \ref{pullbackdifferentialformonLiegroupoid} it suffices to prove that $\pi^*\mc{K}_{\mc{D}}\,\colon\, E_G\,
		\longrightarrow\,\Lambda^2_{\mf{g}}T^*E_G$ is a differential $2$-form on the Lie groupoid $[s^*E_G\rra E_G]$.
		
		As we are assuming that the connection $\mc{H}$ is integrable, it follows that the connection ${\widetilde {\mc{H}}}$ is 
		integrable as well (see Lemma~\ref{pullbackofintegrableconnection}). By Lemma 
		\ref{Lemma:connectionformonLieGroupoid}, $\omega$ is a differential $1$-form on the Lie groupoid $[s^*E_G\rra 
		E_G]$. Integrability of $\widetilde{\mc{H}}$ implies that $d\omega$ is a differential $2$-form on the Lie groupoid 
		$[s^*E_G\rra E_G]$ (see Lemma \ref{differentialisdifferentialform}). Whereas Lemma 
		\ref{liebracketisdifferentialformonLiegroupoid} implies the same for $[\omega,\omega]$. Thus 
		$d\omega+[\omega,\,\omega]\,=\,\pi^*\mc{K}_{\mc{D}}$ is a differential $2$-form on the Lie groupoid $[s^*E_G\rra 
		E_G]$.
	\end{proof}
	
	As a brief digression, let us compare our definition of a connection on a principal bundle over a Lie groupoid with that given in \cite{MR2270285}. While the definition of a principal $G$-bundle
	$(E_G\ra X_0,\,[X_1\rra X_0])$ in \cite{MR2270285} is the same as ours, the definition of a connection
	on $(E_G\ra X_0,\, [X_1\rra X_0])$ there is given as follows (see \cite[Definition 3.5]{MR2270285})
	
	\begin{definition}\label{Definition:ConnectionasinCW} 
		Let $(E_G\ra X_0,\,\mb{X}\,=\,[X_1\rra X_0])$ be a principal $G$-bundle over a Lie groupoid $\mb{X}$. A
		{\it connection} on $(E_G\ra X_0,\, \mb{X})$ is a connection $\omega\,\in\, {\Omega}^1(E_G,\, \mf{g})$
		on the underlying principal $G$-bundle $E_G\,\longrightarrow\, X_0$ such that
		${\rm pr}_2^{*}\omega-\mu^{*}\omega\,=\,0$.
	\end{definition}
	
	Now let us consider an \'etale Lie groupoid $[X_1 \rra X_0]$ with integrable connection $\mc{H}=T X_1$, as 
	in Example \ref{Example:Etalegrouoidcoonnection}. Then using Lemma~\ref{Lemma:connectionformonLieGroupoid} it is 
	straightforward to see that both definitions coincide in the case of principal $G$-bundles over \'etale Lie 
	groupoids. Moreover, a connection always exist in this case.
	
	\begin{example} Let $G$ be a Lie group and $[G\,\rra\, *]$ its associated Lie groupoid. Then $\pi\,:\, G
		\,\longrightarrow\, *$ is a principal $G$-bundle over $[G\,\rra\, *]$, where the action is given by (left)
		translations. A connection can exist only if $G$ is discrete.
	\end{example}

	In addition, it is shown in \cite[Proposition 3.13]{MR2270285} that the latter definition of a connection is 
	indeed Morita invariant and therefore gives a notion of connection and integrable connection of principal 
	$G$-bundles over differentiable stacks. In particular, it follows that principal $G$-bundles over Deligne-Mumford 
	stacks and therefore also over orbifolds always admit a connection (see also \cite{BCKN1} for more details on the relation with connections on differentiable stacks). In light of the last example, it follows that the universal principal 
	$G$-bundle $*\,\longrightarrow\, \mr{B}G$ over the classifying stack $\mr{B}G$ can admit a connection
	only if the group $G$ is discrete.
	
	Our definition of the connection on a principal $G$-bundle over a Lie groupoid (equipped with a Lie groupoid connection) is slightly less rigid than the definition given in \cite{MR2270285} (Definition~\ref{Definition:ConnectionasinCW}). We compare the definitions in the following example. 
	
	\begin{example}\label{Example:GHequivariantconnection}
		Let $G,\, H$ be a pair of Lie groups. In Example \ref{Example:GHequivariant} we have seen 
		an $H$-equivariant smooth principal $G$-bundle $P\,\longrightarrow\,
		M$ over a smooth manifold $M$ defines a principal $G$-bundle over the action Lie groupoid
$[M\times H\rra M]$ for the smooth action $\bigl((m,\, h),\, p\bigr)\,\longrightarrow\, p\cdot h,$ where
$\bigl((m,\, h),\, p\bigr)
\,\in\, s^{*} P\,=\,(M\times H)\times_{s, M, \pi}P.$
		The action Lie groupoid also admits a natural integrable connection $\mc{H}_{(m, h)}(M\times H)\,=\,T_mM\,\subset\, T_{(m, h)}(M\times H).$	Observe that the differential of the target map of the pull-back Lie groupoid $[s^{*}P\rra P]$ is given by
		$\bigl((a,\, \Psi), \,v\bigr)\,\longmapsto\, v\cdot h+\zeta_{p\cdot h}(\Psi\cdot h^{-1}),$
		where $\bigl((a,\, \Psi),\, v\bigr)\,\in\, T_{((m, h), p)}\bigl(s^* (M\times H) \bigr)	$ and
$\zeta (\Psi\cdot h^{-1})$ denotes the fundamental vector field generated by the Lie algebra element
$\Psi\cdot h^{-1}$ of $H.$ Then
$$\widehat{\widetilde {\mc{H}}}\bigl(s^{*}\omega_{((m, h), p)} \bigl((a, \,\Psi),\, v\bigr)\bigr)
\,=\,\omega_p({v})$$ and 
$\widehat{\widetilde {\mc{H}}}\bigl(t^{*}\omega_{((m, h), p)}\bigl((a, \,\Psi),\, v\bigr)\bigr)
\,=\,\omega_{p\cdot h}({v}\cdot h).$ Thus by Lemma~\ref{Lemma:connectionformonLieGroupoid} we conclude that $\omega$ is a connection on the $G$-bundle $P\ra M$ over the Lie groupoid $[M\times H\rra M]$ for the given connection $\mc{H}_{(m, h)}(M\times H)$ if and only if $\omega$ is $H$-invariant. On the other hand, we see that $\omega$ is a connection as per Definition~\ref{Definition:ConnectionasinCW}, if and only if $\omega$ is $H$-invariant and $\omega$ vanishes on the fundamental vector field generated by the elements of the Lie algebra of $H;$ that is $\omega$ is basic with respect to the action of $H.$
\end{example}

	\subsection{Pullback connection along morphism of Lie groupoids}\label{pullbackconnection}
	
	Let
	\begin{equation}\label{ezz}
		(F,\,f)\,:\,\bigl(\mb{X}\,:=\,[X_1\rra X_0], \ \mc{H}_\mb{X}\bigr)\,\longrightarrow\, 
		\bigl(\mb{Y}\,:=\,[Y_1\rra Y_0], \ \mc{H}_\mb{Y}\bigr)
	\end{equation}
	be a morphism of Lie groupoids with fixed connections.
	Let $(E_G\xra{\pi} Y_0,\, \mb{Y})$ be a principal $G$-bundle over the Lie groupoid $\mb{Y}$.
	Consider the Atiyah sequence of vector bundles over the Lie groupoid $\mb{Y}$ 
	$$
	0\,\longrightarrow\, (E_G\times \mf{g})/G\,\xra{j^{/G}}\, (TE_G)/G\,\xra{\pi_*^{/G}}\, TY_0\,\longrightarrow\,0
	$$
	associated to the principal $G$-bundle $(E_G\xra{\pi} Y_0,\,\mb{Y})$.
	Let $(X_0\times_{Y_0}E_G\xra{{\rm pr}_1} X_0,\ \mb{X})$ be the principal $G$-bundle obtained by pulling back
	the principal $G$-bundle $(E_G\ra Y_0,\,\mb{Y})$ along the morphism of Lie groupoids
	$(F,\,f)$ in \eqref{ezz}. Consider the Atiyah sequence
	$$
	0\,\longrightarrow\, ((X_0\times_{Y_0}E_G)\times \mf{g})/G\,\xra{j^{/G}}\, (T(X_0\times_{Y_0}E_G))/G
	\,\xra{({\rm pr}_1)_*^{/G}} \,TX_0\,\longrightarrow\, 0 
	$$
	associated to the principal $G$-bundle $(X_0\times_{Y_0}E_G\xra{{\rm pr}_1} X_0,\, \mb{X})$.
	
	Let $\mathcal{D}$ be a connection on the principal $G$-bundle $(E_G\ra Y_0,\,\mb{Y})$. Let
	$\omega \,\in\, \Omega^1(E_G,\, \mf{g})$ be the associated connection $1$-form on the
	Lie groupoid $[s^*E_G\rra E_G];$ so $\omega$ satisfies the equation
	\begin{equation}\label{Eq:compat}
		\widehat{\widetilde{\mc{H}_{\mb{Y}}}}(({\rm pr}_2^{\mb{Y}})^*\omega)\,=\,
		\widehat{\widetilde{\mc{H}_{\mb{Y}}}}((\mu_\mb{Y})^*\omega).
	\end{equation}
	The pulled back form ${\rm pr}_2^*\omega\,:\,(X_0\times_{Y_0} E_G)\,\longrightarrow\,
	\Lambda^1_{\mf{g}}T^*(X_0\times_{Y_0}E_G)$ defines a connection $1$-form on the principal
	$G$-bundle ${\rm pr}_1\,:\,X_0\times_{Y_0}E_G\,\longrightarrow\, X_0$ over the manifold
	$X_0$. We show that this differential form ${\rm pr}_2^*\omega\,:\,(X_0\times_{Y_0} E_G)\,
	\longrightarrow\,
	\Lambda^1_{\mf{g}}T^*(X_0\times_{Y_0}E_G)$ is in fact a differential form on the
	Lie groupoid $[(X_0\times_{Y_0}E_G)\times_{E_G}(Y_1\times_{Y_0}E_G)\,\rra\,
	(X_0\times_{Y_0}E_G)]$. Then, by the Lemma \ref{Lemma:connectionformonLieGroupoid},
	${\rm pr}_2^*\omega$ defines a connection $f^*\mc{D}$ on the principal $G$-bundle
	$(X_0\times_{Y_0}E_G\ra X_0,\, \mb{X})$. 
	
	Take $(\gamma,\,(a,\,e))\,\in\, X_1\times_{X_0}(X_0\times_{Y_0}E_G)$ and 
	$(v,\,(w,\,l))\,\in\, T_{(\gamma,(a,e))}(X_1\times_{X_0}(X_0\times_{Y_0}E_G))$.
	It is straightforward to check the following two conditions:
	\begin{align*}
		&\widehat{\widetilde{\mc{H}_{\mb{X}}}}\left(({\rm pr}_2^{\mb{X}})^*({\rm pr}_2^*\omega)\right)
		(\gamma,(a,\,e))(v,\,(w,\,l))
		\,=\,\widehat{(\widetilde{\mc{H}_{\mb{Y}}}}(({\rm pr}_2^{\mb{Y}})^*\omega))
		(F(\gamma),\,F(\gamma)\cdot e)(F_{*,\gamma}(v),\,l)\\
		&\widehat{\widetilde{\mc{H}_{\mb{X}}}}\left((\mu_{\mb{X}})^*({\rm pr}_2^*\omega)\right)
		(\gamma,(a,\,e))(v,\,(w,\,l))
		\,=\,(\widehat{(\widetilde{\mc{H}_{\mb{Y}}}}((\mu_{\mb{Y}})^*\omega))(F(\gamma),\,F(\gamma)\cdot e)
		(F_{*,\gamma}(v),\,l).
	\end{align*}
	Using \eqref{Eq:compat} we conclude that
	\[\widehat{\widetilde{\mc{H}_{\mb{X}}}}\left(({\rm pr}_2^{\mb{X}})^*({\rm pr}_2^*\omega)\right)
	\,=\,\widehat{\widetilde{\mc{H}_{\mb{X}}}}\left((\mu_{\mb{X}})^*({\rm pr}_2^*\omega)\right).\]
	
	Therefore, we obtain the following result.
	
	\begin{proposition}\label{Prop:pullbackconn}
		Let $(F,\,f)\,:\,\bigl(\mb{X}\,=\,[X_1\rra X_0],\, \mc{H}_{\mb{X}}\bigr)\,
		\longrightarrow\, \bigl(\mb{Y}=[Y_1\rra Y_0],\, \mc{H}_{\mb{Y}}\bigr)$ be a morphism of
		Lie groupoids with connections.
		Let $(E_G\xra{\pi} Y_0,\,\mb{Y})$ be a principal $G$-bundle over the Lie groupoid
		$\mb{Y}$. Let $\omega \,\in\, \Omega^1(E_G,\, \mf{g})$
		be a connection form giving a splitting of the Atiyah sequence 
		$$
		0\,\longrightarrow\, (E_G\times \mf{g})/G\,\xra{j^{/G}}\, (TE_G)/G\,\xra{\pi_*^{/G}}\, TY_0
		\,\longrightarrow\, 0
		$$
		for the principal $G$-bundle $(E_G\ra Y_0,\, \mb{Y}).$
		Then ${\rm pr}_2^*\omega$ defines a splitting of the Atiyah sequence of vector bundles
		over the Lie groupoid $\mb{X}$
		$$
		0\,\longrightarrow\, ((X_0\times_{Y_0}E_G)\times \mf{g})/G\,\xra{j^{/G}}\,
		(T(X_0\times_{Y_0}E_G))/G\,\xra{({\rm pr}_1)_*^{/G}}\, TX_0\,\longrightarrow\, 0
		$$
		for the principal $G$-bundle 
		$\bigl(X_0\times_{Y_0}E_G\xra{{\rm pr}_1} X_0,\,
		\mb{X}\bigr)$.
	\end{proposition}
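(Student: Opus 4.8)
The plan is to reduce the assertion to the compatibility criterion of Lemma~\ref{Lemma:connectionformonLieGroupoid}, applied this time to the pulled-back principal bundle over $\mb{X}$, and then to verify that criterion by transporting the compatibility equation \eqref{Eq:compat} for $\omega$ over $\mb{Y}$ along the morphism $(F,f)$. First I would recall the classical fact that ${\rm pr}_2^*\omega\,\in\,\Omega^1(X_0\times_{Y_0}E_G,\,\mf{g})$ is a connection $1$-form on the pulled-back principal $G$-bundle ${\rm pr}_1\colon X_0\times_{Y_0}E_G\,\longrightarrow\, X_0$ over the manifold $X_0$: the pullback of a $G$-equivariant form is again $G$-equivariant, and the restriction of ${\rm pr}_2^*\omega$ to a fibre of ${\rm pr}_1$ is identified with the restriction of $\omega$ to the corresponding fibre of $\pi$, hence coincides with the Maurer--Cartan form. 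Thus by Lemma~\ref{Lemma:connectionformonLieGroupoid} it remains only to check that ${\rm pr}_2^*\omega$ is a $\mf{g}$-valued $1$-form on the Lie groupoid $[(X_0\times_{Y_0}E_G)\times_{E_G}(Y_1\times_{Y_0}E_G)\,\rra\,(X_0\times_{Y_0}E_G)]$; once this is done, that lemma produces a splitting of the Atiyah sequence of vector bundles over $\mb{X}$, which is by definition the connection $f^*\mc{D}$.

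Next I would make the bundle-level morphism explicit. The morphism $(F,f)$ of Lie groupoids with connections lifts to a morphism of Lie groupoids
\[[(X_0\times_{Y_0}E_G)\times_{E_G}(Y_1\times_{Y_0}E_G)\,\rra\,(X_0\times_{Y_0}E_G)]\,\longrightarrow\,[s^*E_G\,\rra\, E_G]\]
covering the bundle map ${\rm pr}_2\colon X_0\times_{Y_0}E_G\,\longrightarrow\, E_G$; because $dF(\mc{H}_{\mb{X}})\,=\,\mc{H}_{\mb{Y}}$ and the pullback connections are defined as preimages under the bundle projections (Lemma~\ref{pullbackofintegrableconnection}), this lifted morphism again carries $\widetilde{\mc{H}_{\mb{X}}}$ onto $\widetilde{\mc{H}_{\mb{Y}}}$. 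The two displayed identities in the paragraph preceding the statement express exactly that, at a point $(\gamma,(a,e))$, the horizontal components of $({\rm pr}_2^{\mb{X}})^*({\rm pr}_2^*\omega)$ and of $(\mu_{\mb{X}})^*({\rm pr}_2^*\omega)$ equal the horizontal components, at the image point $(F(\gamma),\,F(\gamma)\cdot e)$, of $({\rm pr}_2^{\mb{Y}})^*\omega$ and of $(\mu_{\mb{Y}})^*\omega$ evaluated on the pushed-forward tangent vectors. Granting these, the hypothesis \eqref{Eq:compat} for $\omega$ over $\mb{Y}$ immediately forces $\widehat{\widetilde{\mc{H}_{\mb{X}}}}(({\rm pr}_2^{\mb{X}})^*({\rm pr}_2^*\omega))\,=\,\widehat{\widetilde{\mc{H}_{\mb{X}}}}((\mu_{\mb{X}})^*({\rm pr}_2^*\omega))$, as required.

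The main obstacle is therefore the verification of those two identities, which is an honest but purely mechanical unwinding of definitions. One identifies the tangent space of the relevant fibre product at $(\gamma,(a,e))$, computes the projection $P_{\widetilde{\mc{H}_{\mb{X}}}}$ in terms of $P_{\mc{H}_{\mb{X}}}$ using \eqref{ethtil} (recall that $P_{\mc{H}_{\mb{X}}}$ sees only the $\mc{H}_{\mb{X}}$-component in the $TX_1$-direction and is trivial on $\ker(ds)$), pushes forward along $dF$, and uses $dF(\mc{H}_{\mb{X}})\,=\,\mc{H}_{\mb{Y}}$ together with the compatibility of $F$ with the source, target and multiplication maps to see that the map $\widetilde{\theta}$ for $\mb{X}$ is intertwined with $\widetilde{\theta}$ for $\mb{Y}$. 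Since ${\rm pr}_2^*\omega$ evaluated on a tangent vector depends only on its $E_G$-component and $\omega$ is already a form on $[s^*E_G\,\rra\, E_G]$, the contributions from the $X_0$- and $Y_1$-directions drop out and both sides collapse to expressions of the type $\omega(a)(v)$, yielding the claimed equalities. I would record these two identities with only a brief indication of this computation, as in the body of the paper, rather than carrying out every term.
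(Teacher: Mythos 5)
Your proposal follows essentially the same route as the paper: reduce to the criterion of Lemma~\ref{Lemma:connectionformonLieGroupoid} for the pulled-back bundle, establish the two displayed pullback identities relating the horizontal components over $\mb{X}$ to those over $\mb{Y}$ via $dF(\mc{H}_{\mb{X}})=\mc{H}_{\mb{Y}}$, and then invoke \eqref{Eq:compat}. The paper likewise leaves those two identities as a ``straightforward to check'' computation, so your level of detail matches the source.
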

	
	We will call the above connection on $\bigl(X_0\times_{Y_0}E_G\xra{{\rm pr}_1} X_0,\,
	\mb{X}\bigr)$ the \textit{pullback of the connection} $\omega$.
	
	\section{Chern-Weil map and characteristic classes}\label{Section:CWmapforprincipalbundleoverLiegroupoid}
	
	We will now describe the Chern-Weil theory and associated characteristic classes for principal bundles over Lie groupoids 
	with integrable connections.
	
	\subsection{Chern-Weil theory for principal bundles}
	
	Let $\mb{X}\,=\,[X_1\rra X_0]$ be a Lie groupoid equipped with an integrable connection 
	$\mc{H}\,\subset \,TX_1$. Let $(E_G\ra X_0,\, \mb{X})$ be a principal $G$-bundle. Let 
	$\widetilde{\mc{H}}\,\subset\, T(s^*E_G)$ be the pullback connection on the Lie groupoid
	$[s^*E_G\,\rra\, E_G]$. Furthermore, let
	\[ 0\,\longrightarrow\,
	{\rm ad}(E_G) \,\longrightarrow\, {\rm At}(E_G)\,\longrightarrow\, TX_0
	\,\longrightarrow\, 0 \] be the Atiyah 
	sequence for the principal $G$-bundle $\bigl(E_G\ra X_0,\, \mb{X}\bigr)$.
	
	Let $\mc{D}$ be a connection on the principal $G$-bundle $(E_G\ra X_0,\,\mb{X})$, given by 
	a homomorphism $\mc{D}\,\colon\, {\rm At}(E_G)
	\,\longrightarrow\, {\rm ad}(E_G)$, or, equivalently, given by a homomorphism
	$\mc{D}\,\colon\, TX_0\,\longrightarrow\, {\rm At}(E_G)$. Let $\omega \,\in\,
	\Omega^1(E_G,\, {\mf{g}})$ be the corresponding connection $1$-form on the Lie
	groupoid $[s^*E_G\,\rra\, E_G]$. Consider the curvature $2$-form
	$\mc{K}_{\mc{D}}\,\in\, \Omega^2(X_0,\, \mf{g})$ for the connection $\mc{D}$;
	so $\mc{K}_{\mc{D}}$ is a $2$-form on the Lie groupoid $[X_1\rra X_0]$. 
	We have the pullback form
	$$\Omega\,=\,\pi^* \mc{K}_{\mc{D}}\,\in\, \Omega^2(E_G,\, \mf{g})$$ on the groupoid $[s^*E_G
	\,\rra\, E_G]$.
	
	We briefly recall the construction of the Chern-Weil morphism for the above pair $(\omega,\,\Omega)$
	(see for example \cite{Kob-Nomizu}). Let 
	$\text{Sym}^k(\mf{g}^*)$ be the set of all symmetric $k$-linear mappings $\mf{g}\times\cdots\times 
	\mf{g}\,\longrightarrow\, {\mathbb R}$ on the Lie algebra $\mf{g}$. Define the adjoint action of the Lie group 
	$G$ on $\text{Sym}^k(\mf{g}^*)$ by
	$$({\rm Ad}_g f)(x_1,\,\cdots,\, x_k)\,:=\,f({\rm Ad}_g x_1,\, \cdots,\, 
	{\rm Ad}_g x_k)$$ for all $f\,\in \,\text{Sym}^k(\mf{g}^*)$
	and $g\, \in\, G$. Let us define the ${\rm Ad}(G)$-invariant 
	forms as \[\text{Sym}^k(\mf{g}^*)^G\,:=\, \big\{f\,\in\,
	\text{Sym}^k(\mf{g}^*)\,\,\mid\,\, {\rm Ad}_g f\,=\,f\,\ \forall\,\, g\,\in\, G\big\}.\]
	To any given $f\,\in\, \text{Sym}^k(\mf{g}^*)^G$ we assign a closed $2k$-form 
	$f(\Omega)$ on $E_G$ defined by
	\begin{equation}\label{Eq:2kform}
		f(\Omega)(X_1,\,\cdots,\, X_{2k}):=\, \frac{1}{(2k)!}
		\sum_{\sigma\in \Sigma_{2k}}\epsilon{_{\sigma}}f\bigl(\Omega(X_{\sigma(1)},\, X_{{\sigma(2)}}),\, \cdots,\, 
		\Omega(X_{\sigma(2k-1)},\, X_{{\sigma(2k)}}) \bigr)\, ,
	\end{equation}for vector fields $X_i$ on $E_G$ for $1\leq i\leq 2k$
	where $\Sigma_{2k}$ is the symmetric group of order $(2k)!$, and
	$\epsilon_{\sigma}\, \in\, \{\pm 1\}$ is the parity of the permutation $\sigma\,\in\, \Sigma_{2k}$.
	Consider the map $\text{Sym}^k(\mf{g}^*)^G\,\longrightarrow\,
	\Omega^{2k}(E_G)$ given by $f\,\longmapsto \,f(\Omega)$. It has the following properties:
	\begin{enumerate}
		\item There exists a unique closed $2k$-form ${\widetilde {f(\Omega)}}_{\omega}\,\in\,
		\Omega^{2k}(X_0)$ such that $\pi^*{\widetilde {f(\Omega)}}_{\omega}\,=\,f(\Omega)$.
		
		\item The map $\text{Sym}^k(\mf{g}^*)^G\,\longrightarrow\, H_{\rm dR}^{2k}(X_0,
		\,{\mathbb R})$ given by $f\,\longmapsto\, [{\widetilde {f(\Omega)}}_{\omega}]$
		is independent of the choice of the connection $\omega$ on the principal $G$-bundle
		$E_G\ra X_0$.
	\end{enumerate}
	
	By linear extension the map in (2) naturally defines a homomorphism of algebras, namely
	\begin{equation}\label{Eq:ChernweilX0}
		\begin{split}
			\text{Sym}(\mf{g}^*)^G\,:=\,\sum_{k=0}^{\infty}\text{Sym}^k(\mf{g}^*)^G&
			\,\longrightarrow\, H_{\rm dR}^{*}(X_0,\, {\mathbb R})\\
			f &\,\longmapsto\, [{\widetilde {f(\Omega)}}_{\omega}]\, .
		\end{split}
	\end{equation}
	The map in \eqref{Eq:ChernweilX0} is called the \textit{Chern-Weil homomorphism}. 
	
	Since $\pi\,:\,E_G\,\longrightarrow\, X_0$
	is a surjective submersion, it follows that
	\begin{equation}\label{@kcloseddescent}
		\widetilde{f(\Omega)}_{\omega}\,=\, f(\mc{K}_{\mc{D}})\, .
	\end{equation}
	
	We have so far treated the pair $(\omega,\, \Omega)$ only as connection and 
	curvature of the underlying principal $G$-bundle $E_G\,\longrightarrow X_0$. In particular,
	if $\omega'$ is any other connection $1$-form on the principal $G$-bundle $E_G\,
	\longrightarrow\, X_0$, then 
	$${\widetilde {f(\Omega)}}_{\omega}- {\widetilde {f(\Omega ')}}_{\omega '}\,=\,d\widetilde\Phi$$
	for some $(2k-1)$-form $\widetilde\Phi$ on $X_0$. However in order to build
	Chern-Weil theory for a
	principal bundle over a Lie groupoid $\mb{X}\,=\,[X_1\rra X_0]$ we need to show the following two properties:
	\begin{enumerate}
		\item $\pi^*{\widetilde {f(\Omega)}}_{\omega}\,=\,f(\mc{K}_{\mc{D}})$ is a closed $2k$-form
		on $\mb{X}$, and
		
		\item the $(2k-1)$-form $\widetilde\Phi$ on $X_0$ is actually a $(2k-1)$-form on $\mb{X}$,
		when both $\omega$ and $\omega'$ are connection forms on $[s^*E_G\rra E_G]$.
	\end{enumerate}
	In turn we will get a homomorphism
	\begin{equation}\label{Eq:ChernweilX1X0}
		\begin{split}
			\text{Sym}(\mf{g}^*)^G &\,\longrightarrow\, H_{\rm dR}^{*}(\mb{X},\, \mc{H})\\
			f &\,\longmapsto\, [f(\mc{K}_{\mc{D}})].
		\end{split}
	\end{equation}
	The part (1) of the above two statements was stated in the note \cite{MR3150770}.
	Both of the statements will be proved here.
	
	\begin{theorem}\label{f(K_D)isadifferentialform}
		Let $\mb{X}\,=\,[X_1\rra X_0]$ be a Lie groupoid equipped with an integrable connection
		$\mc{H}\,\subset\, TX_1$. Let $(E_G\ra X_0,\,\mb{X})$ be a principal $G$-bundle over $\mb{X}$. Let
		$\mc{D}$ be a connection on the principal $G$-bundle $(E_G\ra X_0,\,
		\mb{X})$, and let $\mc{K}_{\mc{D}}$ be the associated curvature $2$-form on $\mb{X}$. Then
		the following two hold:
		\begin{enumerate}
			\item $f(\mc{K}_{\mc{D}})$ is a differential $2k$-form on $\mb{X}$. 
			
			\item The map ${\rm{Sym}}(\mf{g}^*)^G\,\longrightarrow\, H_{\rm dR}^{*}(\mb{X},\,\mc{H})$ defined by
			$f\,\longmapsto\, [f(\mc{K}_{\mc{D}})]$ does not depend on the connection $\mc{D}$
			on the principal $G$-bundle $(E_G\ra X_0,\,\mb{X})$.
		\end{enumerate}
	\end{theorem}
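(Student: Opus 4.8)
The plan is to deduce both statements by pulling everything back to the principal $G$-bundle $[s^*E_G\rra E_G]$, where the classical Chern–Weil machinery is already available, and then using the surjective-submersion descent criterion of Lemma~\ref{pullbackdifferentialformonLiegroupoid} (together with the integrability results of Section~\ref{Section:deRhamcohomologyofLiegroupoidwithConnection}) to push the conclusions back down to $\mb{X}$. Concretely, by Lemma~\ref{pullbackdifferentialformonLiegroupoid} it suffices to show that $\pi^*(f(\mc{K}_{\mc{D}})) = f(\Omega)$ is a differential $2k$-form on $[s^*E_G\rra E_G]$, where $\Omega = \pi^*\mc{K}_{\mc{D}} = d\omega + [\omega,\omega]$ and $\omega$ is the connection form provided by Lemma~\ref{Lemma:connectionformonLieGroupoid}. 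The key structural inputs are: $\omega$ is a differential $1$-form on $[s^*E_G\rra E_G]$ (Lemma~\ref{Lemma:connectionformonLieGroupoid}); $d\omega$ is then a differential $2$-form there because $\widetilde{\mc{H}}$ is integrable (Lemma~\ref{pullbackofintegrableconnection} and Lemma~\ref{differentialisdifferentialform}); $[\omega,\omega]$ is a differential $2$-form (Lemma~\ref{liebracketisdifferentialformonLiegroupoid}); hence $\Omega$ is a differential $2$-form on $[s^*E_G\rra E_G]$.

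For part~(1), the point is that $f(\Omega)$ is built from $\Omega$ by the algebraic formula \eqref{Eq:2kform}, which at each point $a\in E_G$ only evaluates $\Omega(a)$ on pairs of tangent vectors and then applies the fixed multilinear map $f$. Since taking horizontal components commutes with this pointwise algebraic operation — more precisely, $\widehat{\widetilde{\mc{H}}}(\mu^*(f(\Omega)))$ is obtained by applying the same formula to $\widehat{\widetilde{\mc{H}}}(\mu^*\Omega)$, and likewise with $\mu$ replaced by ${\rm pr}_2$ — the identity $\widehat{\widetilde{\mc{H}}}(\mu^*\Omega) = \widehat{\widetilde{\mc{H}}}({\rm pr}_2^*\Omega)$ propagates to $\widehat{\widetilde{\mc{H}}}(\mu^*(f(\Omega))) = \widehat{\widetilde{\mc{H}}}({\rm pr}_2^*(f(\Omega)))$. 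This shows $f(\Omega)$ is a differential $2k$-form on $[s^*E_G\rra E_G]$, and Lemma~\ref{pullbackdifferentialformonLiegroupoid} then gives that $f(\mc{K}_{\mc{D}})$ is a differential $2k$-form on $\mb{X}$. Closedness of $f(\mc{K}_{\mc{D}})$ as an element of the complex $\Omega^\bullet(\mb{X},\mc{H})$ follows from the classical fact that $f(\Omega)$ is closed on $E_G$ together with Lemma~\ref{differentialisdifferentialform}.

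For part~(2), let $\mc{D}$ and $\mc{D}'$ be two connections on $(E_G\ra X_0,\,\mb{X})$ with connection forms $\omega,\omega'$, both differential $1$-forms on $[s^*E_G\rra E_G]$ by Lemma~\ref{Lemma:connectionformonLieGroupoid}. The classical transgression argument produces a $(2k-1)$-form $\widetilde\Phi$ on $X_0$ with $f(\mc{K}_{\mc{D}}) - f(\mc{K}_{\mc{D}'}) = d\widetilde\Phi$, built via the homotopy $\omega_t = (1-t)\omega' + t\omega$ and an explicit Chern–Simons type integral over $t\in[0,1]$ of an expression polynomial in $\omega_t$, $\omega-\omega'$, and the curvature $\Omega_t$ of $\omega_t$. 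Each $\omega_t$ is an affine combination of $\omega$ and $\omega'$, hence again a differential $1$-form on $[s^*E_G\rra E_G]$ since $\Omega^1([s^*E_G\rra E_G],\widetilde{\mc{H}})$ is a vector space; consequently $\Omega_t = d\omega_t + [\omega_t,\omega_t]$ and $\omega-\omega'$ are differential forms on $[s^*E_G\rra E_G]$ by the same lemmas used above, and so the pullback $\pi^*\widetilde\Phi$ — being a $t$-integral of pointwise-algebraic combinations of these — is a differential $(2k-1)$-form on $[s^*E_G\rra E_G]$; Lemma~\ref{pullbackdifferentialformonLiegroupoid} then makes $\widetilde\Phi$ a differential $(2k-1)$-form on $\mb{X}$. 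Therefore $[f(\mc{K}_{\mc{D}})] = [f(\mc{K}_{\mc{D}'})]$ in $H^{2k}_{\rm dR}(\mb{X},\mc{H})$, proving the independence of the connection. The main obstacle I anticipate is verifying carefully that horizontal projection commutes with all the pointwise multilinear-algebra operations appearing in \eqref{Eq:2kform} and in the transgression formula — i.e. that "being a differential form on the groupoid" is stable under the algebraic operations used to build characteristic forms — and that the $t$-integration does not spoil this; once that bookkeeping is in place, the rest is a formal consequence of the lemmas already established.
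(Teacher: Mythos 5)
Your proposal is correct and follows essentially the same route as the paper: part (2) is the paper's transgression argument almost verbatim (interpolate the connection forms, note that the Chern--Simons integrand is a pointwise-algebraic, $t$-integrated combination of differential forms on $[s^*E_G\rra E_G]$, and descend via Lemma~\ref{pullbackdifferentialformonLiegroupoid}). The only cosmetic difference is in part (1), where the paper applies $f$ downstairs to $\mc{K}_{\mc{D}}$ on $X_1$, invoking Lemma~\ref{CurvatureformIsDifferentialformOnLiegroupoid} directly, whereas you apply $f$ upstairs to $\Omega$ on $s^*E_G$ and then descend --- both versions hinge on the same key observation that the pointwise multilinear operation $f$ commutes with horizontal projection.
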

	
	\begin{proof} {\bf Part (1):} We will prove it for the case $k=1$. The general case is
		deduced following the same line of arguments.
		
		We will prove the following: if $f\,:\,\mf{g}\,\longrightarrow\, \mb{R}$ is a linear map satisfying
		the condition $f(x)\,=\,f({\rm Ad}_g x)$ for all $g\,\in\, G$ and all $x\,\in \,\mf{g}$,
		then $$\widehat {\mc{H}}(s^*(f(\mc{K}_{\mc{D}})))\,=\, 
		\widehat{\mc{H}}(t^*(f(\mc{K}_{\mc{D}})))\, .$$
		
		Take $\gamma\,\in\, X_1$ and $v_1,\,v_2\,\in\, T_\gamma X_1$. We have the following:
		\begin{align*}
			\biggl(\widehat {\mc{H}}(s^*(f(\mc{K}_{\mc{D}})))(\gamma)\biggr)(v_1,\,v_2)&\,=\,
			\biggl(s^*(f(\mc{K}_{\mc{D}}))(\gamma)\biggr)(P_{\mc{H}_\gamma} (v_1),\, P_{\mc{H}_{\gamma}}(v_2))\\
			&=\, \biggl(f(\mc{K}_{\mc{D}})(s(\gamma)\biggr)(s_{*,\gamma}(P_{\mc{H}_\gamma} (v_1)),
			\,s_{*,\gamma}(P_{\mc{H}_{\gamma}}(v_2)).
		\end{align*}
		Recall that by Lemma~\ref{CurvatureformIsDifferentialformOnLiegroupoid},		
		$\mc{K}_{\mc{D}}$ is a differential $2$-form on the Lie groupoid $\mb{X}$. Then for each $\gamma\,\in\, X_1$
		and $v_1,\,v_2\,\in\, T_{\gamma}X_1$, we have 
		\[\biggl(\mc{K}_{\mc{D}}(s(\gamma))\biggr)(s_{*,\gamma}(P_{\mc{H}_\gamma} (v_1)),s_{*,\gamma}(P_{\mc{H}_{\gamma}}(v_2))
		\,=\,\biggl(\mc{K}_{\mc{D}}(t(\gamma))\biggr)(t_{*,\gamma}(P_{\mc{H}_\gamma} (v_1)),t_{*,\gamma}(P_{\mc{H}_{\gamma}}(v_2)).\]
		Thus, for $\widehat {\mc{H}}(s^*(f(\mc{K}_{\mc{D}})))\,\in\, \Omega^2(X_1,\, \mf{g})$, 
		\begin{align*}
			\biggl(\widehat{\mc{H}}(s^*(f(\mc{K}_{\mc{D}})))(\gamma)\biggr)(v_1,\,v_2)&\,=\,
			f(\mc{K}_{\mc{D}}(s(\gamma))(s_{*,\gamma}(P_{\mc{H}_\gamma} (v_1)),\,s_{*,\gamma}(P_{\mc{H}_{\gamma}}(v_2)))\\
			&=\,f(\mc{K}_{\mc{D}}(t(\gamma))(t_{*,\gamma}(P_{\mc{H}_\gamma} (v_1)),t_{*,\gamma}(P_{\mc{H}_{\gamma}}(v_2)))\\
			&=\,\widehat {\mc{H}}(t^*(f(\mc{K}_{\mc{D}})))(\gamma)(v_1,v_2).
		\end{align*}
		Hence $f(\mc{K}_{\mc{D}})$ is a differential $2$-form on the Lie groupoid $\mb{X}$. 
		
		{\bf Part (2):} Let $\mc{D}$, $\mc{D}'$ be a pair of connections on the principal $G$-bundle
		$(E_G\ra X_0,\ \mb{X})$ and $\mc{K}_{\mc{D}},$ $\mc{K}_{\mc{D}'}$ the respective curvature $2$-forms on
		the Lie groupoid $\mb{X}$. Given an element $f\,\in\, \text{Sym}^{k}(\mf{g}^*)^G$, we prove that
		$[f(\mc{K}_{\mc{D}})]\,=\,[f(\mc{K}_{\mc{D}'})]\,\in\, {H}^{2k}(\mb{X},\mc{H})$, i.e., there
		is a $(2k-1)$-form $\widetilde{\Phi}$ on the Lie groupoid $\mb{X}$ such that
		$f(\mc{K}_{\mc{D}})-f(\mc{K}_{\mc{D}'})\,=\,d\widetilde{\Phi}$.
		
		Let $\omega$ and $\omega'$ be the associated connection $1$-forms for connections $\mc{D}$ and $\mc{D}'$ respectively.
		Let $\alpha\,=\,\omega-\omega'$. We define a family of connection $1$-forms $\alpha_t\,=\,\omega'+t\alpha$ for
		$0\,\leq\, t\,\leq\, 1$ on the principal $G$-bundle $(E_G\ra X_0,\,\mb{X})$.
		Let $\Omega_t$ denote the associated curvature $2$-forms. Consider the differential $(2k-1)$-form 
		\[\Phi\,=\,k\int_0^1f(\alpha_t,\Omega_t,\Omega_t,\cdots,\Omega_t)dt\] on $E_G$. According to the classical Chern-Weil
		theory (see \cite{Kob-Nomizu}), this $(2k-1)$-form $\Phi$ on $E_G$ uniquely projects
		to a $(2k-1)$-form $\widetilde{\Phi}$ on $X_0$ as $\Phi\,=\,\pi^*(\widetilde{\Phi})$,
		and then $f(\Omega)-f(\Omega')\,=\,d\Phi$. So, we have 
		\begin{align*}
			\pi^*(f(\mc{K}_{\mc{D}})-f(\mc{K}_{\mc{D}'}))&=\pi^*(d(\widetilde{\Phi})).
		\end{align*} 
		Since $\pi\,\colon\, E_G\,\longrightarrow\, X_0$ is a surjective submersion, this means
		that $f(\mc{K}_{\mc{D}})-f(\mc{K}_{\mc{D}'})\,=\,d\widetilde{\Phi}$. 
		
		In order to prove that $\widetilde{\Phi}$ is a differential form on the Lie groupoid $\mb{X}$, it suffices
		to show that $\Phi$ is a differential form on the Lie groupoid $[s^*E_G\rra E_G]$ (see
		Lemma \ref{pullbackdifferentialformonLiegroupoid}). Observe that 
		\begin{align*}
			{\rm pr}_2^*\Phi&=
			{\rm pr}_2^*\left(k\int_0^1f(\alpha_t,\Omega_t,\Omega_t,\cdots,\Omega_t)dt\right)\\
			&=\,\left(k\int_0^1f({\rm pr}_2^*(\alpha_t), {\rm pr}_2^*(\Omega_t),\cdots, {\rm pr}_2^*(\Omega_t))dt\right) \\
			& \rm {and}\\
			\widehat {\widetilde{\mc{H}}}({\rm pr}_2^*\Phi)&=
			k\int_0^1 f(\widehat {\widetilde{\mc{H}}}({\rm pr}_2^*\alpha_t),\cdots,\widehat {\widetilde{\mc{H}}}({\rm pr}_2^*(\Omega_t)))dt.
		\end{align*}
		On the other hand, the differential forms $\alpha_t,\, \Omega_t$ on the Lie groupoid $[s^*E_G\rra E_G]$
		satisfy $\widehat {{\widetilde {\mc{H}}}}({\rm pr}_2^*\alpha_t)\,=\,\widehat {{\widetilde {\mc{H}}}}(\mu^*\alpha_t)$
		and $\widehat {{\widetilde {\mc{H}}}}({\rm pr}_2^*(\Omega_t))\,=\,\widehat {{\widetilde {\mc{H}}}}
		(\mu^*(\Omega_t))$. Plugging these relations into the last equation, we conclude that
		\begin{align*}
			\widehat {\widetilde{\mc{H}}}({\rm pr}_2^*\Phi)\,=\,\widehat {\widetilde {\mc{H}}}(\mu^*\Phi).
		\end{align*}
		Thus, $\Phi$ is a differential $(2k-1)$-form on the Lie groupoid $[s^*E_G\rra E_G]$ and so is $\widetilde{\Phi}$.
	\end{proof}
	
	We summarize the discussion of this section as follows. Let $\mb{X}\,=\,[X_1\rra X_0]$ be a Lie groupoid with an
	integrable connection $\mc{H}$. Let $(E_G\ra X_0,\, \mb{X})$ be a principal $G$-bundle over $\mb{X}$. Assume
	that $(E_G\ra X_0,\, \mb{X})$ admits a connection. Then we have a well-defined Chern-Weil map 
	\begin{equation}\nonumber
		\begin{split}
			\text{Sym}(\mf{g}^*)^G\,\longrightarrow\, H_{\rm dR}^{*}(\mb{X}, \mc{H})\\
		\end{split}
	\end{equation}
	given by \eqref{Eq:ChernweilX1X0}
	and this map does not depend on the choice of the connection on the principal bundle
	$(E_G\ra X_0,\, \mb{X})$. Along with Proposition~\ref{prop:pullbackderham}, this allows us to construct
	characteristic classes for principal $G$-bundles over Lie groupoids with connections. 
	
	\begin{remark}
		In light of the discussions in Subsection \ref{SS:Diffe_forms_etale} and prior to
		Subsection \ref{pullbackconnection}, it is reasonable to expect that the constructions and results described
		in this section extend naturally to the Deligne-Mumford stacks. A detailed study of it
		has been carried out in \cite{BCKN1}.	
	\end{remark}
	
	\subsection{Characteristic classes}
	
	Let $\mb{X}\,=\,[X_1\rra X_0]$ be a Lie groupoid equipped with an integrable connection $\mc{H}\,\subset\, TX_1$, and
	let $(E_G\ra X_0,\,\mb{X})$ a principal $G$-bundle over $\mb{X}$. Let \begin{equation}\label{Eq:Chmap}
		\begin{split}
			{\rm Ch}_{E_G}\,\colon\,\rm{Sym}(\mf{g}^*)^G\,&\longrightarrow \,H^*_{\rm dR}(\mb{X},\,\mc{H}),\\
			f&\longmapsto\, [f(\mc{K}_{\mc{D}})]
		\end{split}
	\end{equation}
	be the map in Theorem \ref{f(K_D)isadifferentialform}. We call ${\rm Ch}_{E_G}(f)$ the
	\textit{characteristic class} of $f.$
	Let $$(\Phi, \,\phi)\,\colon\, \bigl(\mb{X},\, \mc{H}^{\mb{X}}\bigr)\,\longrightarrow\, \bigl(\mb{Y}\,=\,
	[Y_1\rra Y_0], \,\mc{H}^{\mb{Y}}\bigr)$$ be a morphism of Lie groupoids with connections, and let
	$(E_G\xra{\pi} Y_0,\,\mb{Y})$ be a principal $G$-bundle over the Lie groupoid $\mb{Y}$.
	We have seen in Proposition~\ref{Prop:pullbackconn} that the pullback of the principal $G$-bundle
	$E_G\,\xra{\pi} Y_0$ along $\phi$ is a principal $G$-bundle 
	$(\phi^{*}E_G\ra X_0,\,\mb{X})$ over the Lie groupoid $\mb{X}.$ Moreover, any connection $\mc{D}$ on
	$(E_G\xra{\pi} Y_0,\, \mb{Y})$ pulls back to a connection $\phi^*\mc{D}$ on $(\phi^*E_G\xra{\pi} X_0 ,\,
	\mb{X}).$ The associated curvature $2$-form $\mc{K}_{\mc{D}}\,\in\, \Omega^2(Y_0, \mf{g})$ on the
	Lie groupoid $\mb{Y}$ evidently pulls back to the curvature $\phi^*\mc{K}_{\mc{D}}$ of $\phi^*\mc{D}$ on the
	Lie groupoid $\mb{X}$, namely we have
	$$\phi^*\mc{K}_{\mc{D}}\,=\,\mc{K}_{\phi^*\mc{D}}.$$
	
	Now for any $f\,\in \,\rm{Sym}(\mf{g}^*)^G$ it is immediate from \eqref{Eq:2kform} that
	\begin{equation}\nonumber
		f(\mc{K}_{\phi^*\mc{D}})\,=\,\phi^*f(\mc{K}_{\mc{D}}).
	\end{equation}
	Then using Proposition~\ref{prop:pullbackderham} it follows that $[f(\mc{K}_{\phi^*{\mc D}})]
	\,=\,[\phi^*f(\mc{K}_{\mc{D}})]\,=\,\phi^{*}[f(\mc{K}_{\mc{D}})].$
	We arrive at the following `naturality' condition of the Chern-Weil map for principal $G$-bundles over Lie groupoids with integrable connections.
	
	\begin{proposition}
		Let $(\Phi,\, \phi)\,\colon\, \bigl(\mb{X}=[X_1\rra X_0], \,\mc{H}_{\mb{X}}\bigr)\,\longrightarrow\,
		\bigl(\mb{Y}\,=\,[Y_1\rra Y_0], \, \mc{H}_{\mb{Y}}\bigr)$ be a morphism of Lie groupoids with integrable connections, and
		let $(E_G\xra{\pi} Y_0,\, \mb{Y})$ be a principal $G$-bundle over the Lie groupoid $\mb{Y}$. Let
		$(\phi^{*}E_G\ra X_0,\, \mb{X})$ be the pullback principal $G$-bundle over the Lie groupoid $\mb{X}.$ Then,
		\begin{equation}\label{Eq:Chmapnatural}
			\begin{split}
				{\rm Ch}_{\phi^*E_G}\,=\, \phi^* \circ {\rm Ch}_{E_G}\, ,
			\end{split}
		\end{equation}
		where $\phi^*\colon H^*_{\rm dR}(\mb{Y},\,\mc{H})\,\longrightarrow\, H^*_{\rm dR}(\mb{X},\,\mc{H})$ is the
		algebra homomorphism in Proposition~\ref{prop:pullbackderham}.
	\end{proposition}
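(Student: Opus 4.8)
The plan is to derive the identity \eqref{Eq:Chmapnatural} of algebra homomorphisms from a single pointwise statement: the Weil form of the pulled-back curvature equals the pullback of the Weil form of the curvature. First I would fix a connection $\mc{D}$ on $(E_G\xra{\pi} Y_0,\,\mb{Y})$ --- this presupposes such a connection exists, which is exactly the hypothesis under which ${\rm Ch}_{E_G}$ is defined --- and let $\omega\in\Omega^1(E_G,\,\mf{g})$ be the associated connection $1$-form on the Lie groupoid $[s^*E_G\rra E_G]$ (Lemma~\ref{Lemma:connectionformonLieGroupoid}). By Proposition~\ref{Prop:pullbackconn}, the pulled-back form ${\rm pr}_2^*\omega$ is a connection $1$-form on the pullback bundle $(\phi^*E_G\xra{{\rm pr}_1} X_0,\,\mb{X})$, where $\phi^*E_G=X_0\times_{Y_0}E_G$; denote by $\phi^*\mc{D}$ the corresponding pullback connection of Subsection~\ref{pullbackconnection}.

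Next I would establish the naturality of curvature, $\mc{K}_{\phi^*\mc{D}}=\phi^*\mc{K}_{\mc{D}}$, as $\mf{g}$-valued $2$-forms on $X_0$. Applying ${\rm pr}_2^*$ to the Maurer--Cartan formula $\pi^*\mc{K}_{\mc{D}}=d\omega+[\omega,\,\omega]$, using that $d$ and the bracket \eqref{Equation:Liebracketofdifferentialforms} commute with pullback and that $\pi\circ{\rm pr}_2=\phi\circ{\rm pr}_1$ on $X_0\times_{Y_0}E_G$, one obtains
\begin{align*}
{\rm pr}_1^*(\phi^*\mc{K}_{\mc{D}}) &\,=\, {\rm pr}_2^*(\pi^*\mc{K}_{\mc{D}})\,=\,{\rm pr}_2^*\bigl(d\omega+[\omega,\,\omega]\bigr)\\
&\,=\, d({\rm pr}_2^*\omega)+[{\rm pr}_2^*\omega,\,{\rm pr}_2^*\omega]\,=\,{\rm pr}_1^*\mc{K}_{\phi^*\mc{D}}\,,
\end{align*}
the last equality being the Maurer--Cartan formula for the connection $\phi^*\mc{D}$ with connection $1$-form ${\rm pr}_2^*\omega$. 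Since ${\rm pr}_1\colon\phi^*E_G\to X_0$ is a surjective submersion, this forces $\mc{K}_{\phi^*\mc{D}}=\phi^*\mc{K}_{\mc{D}}$; by Lemma~\ref{CurvatureformIsDifferentialformOnLiegroupoid} these are genuine differential $2$-forms on $\mb{X}$ and $\mb{Y}$ respectively.

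Then, for any homogeneous $f\in{\rm Sym}^k(\mf{g}^*)^G$, formula \eqref{Eq:2kform} expresses $f(\mc{K}_{\phi^*\mc{D}})$ pointwise and $\mb{R}$-multilinearly from the values of $\mc{K}_{\phi^*\mc{D}}$, and since pullback of forms commutes with such algebraic operations one gets $f(\mc{K}_{\phi^*\mc{D}})=f(\phi^*\mc{K}_{\mc{D}})=\phi^*\bigl(f(\mc{K}_{\mc{D}})\bigr)$ in $\Omega^{2k}(X_0)$. Finally I would pass to cohomology: by Theorem~\ref{f(K_D)isadifferentialform}(1) the forms $f(\mc{K}_{\mc{D}})$ and $f(\mc{K}_{\phi^*\mc{D}})$ are closed differential forms on $\mb{Y}$ and $\mb{X}$, and Proposition~\ref{prop:pullbackderham} sends $[f(\mc{K}_{\mc{D}})]\in H^{2k}_{\rm dR}(\mb{Y},\,\mc{H}_\mb{Y})$ to $[\phi^*f(\mc{K}_{\mc{D}})]\in H^{2k}_{\rm dR}(\mb{X},\,\mc{H}_\mb{X})$. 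Hence ${\rm Ch}_{\phi^*E_G}(f)=[f(\mc{K}_{\phi^*\mc{D}})]=[\phi^*f(\mc{K}_{\mc{D}})]=\phi^*\bigl({\rm Ch}_{E_G}(f)\bigr)$ for every homogeneous $f$, and extending linearly over ${\rm Sym}(\mf{g}^*)^G=\bigoplus_{k\ge 0}{\rm Sym}^k(\mf{g}^*)^G$ yields \eqref{Eq:Chmapnatural}. There is no substantial obstacle; the one point requiring a little care is that the class computed from the pulled-back connection $\phi^*\mc{D}$ genuinely represents ${\rm Ch}_{\phi^*E_G}(f)$ independently of that choice, which is precisely part (2) of Theorem~\ref{f(K_D)isadifferentialform}.
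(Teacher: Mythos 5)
Your proposal is correct and follows essentially the same route as the paper: pull back the connection via Proposition~\ref{Prop:pullbackconn}, observe $\mc{K}_{\phi^*\mc{D}}=\phi^*\mc{K}_{\mc{D}}$, deduce $f(\mc{K}_{\phi^*\mc{D}})=\phi^*f(\mc{K}_{\mc{D}})$ from \eqref{Eq:2kform}, and pass to cohomology with Proposition~\ref{prop:pullbackderham}. The only difference is that you spell out the Maurer--Cartan/surjective-submersion justification of the curvature naturality and the appeal to connection-independence, both of which the paper treats as evident.
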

	
	Let $(E\ra X_0,\,\mb{X})$ be a rank $r$ vector bundle over the Lie groupoid $\mb{X}$ (see 
	Definition~\ref{Definition:vectorbundleoverLiegroupoid}). Consider the underlying vector bundle $E\xra{\pi} X_0$ 
	on $X_0.$ Let $${\rm Fr}(E)=\bigsqcup_{x\in X_0}{\rm Iso}({\mathbb F}^k\ra E_x),$$ where $E_x\,:=\,\pi^{-1}(x)$ is 
	the fiber over $x\,\in\, X_0,$ $\mathbb F$ is the field of complex or real numbers and ${\rm Iso}({\mathbb 
		F}^r\ra E_x)$ is the set of linear isomorphisms. The right action of ${\rm GL}(r, {\mathbb F})$ on ${\rm Fr}(E)$ 
	given by $(x, \,\sigma)\cdot g \,=\,(x,\, \sigma\circ g)$ defines a (right) principal ${\rm GL}(r, \mathbb F)$-bundle 
	${\rm Fr}(E)\,\longrightarrow \, X_0,\ (x,\, \sigma)\,\longmapsto\, x$, called the \textit{frame bundle}. Now since
	$(E\ra X_0,\, \mb{X})$ is a vector bundle over the Lie groupoid $\mb{X},$ we get a left action
	$\mu\,\colon\, X_1\times_{\pi} E\,\longrightarrow\, E$ 
	of $\mb{X}$ on $E$ such that the restriction for each $\gamma\in X_1$ defines a linear map $\mu_{\gamma}\,\colon \,
	E_{s(\gamma)}\,\longrightarrow\, E_{t(\gamma)}.$ This induces a left action $X_1\times_{{\pi}_{\rm Fr}}{\rm Fr}(E)
	\,\longrightarrow\, {\rm Fr}(E)$ 
	by $\gamma\cdot (x,\, \sigma)
	\,=\,(y,\,\mu_{\gamma}\circ\sigma),$ for $x\xra{\gamma} y.$ The compatibility condition 
	$\gamma\cdot( (x,\, \sigma)\cdot g)\,=\,(\gamma\cdot (x,\, \sigma))\cdot g$ is immediate. That is to say, the
	frame bundle ${\rm Fr}(E)\,\longmapsto \, X_0$ is in fact a principal ${\rm GL}(r, {\mathbb F})$-bundle over the
	Lie groupoid $\mb{X}$. Following our notation, we write $$({\rm Fr}(E)\ra X_0,\, \mb{X}).$$
	For a given rank $r$-vector bundle $(E\ra X_0,\, \mb{X})$ over the Lie groupoid $\mb{X}$, we define
	the {\em Chern-Weil homomorphism} as
	\begin{equation}\label{Eq:Chmapvect}
		\begin{split}
			{\rm Ch}_{E}\,:=\,{\rm Ch}_{{\rm Fr}(E)}.
		\end{split}
	\end{equation}
	Now given an element $\mathfrak{A}$ of the Lie algebra $\mathfrak{gl}(r, \mathbb{F})$ of ${\rm GL}(r, \mb{F})$, we
	find the coefficients $c_i(\mathfrak{A})$ of the characteristic polynomial of $\mathfrak{A}$ from the following expansion
	$${\rm Det}(\mathfrak{A}+t I)\,=\,\sum_{i=0}^kc_i(\mathfrak{A})t^{n-k}\, .$$
	Each $c_i\,\colon\, \mathfrak{gl}(r, \mathbb{F})^*\,=\,
	\mathfrak{gl}(r, \mathbb{F})
	\,\longrightarrow \, {\mathbb R}$ is in fact a degree $i$ homogeneous polynomial invariant under the adjoint action
	of ${\rm GL}(r, \mb{F}).$ Thus $c_i$ can be identified with an ${\rm Ad}({\rm GL}(r, \mb{F}))$-invariant, multilinear,
	symmetric map (see \cite{Kob-Nomizu})
	\[{\overline{c}_i}\,:\,\text{Sym}^i(\mathfrak{gl}(r, \mathbb{F})^*)^G\,=\, \text{Sym}^i(\mathfrak{gl}(r, \mathbb{F}))^G
	\,\longrightarrow \, {\mathbb R}\, .\]
	Note that the bilinear form $(A,\, B)\, \longrightarrow\, \text{trace}(AB)$ on
	$\mathfrak{gl}(r, \mathbb{F})$ identifies the dual $\mathfrak{gl}(r, \mathbb{F})^*$ with
	$\mathfrak{gl}(r, \mathbb{F})$.
	Then the various characteristic classes of a vector bundle $(E\ra X_0,\, \mb{X})$ are given as the images of
	the classes ${\overline{c}_i}$ under the homomorphism ${\rm Ch}_{{\rm Fr}(E)}$.
	
	As an application of the construction in this section we consider the following example. 
	
	\begin{example}\label{EX:Chern-Weil}
Let $G,\, H$ be a pair of Lie groups. Recall that in Example \ref{Example:GHequivariantconnection}, we have seen 
		an $H$-equivariant smooth principal $G$-bundle $P\,\longrightarrow\,
		M$ defines a principal $G$-bundle over the action Lie groupoid $[M\times H\rra M],$ and an $H$-invariant connection $1$-form $\omega$ 	on the principal $G$-bundle $P\,\longrightarrow\,
		M$ defines a connection on the $G$-bundle $P\ra M$ over the Lie groupoid $[M\times H\rra M].$ The associated 	curvature form $\mc{K}_{\mc{D}}$	is $H$-invariant, and thus for any $f\in \rm{Sym}(\mf{g}^*)^G,$ 
		$f(\mc{K}_{\mc{D}})$ is an $H$-invariant closed form in $\Omega^*([M\times H\rra M],\,\mc{H}).$		
		Now applying Theorem~\ref{f(K_D)isadifferentialform}	
		we see that the Chern-Weil map 
		\begin{equation} 
		\begin{split}
			{\rm Ch}_{P}\,\colon\,\rm{Sym}(\mf{g}^*)^G\,&\longrightarrow \,H^*_{\rm dR}([M\times H\rra M],\,\mc{H}),\\
			f&\longmapsto\, [f(\mc{K}_{\mc{D}})]
		\end{split}
	\end{equation}		
	takes values in the subring of $H^*_{\rm dR}([M\times H\rra M],\,\mc{H})$ defined by the equivalence classes of $H$-invariant closed forms. Note this subring is not same as the $H$-equivariant de Rham cohomology.

	\end{example}

	In this article we have studied the Chern-Weil theory of principal $G$-bundles over a Lie groupoid $\mb{X}$ (with integrable connection $\mc{H}$). A natural direction of enquiry would be to study the behaviour of the constructions described here under Morita equivalences and, therefore, whether these structures can be further extended to more general differentiable stacks, which are not Deligne-Mumford stacks or orbifolds. While a conclusive answer to this question for the most general case still seems to be elusive, in \cite{BCKN1} we have discussed the problem in certain important cases of interest.
	
	\section*{Acknowledgements}
	
The first author acknowledges the support of a J. C. Bose Fellowship. The second author acknowledges research 
support from SERB, DST, Government of India grant MTR/2018/000528. The fourth author would like to thank the 
Tata Institute of Fundamental Research (TIFR) in Mumbai for financial support and a great hospitality.

\end{document}